
\documentclass[a4paper,reqno,11pt]{amsart}
\usepackage{amsfonts}

\usepackage{geometry} 
\geometry{letterpaper}  
\usepackage{amsmath,amsthm,amssymb}                
\usepackage{graphicx, color}
\usepackage{amssymb}
\usepackage{epstopdf}
\usepackage{pdfsync}
\usepackage{mathabx}

\newtheorem{definition}{Definition}[section]
\newtheorem{lemma}[definition]{Lemma}
\newtheorem{theorem}[definition]{Theorem}
\newtheorem{proposition}[definition]{Proposition}
\newtheorem{corollary}[definition]{Corollary}
\newtheorem{remark}[definition]{Remark}

\numberwithin{equation}{section}

\def\e{\varepsilon}

\newcommand{\weak}{\rightharpoonup}
\newcommand{\weakstar}{\buildrel * \over \rightharpoonup}
\def\R{\mathbb R}
\def\Q{\mathbb Q}
\def\N{\mathbb N}
\def\HH{\mathcal H}\def\AA{\mathcal A}\def\DD{\mathcal D}
\def\tf{\widetilde f}

\newcommand{\LM}[2]{\hbox{\vrule width.4pt \vbox to#1pt{\vfill
\hrule width#2pt height.4pt}}}
\newcommand{\LLL}{{\mathchoice {\>\LM{7}{5}\>}{\>\LM{7}{5}\>}{\,\LM{5}{3.5}\,}{\,\LM{3.35}{2.5}\,}}}

\def\dxy{\,dx dy}
\def\dx{\,dx}
\def\dy{\,dy}

\title[Compactness for a class of integral functionals]{Compactness for a class of integral functionals\\
 with interacting local and non-local terms}
\author[A. Braides]{Andrea Braides}
\address[Andrea Braides]{SISSA, Via Bonomea 265, 34136 Trieste, Italy}
\email[Andrea Braides]{abraides@sissa.it}
\author[G. Dal Maso]{Gianni Dal Maso}
\address[Gianni Dal Maso]{SISSA, Via Bonomea 265, 34136 Trieste, Italy}
\email[Gianni Dal Maso]{dalmaso@sissa.it}

\thanks{Preprint SISSA 21/2022/MATE} 

\begin{document}

\maketitle

\begin{abstract} We prove a compactness result  with respect to $\Gamma$-convergence for a class of integral functionals which are expressed as a sum of a local and a non-local term. The main feature is that, under our hypotheses, the local part of the $\Gamma$-limit depends on the interaction between the local and non-local terms of the converging subsequence. The result is applied to concentration and homogenization problems.
\end{abstract}

\section{Introduction} In this paper we study sequences  of integral functionals of the form
\begin{equation}\label{effe-k-intro}
\Phi_k(u):=\int_{\Omega\times\Omega} f_k(x,y,u(x),u(y)) \dxy+\int_\Omega g_k(x,\nabla u(x))\,dx
\end{equation}
defined on $W^{1,p}(\Omega)$, where $\Omega$ is an open bounded subset of $\mathbb R^d$ with Lipschitz boundary and $1<p<\infty$. Our scope is to show that, under proper structure properties on $f_k$ and $g_k$, such sequences are precompact with respect to the $\Gamma$-convergence in the weak topology of $W^{1,p}(\Omega)$, and their $\Gamma$-limit can be written in an integral form as
\begin{equation}\label{effe-intro}
\Phi(u):=\int_{\Omega\times\Omega} f(x,y,u(x),u(y)) \dxy+\int_\Omega g(x,\nabla u(x))\,dx.
\end{equation}

Our hypotheses (see Section \ref{Hypotheses}) allow for concentration of $f_k$ close to the diagonal $\Delta=\{(x,y): x=y\}$ (see Remark \ref{concentration}). As a result of this, while the non-local term
\begin{equation}\label{non-f}
\int_{\Omega\times\Omega} f(x,y,u(x),u(y)) \dxy
\end{equation}
of the $\Gamma$-limit depends only on the sequence $\{f_k\}$, the local term
\begin{equation}\label{g}
\int_\Omega g(x,\nabla u(x))\,dx
\end{equation}
depends on the mutual interaction between the local part of $\Phi_k$ and its non-local part.
More precisely, the function $g$ is determined by the sequence $\{g_k\}$ and the
part of the sequence $\{f_k\}$ that is concentrating on the diagonal. 

Non-local functionals as in \eqref{non-f}, which appear naturally in the definition of fractional Sobolev spaces,  have been recently studied also in connection with variational models in peridynamics \cite{E,S}, image processing \cite{BN,GO}, and data analysis \cite{SOZ,ST}. Semicontinuity and relaxation problems with respect to the weak topology of $L^p(\Omega)$ have also been studied  e.g.~in \cite{BMC,KZ,MCT,P}.

Sequences of non-local functionals as in \eqref{non-f} (that is, of the form \eqref{effe-k-intro} with $g_k=0$) converging to local functionals (that is, of the form \eqref{g}) have been considered by various authors in different contexts. We refer to \cite{BBM} for their use in the characterization of Sobolev spaces by approximation (see also \cite{PV}), to \cite{MC,MD} for the analysis of limit problems in peridynamics as the horizon tends to zero, and to \cite{AABPT} for a theory of convolution-type energies.

In the present paper we consider general functionals $\Phi_k$ of the form \eqref{effe-k-intro}, and our hypotheses ensure that the local terms involving $g_k$ are always non-trivial and provide an equi-coerciveness property in the weak topology of $W^{1,p}(\Omega)$. This is the reason why we may study the $\Gamma$-limit in this topology. 

Contrary to many of the papers quoted above, we allow for the possibility that a non-local part is still present  in the limit; that is, our hypotheses are not forcing to have $f=0$ in $\Phi$. 

We focus on a general representation result as in \eqref{effe-intro} for the $\Gamma$-limit $\Phi$, even when no explicit formula for $g$ is available.
While there is an integral-representation theory for $\Gamma$-limits of  sequences of local functionals of the form
$$
\int_\Omega g_k(x,\nabla u(x))\,dx,
$$
 under general hypotheses this cannot be extended to functionals of the complete form \eqref{effe-k-intro}.
However in our case, we are able to separately examine the limit behaviour of the functionals `far from the diagonal' and `close to the diagonal'. 

The off-diagonal part is fully described in terms of the weak$^*$ limits of the sequences of integrands $\{f_k(\cdot,\cdot,s,t)\}$ (see Section \ref{off}). In this argument we use the fact that all integrals are with respect to the Lebesgue measure. For the analysis of a case when the non-local terms are of the form 
\begin{equation}\label{non-f-k}
\int_{\Omega\times\Omega} f_k(x,y,u(x),u(y)) d\mu_k(x,y),
\end{equation}
for suitable measures $\mu_k$ on $\Omega\times\Omega$, we refer to \cite{BDM}.

The analysis of the part concentrating on the diagonal is much more delicate. We introduce a parameter $\delta>0$ and examine the part of the $\Gamma$-limit concentrated on a $\delta$-neighbourhood of the diagonal (Section \ref{diag}). Using some technical estimates we prove that such part, up to a controlled small error as $\delta\to 0$, satisfies all hypotheses of the abstract integral-representation theorem for local functionals (see Section \ref{intrepsec}). 
Finally, these results lead to the desired integral representation \eqref{effe-intro} (see Theorem \ref{thglo}). 

Our abstract compactness result is applied to the study of two prototypical problems in which local and non-local parts interact. In both cases the limit is purely local. The first one (Section \ref{sepsca}) regards the interaction between relaxation in the local part and concentration on the diagonal in the non-local part  of convolution type. We characterize the limit energy density $g$ showing that concentrations and oscillations act at different scales.

 The second example (Section \ref{sechom}) regards the interaction between oscillation and concentration. We consider periodically oscillating energies both in the local and non-local part, with a period of the same scale as the concentration. In this case we have homogenization and the limit $g$ is characterized by an asymptotic homogenization formula in which both local and non-local terms are present. 

\section{Setting of the problem}	

\subsection{Notation}

In the following $d\ge1$ is the space dimension and $\Omega$ is a bounded open subset of $\mathbb R^d$ with Lipschitz boundary. The family of the open subsets of $\Omega$ is denoted by $\AA$. 
The {\em diagonal} of $\Omega\times \Omega$ is denoted by
$$\Delta:= \{(x,x): x\in \Omega\},$$
while
$$\Delta_\delta:= \{(x,y): x,y\in \Omega: |x-y|<\delta\}$$
denotes a $\delta$-neighbourhood of the diagonal.
The notation $\mathcal M_b(\Omega\times\Omega)$ stands for the space of bounded Radon measures on $\Omega\times\Omega$. The Lebesgue measure in $\mathbb R^k$ is denoted by $\mathcal L^k$ and the $m$-dimensional Hausdorff measure (in any space dimension) is denoted by $\HH^m$. The restriction of a measure $\mu$ to the set $A$ is denoted by $\mu\LLL A$, defined by $\mu\LLL A(B)=\mu(B\cap A)$ for any Borel set $B$.

The characteristic function of the set $A$ is denoted  by ${\bf 1}_A$.
If $A'\subset\subset A$, a cut-off function between $A'$ and $A$ is a function $\varphi\in C^\infty_c(\R^d)$ such that $\varphi=1$ on $A'$, $\varphi=0$ on $\R^d\setminus A$ and $0\le \varphi\le 1$.

\subsection{Hypotheses}\label{Hypotheses}
We fix exponents  $1<p\le q<+\infty$ and let $p',q'$ denote the corresponding dual exponents.
If $p<d$ then we also assume that $q< p^*$, where $p^*$ is the Sobolev exponent of $p$.

We consider sequences of  integrals of the type 
\begin{equation}\label{effe-k}
\Phi_k(u):=\int_{\Omega\times\Omega} f_k(x,y,u(x),u(y)) \dxy+\int_\Omega g_k(x,\nabla u(x))\,dx
\end{equation}
defined on $W^{1,p}(\Omega)$.

\bigskip
\subsubsection{Hypotheses on $f_k$}

\smallskip
The functions $f_k\colon\Omega\times\Omega\times \R\times\R\to [0,+\infty)$ are Borel functions satisfying the following conditions.

\bigskip\noindent
1. {\em Behaviour far from the diagonal $\Delta$}.

\nobreak\noindent
For all $\delta>0$ there exist $k_\delta\in\mathbb N$, $c_\delta>0$, $\alpha_\delta\in L^1(\Omega)$, $\beta_\delta\in L^{q'}(\Omega)$ such that
\begin{eqnarray}
&0\le f_k(x,y,s,t)\le c_\delta (\alpha_\delta(x)+|s|^q)(\alpha_\delta(y)+|t|^q),\label{fkgrowth}\\
&|f_k(x,y,s_1,t)-f_k(x,y,s_2,t)|\le c_\delta(\alpha_\delta(y)+|t|^q)\bigl(\beta_\delta(x)+(|s_1|\vee|s_2|)^{q-1}\bigr)|s_1-s_2|,\label{fkconts}\qquad\\
&|f_k(x,y,s,t_1)-f_k(x,y,s,t_2)|\le c_\delta(\alpha_\delta(x)+|s|^q)\bigl(\beta_\delta(y)+(|t_1|\vee|t_2|)^{q-1}\bigr)|t_1-t_2|,\qquad\label{fkcontt}
\end{eqnarray}
 for all $k\ge k_\delta$, $x,y\in\Omega$ with $|x-y|\ge\delta$, $s,t,s_1,s_2, t_1,t_2\in \R$.

\bigskip\noindent
2. {\em Behaviour close to the diagonal $\Delta$}.

\noindent There exist Borel functions $\tf_k\colon\Omega\times\Omega\times \R\to [0,+\infty)$ and $b_k\in L^1(\Omega\times\Omega)$ such that there exist a constant $ c_\Delta$ and $\alpha\in L^1(\Omega)$ with $\alpha\ge 0$ such that 
\begin{eqnarray} &
\tf_k(x,y,\cdot) \hbox{ is convex, }\label{convextf}
\\ & \displaystyle
{1\over 2c_\Delta} \tf_k(x,y,2(s-t))\le f_k(x,y,s,t)\le \tf_k(x,y,s-t)+ c_\Delta(\alpha(x)+|s|^q)(\alpha(y)+|t|^q),\quad\label{doubleboundtf}
\\ &  \displaystyle
\tf_k(x,y,\tau)\le {b_k(x,y)\over |x-y|^p} |\tau|^p,\label{growthbk}
\end{eqnarray}
for all $k\in\N$, $x,y\in\Omega$, and $s,t,\tau\in \R$. Moreover, we assume the technical hypothesis that there exists a constant $c_b>0$ such that
\begin{equation}\label{cibi}
\int_\Omega b_k(x,y)\dx\le c_b
\end{equation}
for all $y\in\Omega$ and for all $k$. We will see that this requirement can be relaxed to an integrability assumption on the function $y\mapsto \int_\Omega b_k(x,y)\dx$. These properties can be required symmetrically to the function $x\mapsto \int_\Omega b_k(x,y)\dy$.

We suppose that there exist $b_\Delta\in L^\infty(\Delta,\HH^d\LLL\Delta)$ and $b\in L^\infty(\Omega\times\Omega)$ such that
\begin{equation}\label{weakconvbk}
b_k {\mathcal L}^{2d}\weakstar b_\Delta \HH^d\LLL\Delta + b {\mathcal L}^{2d}
\end{equation}
weakly$^*$ in $\mathcal M_b(\Omega\times\Omega)$. 
\bigskip

\begin{remark}[concentration]\label{concentration}\rm Hypotheses \eqref{doubleboundtf}--\eqref{cibi} allow for concentration of $f_k$ on the diagonal in a controlled way, which may occur when $b_\Delta\neq0$. The prototypical example for concentration is when 
$$
f_k(x,y,s,t)= {b_k(x,y)\over |x-y|^p} |s-t|^p, \hbox{ and } b_k(x,y)= k^d\psi(k(x-y)),
$$
where $\psi\in L^1(\mathbb R^d)$ with $\psi\ge 0$. We refer to Section \ref{sepsca} for the treatment of this particular case for a specific choice of the local term.
\end{remark}

Finally, in order to obtain that the local part of the functionals in the limit depend only on the gradient, we will also require an asymptotic invariance by addition of a constant close to the diagonal as follows.

\bigskip\goodbreak\noindent
3. {\em Asymptotic invariance by addition of constants}.

\noindent
For every $r\in\mathbb R$ there exist $\alpha_r\in L^1(\Omega\times\Omega)$ with $\alpha_r\ge 0$ and a modulus of continuity $\omega_r:[0,+\infty)\to[0,+\infty)$ such that
\begin{eqnarray}\label{invariance}\nonumber
&\bigl|f_k(x,y,s+r,t+r)-f_k(x,y,s,t)\bigr|\le \alpha_r(x,y)+ \omega_r(|x-y|)f_k(x,y,s,t)\\
&+ c_\Delta (\alpha(x)+|s|^q)(\alpha(y)+|t|^q)
\end{eqnarray}
for all $k\in\mathbb N$, $x,y\in\Omega$ and $s,t\in\mathbb R$.

Note that this condition is trivially satisfied if $f_k(x,y,s,t)=f_k(x,y,s-t)$. 


\bigskip
\subsubsection{Hypotheses on $g_k$}

\smallskip
The functions $g_k\colon\Omega\times \R^d\to [0,+\infty)$ will be Borel functions satisfying the following condition: there exist constants $c_0,c_1>0$ and $a\in L^1(\Omega)$ such that
\begin{equation}\label{usual}
c_0|\xi|^p\le g_k(x,\xi)\le c_1|\xi|^p+ a(x)
\end{equation}
for all $k\in\N$, $x\in \Omega$, $\xi\in \R^d$.

\subsection{Localized functionals}\label{localiz}
In order to study the $\Gamma$-limit of functionals $\Phi_k$ we introduce a separate notation for the non-local and local parts of the functionals. Furthermore, in order to apply representation techniques for $\Gamma$-limits we consider the restriction of these functionals to open subsets.

\bigskip
As for the nonlocal part, for all $A,B\in\mathcal A$ and $u\in L^q(\Omega)$ we define
\begin{eqnarray}\label{effek}
&\displaystyle F_k(u,A,B):= \int_{A\times B} f_k(x,y,u(x),u(y)) \dxy.
\end{eqnarray}
Furthermore, in order to separately analyze the contributions of the nonlocal part close and far from the diagonal, we set
\begin{eqnarray}\label{effekdelta}
&\displaystyle F^\delta_k(u,A,B):= \int_{(A\times B)\cap \Delta_\delta} f_k(x,y,u(x),u(y)) \dxy,\\ \label{effecekkdelta}
&\displaystyle \widecheck F^\delta_k(u,A,B):= \int_{(A\times B)\setminus\Delta_\delta} f_k(x,y,u(x),u(y)) \dxy.
\end{eqnarray}
Note that 
\begin{equation}\label{trivfk}
F_k(u,A,B)=F^\delta_k(u,A,B)+\widecheck F^\delta_k(u,A,B)
\end{equation}
by definition.

\smallskip
For all $A\in\mathcal A$ and $u\in W^{1,p}(\Omega)$ we set
\begin{eqnarray}\label{defGk}
G_k(u,A):= \int_{A} g_k(x,\nabla u(x))\dx.
\end{eqnarray}

\smallskip We finally extend this notation to the complete functionals as follows. 
For all $A,B\in\mathcal A$ and $u\in W^{1,p}(\Omega)$ we define
\begin{eqnarray}\label{ficappa}
\Phi_k(u,A,B):= F_k(u,A,B)+ G_k(u, A\cap B),\\
\Phi^\delta_k(u,A,B):=F^\delta_k(u,A,B)+ G_k(u, A\cap B).\label{ficappadelta}
\end{eqnarray}
Note that 
\begin{equation}\label{triug}
\Phi_k(u,A,B)= \widecheck F^\delta_k(u,A,B)+\Phi^\delta_k(u,A,B)
\end{equation}
by \eqref{trivfk}.

\bigskip
In order to determine a subsequence on which the $\Gamma$-limit exists,
we now fix  a countable dense subset $\DD$ of $\AA$; that is such that if $A,B\in\AA$ and $A\subset\subset B$ then there exists $D\in\DD$ with $A\subset\subset D \subset\subset B$. By the compactness of $\Gamma$-convergence there exists a subsequence of indices $k$, not relabelled, and a functional $\Phi\colon W^{1,p}(\Omega)\times \DD \times \DD \to[0,+\infty]$ such that for every $A,B\in\DD$ we have 
\begin{equation}\label{gammaD}
\Phi_k(\cdot,A,B)\hbox{ $\Gamma$-converges to }\Phi(\cdot,A,B)
\end{equation}
with respect to the weak topology of $W^{1,p}(\Omega)$. In the rest of the paper we consider this fixed subsequence, from which we will extract further subsequences.

We furthermore define
 \begin{eqnarray}\label{Phim}
 \Phi'(\cdot,A,B)=\Gamma\hbox{-}\liminf_{k\to+\infty} \Phi_k(\cdot,A,B),\\ \label{Phip}
  \Phi''(\cdot,A,B)=\Gamma\hbox{-}\limsup_{k\to+\infty} \Phi_k(\cdot,A,B)
  \end{eqnarray}
  for all $A,B\in\mathcal A$. Note that $ \Phi'(\cdot,A,B)= \Phi''(\cdot,A,B)$ for all $A,B\in\DD$ by \eqref{gammaD}.

\section{Off-diagonal behaviour}\label{off}
We now prove a compactness result for functionals $\widecheck F^\delta_k(u,A,B)$ for the $\Gamma$-convergence with respect to the strong $L^q$-convergence.
We begin with a compactness result for the integrands $f_k$.

\begin{lemma}\label{effelim} There exist a subsequence of $f_k$, not relabelled, and a Borel function $f\colon\Omega\times\Omega\times\R\times\R\to [0,+\infty)$ satisfying \eqref{fkgrowth}--\eqref{fkcontt}, such that for every $\delta>0$ we have
\begin{equation}\label{weakstarfk}
f_k(\cdot,\cdot,s,t)\weak f(\cdot,\cdot,s,t)
\end{equation}
weakly in $L^1((\Omega\times\Omega)\setminus \Delta_\delta)$ for every $s,t\in\R$.\end{lemma}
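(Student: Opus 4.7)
My strategy is a diagonal extraction followed by a Lipschitz extension from a countable dense set of parameters. First I observe that, for any fixed $\delta>0$ and $(s,t)\in\R^2$, the growth bound \eqref{fkgrowth} provides, for $k\ge k_\delta$, a pointwise $L^1(\Omega\times\Omega)$ majorant of $f_k(\cdot,\cdot,s,t)$ on $(\Omega\times\Omega)\setminus\Delta_\delta$, namely $c_\delta(\alpha_\delta(x)+|s|^q)(\alpha_\delta(y)+|t|^q)$. Domination by an $L^1$ function gives equi-integrability and boundedness, so the Dunford--Pettis theorem yields weak relative compactness in $L^1((\Omega\times\Omega)\setminus\Delta_\delta)$.

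Next, I would fix a sequence $\delta_n\downarrow 0$, enumerate $\Q^2=\{(s_j,t_j):j\in\N\}$, and run a standard Cantor diagonal argument to extract a single subsequence (not relabelled) such that $f_k(\cdot,\cdot,s_j,t_j)\weak \tilde f_j$ weakly in $L^1((\Omega\times\Omega)\setminus\Delta_{\delta_n})$ for every $j,n$; consistency on nested domains produces a single nonnegative $\tilde f_j$ on $(\Omega\times\Omega)\setminus\Delta$. Testing the pointwise inequalities \eqref{fkgrowth}--\eqref{fkcontt} against nonnegative bounded functions and passing to the weak-$L^1$ limit gives the same inequalities a.e.\ for the $\tilde f_j$. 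Since only countably many instances are needed (one per triple of rational parameters and per $\delta_n$), their exceptional sets can be unioned into a single $\mathcal L^{2d}$-null set $N\subset\Omega\times\Omega$ outside of which the assignment $(s_j,t_j)\mapsto \tilde f_j(x,y)$ is nonnegative and locally Lipschitz with the constants prescribed by \eqref{fkconts}--\eqref{fkcontt}.

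I would then extend this assignment by continuity from $\Q^2$ to $\R^2$ for every $(x,y)\notin N$ and set $f\equiv 0$ on $N\times\R^2$; Borel measurability of the resulting $f$ follows by writing it as the pointwise limit of the Borel sampling functions $(x,y,s,t)\mapsto \tilde f_{j(s,t,n)}(x,y)$, where $(s_j,t_j)$ is chosen closest to $(s,t)$ at each rational scale $1/n$. The inequalities \eqref{fkgrowth}--\eqref{fkcontt} survive the continuous extension. To upgrade weak convergence to arbitrary $(s,t)\in\R^2$, fix $\delta>0$ and $\eta>0$, and choose $(s',t')\in\Q^2$ so close to $(s,t)$ that, by \eqref{fkconts}--\eqref{fkcontt} applied uniformly in $k\ge k_\delta$, $\|f_k(\cdot,\cdot,s,t)-f_k(\cdot,\cdot,s',t')\|_{L^1((\Omega\times\Omega)\setminus\Delta_\delta)}<\eta$; the analogous bound holds for $f$, so a routine $3\eta$-argument closes the step.

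The main obstacle will be the bookkeeping at the passage from $\Q^2$ to $\R^2$: one must produce a \emph{single} Borel representative of $f$ on which the three structural inequalities hold pointwise, and then ensure the weak-$L^1$ convergence extends to every real $(s,t)$ and every $\delta>0$. The Dunford--Pettis compactness and the diagonal extraction are routine; what takes care is unioning countably many null sets and exploiting the Lipschitz estimates \eqref{fkconts}--\eqref{fkcontt} to interpolate between rational and real parameters uniformly in $k$ and in the limit.
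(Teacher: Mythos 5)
Your proposal is correct and follows essentially the same route as the paper: domination by the $L^1$ majorant from \eqref{fkgrowth} outside $\Delta_\delta$, Dunford--Pettis (which the paper implicitly invokes via the domination argument and Fubini) and a diagonal extraction over $(s,t)\in\Q^2$ and a sequence $\delta_n\downarrow 0$, passage of the linear/growth inequalities to the weak $L^1$-limit, continuous extension from rational to real parameters exploiting the Lipschitz bounds \eqref{fkconts}--\eqref{fkcontt}, and a $3\eta$-approximation to propagate weak convergence to all real $(s,t)$. You spell out more carefully the null-set bookkeeping, the Borel measurability of the extension, and the uniformity in $k$ of the Lipschitz estimate used in the final approximation step; these are all points the paper handles more tersely, and your version is a valid elaboration rather than a different argument.
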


\begin{proof}
By \eqref{fkgrowth} and by Fubini's theorem, for every $s,t\in\R$ the functions $f_k(\cdot,\cdot,s,t)$ are bounded by a function in $L^1(\Omega\times\Omega)$ independent of $k$. Therefore, using a diagonal argument we can find a subsequence of $f_k$, not relabelled, and a Borel function $f\colon\Omega\times\Omega\times\Q\times\Q\to [0,+\infty)$ such that \eqref{weakstarfk} holds for every $\delta>0$ and for every $s,t\in\Q$. Passing to the limit in \eqref{fkgrowth}--\eqref{fkcontt} we have that $f$ satisfies the same properties for every $s,t\in\Q$. We can then extend $f(x,y,\cdot,\cdot)$ by continuity to $\R\times\R$ for all $x,y\in\Omega$ with $x\neq y$. After defining arbitrarily $f$ for $x=y$, we obtain $f\colon\Omega\times\Omega\times\R\times\R\to [0,+\infty)$ satisfying \eqref{fkgrowth}--\eqref{fkcontt}. The convergence in \eqref{weakstarfk} for every $s,t\in\R$ is obtained by approximation with $s,t\in\Q$.
\end{proof}

We define 
\begin{equation}\label{contconvcheck}
\widecheck F^\delta(u,A,B):= \int_{(A\times B)\setminus\Delta_\delta} f(x,y,u(x),u(y)) \dxy
\end{equation}
for $u\in L^q(\Omega)$ and $A,B\in\mathcal A$. Note that $u\mapsto \widecheck F^\delta(u,A,B)$ is continuous with respect to the strong convergence in $L^q(\Omega)$, since $f$ satisfies \eqref{fkgrowth}--\eqref{fkcontt}.

\begin{theorem}\label{contconvthm}
Assume that $f_k$ and $f$ satisfy \eqref{fkgrowth}--\eqref{fkcontt} and \eqref{weakstarfk}. Let $u_k,v_k, u, v\in L^q(\Omega)$ with $u_k\to u$ and $v_k\to v$ strongly in $L^q(\Omega)$. Then 
\begin{equation}\label{contconv}
\lim_{k\to+\infty}\int_{(A\times B)\setminus\Delta_\delta} f_k(x,y,u_k(x),v_k(y)) \dxy= \int_{(A\times B)\setminus\Delta_\delta} f(x,y,u(x),v(y)) \dxy
\end{equation}
for all $A,B\in\mathcal A$ and every $\delta>0$. In particular we have 
\begin{equation}\label{contconvcheck-1}
\lim_{k\to+\infty}\widecheck F^\delta_k(u_k,A,B)= \widecheck F^\delta(u,A,B).
\end{equation}
\end{theorem}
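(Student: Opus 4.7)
The plan is to reduce the general statement to the weak convergence hypothesis \eqref{weakstarfk} by a two-step approximation: first replace the moving arguments $(u_k,v_k)$ by the fixed limits $(u,v)$ using the Lipschitz-type bounds \eqref{fkconts}--\eqref{fkcontt}, then pass to the weak $L^1$ limit after a further approximation by simple functions. The final conclusion \eqref{contconvcheck-1} is then the special case $u_k=v_k$, $u=v$.

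For the first step, I would insert the telescoping identity
\begin{equation*}
f_k(x,y,u_k(x),v_k(y))-f_k(x,y,u(x),v(y))=\bigl[f_k(x,y,u_k(x),v_k(y))-f_k(x,y,u(x),v_k(y))\bigr]+\bigl[f_k(x,y,u(x),v_k(y))-f_k(x,y,u(x),v(y))\bigr]
\end{equation*}
and estimate each bracket by \eqref{fkconts} and \eqref{fkcontt} respectively. On the off-diagonal set $(A\times B)\setminus\Delta_\delta$ the hypothesis gives, for $k\ge k_\delta$, a product-type control. Using Fubini, the first bracket integrates to at most
\begin{equation*}
c_\delta\Bigl(\int_B(\alpha_\delta(y)+|v_k(y)|^q)\dy\Bigr)\Bigl(\int_A(\beta_\delta(x)+(|u_k(x)|\vee|u(x)|)^{q-1})|u_k(x)-u(x)|\dx\Bigr).
\end{equation*}
The first factor is bounded since $v_k\to v$ in $L^q$. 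For the second, Hölder with exponents $q'$ and $q$ applies because $\beta_\delta\in L^{q'}$ and $(q-1)q'=q$ makes $(|u_k|\vee|u|)^{q-1}$ bounded in $L^{q'}$; then $\|u_k-u\|_{L^q}\to 0$ closes this term. The other bracket is handled symmetrically. Hence it suffices to show \eqref{contconv} with $(u_k,v_k)$ replaced by $(u,v)$.

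For the second step, fix $\eta>0$ and approximate $u$, $v$ in $L^q(\Omega)$ by simple functions $u^\eta=\sum_{i=1}^N s_i\mathbf{1}_{A_i}$ and $v^\eta=\sum_{j=1}^M t_j\mathbf{1}_{B_j}$ with $\|u-u^\eta\|_{L^q},\|v-v^\eta\|_{L^q}<\eta$. For the simple functions,
\begin{equation*}
\int_{(A\times B)\setminus\Delta_\delta}f_k(x,y,u^\eta(x),v^\eta(y))\dxy=\sum_{i,j}\int_{(\Omega\times\Omega)\setminus\Delta_\delta}f_k(x,y,s_i,t_j)\,\mathbf{1}_{(A\cap A_i)\times(B\cap B_j)}(x,y)\dxy,
\end{equation*}
and since $\mathbf{1}_{(A\cap A_i)\times(B\cap B_j)}\in L^\infty$, the weak $L^1$ convergence \eqref{weakstarfk} yields as $k\to\infty$ the same expression with $f$ in place of $f_k$. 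Finally, the errors committed by replacing $(u,v)$ by $(u^\eta,v^\eta)$ in both the integrals with $f_k$ and with $f$ are again handled by \eqref{fkconts}--\eqref{fkcontt} (which pass to $f$ by Lemma \ref{effelim}) exactly as in the first step, and are bounded by $C\eta$ uniformly in $k$. Letting first $k\to\infty$ and then $\eta\to 0$ concludes the proof.

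The main obstacle is making the Hölder bounds on the error terms genuinely uniform in $k$: one must use that the constants $c_\delta$ and the functions $\alpha_\delta,\beta_\delta$ in \eqref{fkconts}--\eqref{fkcontt} are independent of $k$ (for $k\ge k_\delta$), and that the $L^q$-norms of $u_k,v_k$ are bounded along the convergent sequences. With these uniform controls in hand, the telescoping/approximation scheme propagates the weak $L^1$ information provided by \eqref{weakstarfk} to the nonlinear composition on the left-hand side of \eqref{contconv}.
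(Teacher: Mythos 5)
Your proposal is correct and follows essentially the same approach as the paper: approximate by simple functions, use the weak $L^1$ convergence \eqref{weakstarfk} on each rectangular piece, and control the approximation error via \eqref{fkconts}--\eqref{fkcontt} together with H\"older's inequality (exploiting $(q-1)q'=q$ and the boundedness of $\|u_k\|_{L^q},\|v_k\|_{L^q}$). The only difference is bookkeeping: you telescope in two stages (first $(u_k,v_k)\mapsto(u,v)$, then $(u,v)\mapsto$ simple functions), whereas the paper compares $(u_k,v_k)$ to the simple functions in a single estimate, but the underlying bounds and the final $\eta\to0$ argument are the same.
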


\begin{proof} Let $A,B\in\mathcal A$ be fixed, and let $\delta>0$.
 We first consider $\varphi,\psi$ simple functions, 
$$
\varphi=\sum_{i=1}^Ns_i {\bf 1}_{A_i} \quad\hbox{ and }\quad \psi=\sum_{j=1}^Mt_j {\bf 1}_{B_j},
$$
with $\{A_i\}$ and $\{B_j\}$ measurable partitions of $\Omega$ and $s_i,t_j\in\R$. 
We claim that 
\begin{equation}\label{int0}
\lim_{k\to+\infty}\int_{(A\times B)\setminus\Delta_\delta} f_k(x,y,\varphi(x),\psi(y)) \dxy= \int_{(A\times B)\setminus\Delta_\delta} f(x,y,\varphi(x),\psi(y)) \dxy.
\end{equation}

Indeed, setting $A'_i:=A_i\cap A$ and $B'_i:=B_i\cap B$ we have
$$
\int_{(A\times B)\setminus\Delta_\delta} f_k(x,y,\varphi(x),\psi(y)) \dxy=\sum_{i,j} \int_{(A'_i\times B'_j)\setminus\Delta_\delta} f_k(x,y,s_i,t_j) \dxy,
$$
and similarly for $f$. The claim follows, since $$\lim_{k\to+\infty}
 \int_{(A'_i\times B'_j)\setminus\Delta_\delta} f_k(x,y,s_i,t_j) \dxy= \int_{(A'_i\times B'_j)\setminus\Delta_\delta} f(x,y,s_i,t_j) \dxy
$$
by \eqref{weakstarfk}.

Now, in order to prove \eqref{contconv}, given $\e>0$ we fix two simple functions $\varphi,\psi$ such that
$$
\|u-\varphi\|_{L^q(\Omega)}\le \e \quad\hbox{ and }\quad \|v-\psi\|_{L^q(\Omega)}\le \e,
$$
and a constant $C$ such that
\begin{eqnarray*}
&\displaystyle c_\delta\int_{\Omega} (\alpha_\delta(y)+|v_k(y)|^q)\dy\le C, \qquad c_\delta\int_{\Omega} (\alpha_\delta(x)+|\varphi(x)|^q)\dx\le C,\\
& \displaystyle\Bigl(\int_\Omega \big(\beta_\delta(x)+(|\varphi(x)|\vee|u_k(x)|)^{q-1}\big)^{q\over q-1}\dx\Bigr)^{q-1\over q} \le C,\\
& \displaystyle\Bigl(\int_\Omega \big(\beta_\delta(y)+(|\psi(y)|\vee|v_k(y)|)^{q-1}\big)^{q\over q-1}\dy\Bigr)^{q-1\over q} \le C.
\end{eqnarray*}

By \eqref{fkconts} and  \eqref{fkcontt} we have 
\begin{eqnarray}\label{int1}\nonumber
&&\hskip-1cm\Bigl|\int_{(A\times B)\setminus\Delta_\delta} f_k(x,y,u_k(x),v_k(y)) \dxy- \int_{(A\times B)\setminus\Delta_\delta} f_k(x,y,\varphi(x),\psi(y)) \dxy\Bigr| \\ \nonumber
&&\hskip-1cm\le \int_{(A\times B)\setminus\Delta_\delta} \Bigl|f_k(x,y,u_k(x),v_k(y))- f_k(x,y,\varphi(x),v_k(y)) \Bigr|\dxy\\\nonumber
&& + \int_{(A\times B)\setminus\Delta_\delta} \Bigl|f_k(x,y,\varphi(x),v_k(y)) -f_k(x,y,\varphi(x),\psi(y))\Bigr| \dxy\\ \nonumber
&&\hskip-1cm\le c_\delta\int_{\Omega} (\alpha_\delta(y)+|v_k(y)|^q)\dy\int_\Omega \big(\beta_\delta(x)+(|\varphi(x)|\vee|u_k(x)|)^{q-1}\big)|u_k(x)-\varphi(x)|\dx\\ \nonumber
&&+ c_\delta\int_{\Omega} (\alpha_\delta(x)+|\varphi(x)|^q)\dx\int_\Omega \big(\beta_\delta(y)+(|\psi(y)|\vee|v_k(y)|)^{q-1}\big)|v_k(y)-\psi(y)|\dy\\
&&\hskip-1cm\le  C^2\big( \|u_k-\varphi\|_{L^q(\Omega)}+  \|v_k-\psi\|_{L^q(\Omega)}\big)\le C^2\big( \|u_k-u\|_{L^q(\Omega)}+  \|v_k-v\|_{L^q(\Omega)} +2\e\big)\,.
\end{eqnarray}
Similarly, we obtain
\begin{eqnarray}\label{int2}
\Bigl|\int_{(A\times B)\setminus\Delta_\delta} f(x,y,u(x),v(y)) \dxy- \int_{(A\times B)\setminus\Delta_\delta} f(x,y,\varphi(x),\psi(y)) \dxy\Bigr| \le C^22\e.
\end{eqnarray}
By \eqref{int0}--\eqref{int2} and the arbitrariness of $\e$, we obtain the claim \eqref{contconv}.
\end{proof}

\begin{remark}\label{rem-est-f}\rm Since
$$
0\le 
f_k(x,y,s,t)\le  {b_k(x,y)\over |x-y|^p} |z|^p+ c_\Delta(\alpha(x)+|s|^q)(\alpha(y)+|t|^q)
$$
by \eqref{doubleboundtf} and \eqref{growthbk}, the weak convergence in \eqref{weakconvbk} implies that
\begin{equation}\label{growth-f}
0\le 
f(x,y, s,t)\le  {b(x,y)\over |x-y|^p} |t-s|^p+ c_\Delta(\alpha(x)+|s|^q)(\alpha(y)+|t|^q)
\end{equation}
for almost all $x,y$ and all $s,t$.
\end{remark}

\begin{lemma} There exists $C>0$ such that if $A, B\in\AA$ and $0<\delta\le{\rm dist}(A\cup B, \mathbb R^d\setminus \Omega)$ then 
\begin{equation}\label{stima-incr}
\int_{\Delta_\delta\cap(A\times B)} {|u(x)-u(y)|^p\over|x-y|^p}\dxy\le C\delta^d\int_\Omega|\nabla u(x)|^p\dx
\end{equation}
for all $u\in W^{1,p}(\Omega)$. 
\end{lemma}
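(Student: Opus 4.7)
The plan is to reduce to smooth $u$ by density of $C^\infty(\overline\Omega)\cap W^{1,p}(\Omega)$ in $W^{1,p}(\Omega)$, and then use the fundamental theorem of calculus along line segments joining $y$ to $x$. Concretely, for smooth $u$ and any $x,y\in\Omega$ with $[y,x]\subset\Omega$, one has
$$
u(x)-u(y)=\int_0^1\nabla u\big((1-t)y+tx\big)\cdot(x-y)\,dt,
$$
so by Jensen's inequality applied to the probability measure $dt$ on $[0,1]$,
$$
\frac{|u(x)-u(y)|^p}{|x-y|^p}\le \int_0^1 \big|\nabla u\big((1-t)y+tx\big)\big|^p\,dt.
$$

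Next I would integrate this pointwise inequality over $\Delta_\delta\cap(A\times B)$, swap the order of integration via Fubini, and change variables. The cleanest way is to set $h=x-y$, so that $(x,y)\in\Delta_\delta\cap(A\times B)$ corresponds to $y\in B$, $y+h\in A$, and $|h|<\delta$. This yields
$$
\int_{\Delta_\delta\cap(A\times B)}\frac{|u(x)-u(y)|^p}{|x-y|^p}\,dxdy\le \int_0^1\!\!\int_{\{|h|<\delta\}}\!\!\int_{\{y\in B,\,y+h\in A\}}|\nabla u(y+th)|^p\,dy\,dh\,dt.
$$
For fixed $t\in[0,1]$ and $h$ with $|h|<\delta$, I would further change variables $z=y+th$ (unit Jacobian). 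Since $y\in B$ (or $y+h\in A$) and $|z-y|=t|h|<\delta\le \mathrm{dist}(A\cup B,\mathbb R^d\setminus\Omega)$, the point $z$ lies in $\Omega$. Hence the inner integral is bounded by $\int_\Omega|\nabla u(z)|^p\,dz$.

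Pulling the last bound out of the $t$ and $h$ integrals gives
$$
\int_{\Delta_\delta\cap(A\times B)}\frac{|u(x)-u(y)|^p}{|x-y|^p}\,dxdy\le \mathcal L^d(B_\delta)\int_\Omega|\nabla u|^p\,dz=\omega_d\,\delta^d\int_\Omega|\nabla u|^p\,dz,
$$
so one may take $C=\omega_d$, the volume of the unit ball in $\mathbb R^d$. I do not anticipate any serious obstacle: the only delicate point is checking that the change of variables keeps $z$ inside $\Omega$, which is exactly what the hypothesis $\delta\le\mathrm{dist}(A\cup B,\mathbb R^d\setminus\Omega)$ guarantees, and the passage from smooth $u$ to general $u\in W^{1,p}(\Omega)$ follows by a standard approximation combined with Fatou's lemma.
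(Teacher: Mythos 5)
Your proof is correct and follows the same strategy as the paper: reduce to smooth $u$, write $u(x)-u(y)$ as an integral of $\nabla u$ along the segment, apply Jensen, then Fubini and a change of variables to pull out $\int_\Omega|\nabla u|^p$. The only difference is the change of variables: you fix $h=x-y$ and translate $z=y+th$ (unit Jacobian), while the paper fixes one endpoint and rescales along the segment, which forces a split of the $t$-integral at $t=1/2$ to control the Jacobian factors $(1-t)^{-d}$ and $t^{-d}$; your version is a bit cleaner and yields the slightly better constant $C=\omega_d$ in place of the paper's $2^d\omega_d$.
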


\begin{proof}
By approximation it is sufficient to deal with the case $u\in C^1(\Omega)$. Then for every $(x,y)\in\Delta_\delta\cap(A\times B)$, since the segment between $x$ and $y$ lies entirely in $\Omega$, we may write
$$
u(x)-u(y)=\int_0^1\nabla u(x+t(y-x))\cdot (y-x)dt,
$$ which gives
$$
{|u(x)-u(y)|^p\over|x-y|^p}\le \int_0^1|\nabla u(x+t(y-x))|^pdt.
$$
Hence,
\begin{eqnarray*}
&&\int_{\Delta_\delta\cap(A\times B)} {|u(x)-u(y)|^p\over|x-y|^p}\dxy\le
\int_0^1\int_{\Delta_\delta\cap(A\times B)} |\nabla u(x+t(y-x))|^p\dxy\,dt\\
&=&\int_0^{1\over 2}\int_B\int_{B_\delta(y)} |\nabla u(x+t(y-x))|^p\dx\dy\,dt+ \int_{1\over 2}^1\int_A\int_{B_\delta(x)} |\nabla u(x+t(y-x))|^p\dy\dx\,dt.
\end{eqnarray*}
By the change of variables $z=x+t(y-x)$ we can estimate the last two integrals  from above by
\begin{eqnarray*}
&&\int_0^{1\over 2}{1\over (1-t)^d}\int_B\int_{B_\delta(y)} |\nabla u(z)|^p\,dz\dy\,dt+ \int_{1\over 2}^1{1\over t^d}\int_A\int_{B_\delta(x)} |\nabla u(z)|^p\,dz\dx\,dt\\
&\le&2^{d-1}\int_B\int_{B_\delta(y)} |\nabla u(z)|^p\,dz\dy+2^{d-1}\int_A\int_{B_\delta(x)} |\nabla u(z)|^p\,dz\dx
\le \delta^d 2^d\omega_d\int_\Omega |\nabla u(z)|^p\,dz,
\end{eqnarray*}
which proves the claim.
\end{proof}

For all $A, B\in\mathcal A$ we set
\begin{equation}\label{effecek}
 F(u,A,B)=\int_{A\times B} f(x,y,u(x),u(y))\dxy
\end{equation}
for $u\in W^{1,p}(\Omega)$.

\begin{proposition}\label{contFcheck}
For every $u\in W^{1,p}(\Omega)$ and $A,B\in\AA$ with $A, B\subset\subset\Omega$  we have $F(u,A,B)<+\infty$ and the functional $u\mapsto F(u,A,B)$ is sequentially continuous with respect to the weak convergence in $W^{1,p}(\Omega)$.
\end{proposition}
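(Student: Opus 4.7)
The plan is to split the integral at a small $\delta$-neighbourhood of the diagonal, write
\[
F(u,A,B) = F^\delta(u,A,B) + \widecheck F^\delta(u,A,B),
\]
with $F^\delta(u,A,B) := \int_{(A\times B)\cap\Delta_\delta} f(x,y,u(x),u(y))\dxy$, and control each part separately using the tools assembled just above the statement: the pointwise bound on $f$ from Remark \ref{rem-est-f}, the estimate \eqref{stima-incr}, the fact that $b \in L^\infty$, and the continuity of $\widecheck F^\delta$ under strong $L^q$-convergence coming from Theorem \ref{contconvthm}.

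\emph{Finiteness.} Fix $0 < \delta \le \mathrm{dist}(A\cup B, \R^d\setminus \Omega)$, which is positive since $A,B\subset\subset\Omega$. The bound \eqref{growth-f} from Remark \ref{rem-est-f} gives
\[
F^\delta(u,A,B) \le \|b\|_{L^\infty} \int_{(A\times B)\cap\Delta_\delta}\frac{|u(x)-u(y)|^p}{|x-y|^p}\dxy + c_\Delta\!\!\int_{\Omega\times\Omega}(\alpha(x)+|u(x)|^q)(\alpha(y)+|u(y)|^q)\dxy.
\]
The first integral is controlled by \eqref{stima-incr}, and the second is finite because $u\in W^{1,p}(\Omega)\hookrightarrow L^q(\Omega)$ (the continuous embedding uses $q\le p^*$ when $p<d$) and $\alpha\in L^1$. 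The off-diagonal piece $\widecheck F^\delta(u,A,B)$ is finite by the growth condition \eqref{fkgrowth} satisfied by $f$ (Lemma \ref{effelim}), combined with Fubini and the $L^q$ regularity of $u$.

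\emph{Continuity.} Let $u_k \weak u$ in $W^{1,p}(\Omega)$. By Rellich--Kondrachov (again using $q<p^*$), $u_k \to u$ strongly in $L^q(\Omega)$, and $\sup_k\|\nabla u_k\|_{L^p}<+\infty$. Given $\e>0$, I would first choose $\delta$ so small that
\[
F^\delta(u_k,A,B)+F^\delta(u,A,B) < \e/2 \quad \text{for all } k.
\]
This uses the same two-term bound as above: the gradient term is dominated by $C\|b\|_{L^\infty}\delta^d\sup_k\|\nabla u_k\|_{L^p}^p$ by \eqref{stima-incr}, which is $o(1)$ as $\delta\to 0^+$ uniformly in $k$; the remainder term is dominated by $c_\Delta\int_{\Delta_\delta\cap(A\times B)}(\alpha(x)+|u_k(x)|^q)(\alpha(y)+|u_k(y)|^q)\dxy$, and since $(\alpha+|u_k|^q)(\alpha+|u_k|^q)$ converges in $L^1(\Omega\times\Omega)$ (as a product of $L^1$-convergent sequences), it is equi-integrable on $\Omega\times\Omega$, and $\mathcal{L}^{2d}(\Delta_\delta)\to 0$ as $\delta\to 0$. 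For this fixed $\delta$, Theorem \ref{contconvthm} yields $\widecheck F^\delta(u_k,A,B)\to \widecheck F^\delta(u,A,B)$, so that $|\widecheck F^\delta(u_k,A,B)-\widecheck F^\delta(u,A,B)|<\e/2$ for $k$ large, and the triangle inequality concludes.

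The main obstacle is the uniform smallness of $F^\delta(u_k,A,B)$ as $\delta\to 0$. The linear-growth bound from \eqref{growth-f} only controls it in terms of the difference quotient of $u_k$ and the $L^q$ norms, so the argument truly relies on both the $W^{1,p}$-bound of the sequence (through \eqref{stima-incr}) and the equi-integrability of $|u_k|^q$ on the shrinking set $\Delta_\delta$, which is where the Sobolev embedding into a strong $L^q$-topology is used in an essential way.
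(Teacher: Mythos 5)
Your proposal is correct and follows essentially the same route as the paper's proof: split $F$ at a $\delta$-neighbourhood of the diagonal, control the near-diagonal piece uniformly in $k$ by combining the bound \eqref{growth-f} with the gradient estimate \eqref{stima-incr} and the equi-integrability of $(\alpha+|u_k|^q)(\alpha+|u_k|^q)$ on the shrinking set $\Delta_\delta$, and pass to the limit in the off-diagonal piece via Theorem \ref{contconvthm} together with Rellich compactness. The only cosmetic difference is that you arrange for uniform-in-$k$ smallness of $F^\delta(u_k,A,B)$ by an explicit $\e/2$ bookkeeping, while the paper works directly with a $\limsup_k$ and then lets $\delta\to 0$; both are equivalent.
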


\begin{proof} The finiteness of $ F(u,A,B)$ follows from the boundedness of $b$, \eqref{growth-f}, \eqref{stima-incr}, and the Sobolev embedding theorem.

Let $u_k$ be weakly converging to $u$ in $W^{1,p}(\Omega)$.
We claim that the energy on $(A\times B)\cap\Delta_\delta$ is small as $\delta\to0$, uniformly with respect to $k$. 
Indeed, by \eqref{growth-f} we have
\begin{eqnarray*}
&&\int_{(A\times B)\cap\Delta_\delta} f(x,y, u_k(x), u_k(y)\dxy\\ &\le & \|b\|_\infty \int_{\Delta_\delta}{|u_k(x)-u_k(y)|^p\over|x-y|^p}\dxy+ c_\Delta\int_{(A\times B)\cap\Delta_\delta}\hskip-1cm(\alpha(x)+|u_k(x)|^q)(\alpha(y)+|u_k(y)|^q)\dxy,
\end{eqnarray*}
so that, by \eqref{stima-incr}, 
\begin{eqnarray*}
&&\limsup_{k\to+\infty}\int_{(A\times B)\cap\Delta_\delta} f(x,y, u_k(x), u_k(y))\dxy\\ &\le & C\|b\|_\infty \delta^d \sup_k\int_\Omega|\nabla u_k|^p\dx+ c_\Delta\int_{(A\times B)\cap\Delta_\delta}(\alpha(x)+|u(x)|^q)(\alpha(y)+|u(y)|^q)\dxy,
\end{eqnarray*}
 which proves the claim since $(\alpha(x)+|u(x)|^q)(\alpha(y)+|u(y)|^q)\in L^1(A\times B)$.
 
 We can now conclude that $F(u_k,A,B)$ tends to $F(u,A,B)$. This follows  from the estimate above, since  the functional $\widecheck F^\delta(\cdot ,A,B)$ defined in \eqref{contconvcheck}  is strongly continuous in $L^q(\Omega)$ and $u_k$ converge strongly to $u$ in $L^q(\Omega)$. 
\end{proof}

\section{Behaviour on the diagonal}\label{diag}

Let $\DD$ be the countable dense subset of $\AA$ introduced at the end of Section \ref{localiz}. By the compactness of $\Gamma$-convergence there exists a subsequence of indices $k$, not relabelled, and a functional $\Phi^\delta\colon W^{1,p}(\Omega)\times \DD \times \DD \to[0,+\infty]$ such that for every $\delta>0$, $\delta\in\Q$,  and for every $A,B\in\DD$
we have 
\begin{equation}\label{gammaD-delta}
\Phi^\delta_k(\cdot,A,B)\hbox{ $\Gamma$-converges to $\Phi^\delta(\cdot,A,B)$}
\end{equation}
with respect to the weak topology of $W^{1,p}(\Omega)$.
 
We consider the functionals $ \Phi'_\delta,  \Phi''_\delta\colon W^{1,p}(\Omega)\times\AA\times\AA\to[0,+\infty]$  defined for all $A,B\in\AA$ by
 \begin{eqnarray}
 \Phi'_\delta(\cdot,A,B)=\Gamma\hbox{-}\liminf_{k\to+\infty} \Phi^\delta_k(\cdot,A,B),\\
  \Phi''_\delta(\cdot,A,B)=\Gamma\hbox{-}\limsup_{k\to+\infty} \Phi^\delta_k(\cdot,A,B),
  \end{eqnarray}
where the $\Gamma$-limits are taken with respect to the weak topology of $W^{1,p}(\Omega)$. By \eqref{gammaD-delta} we have $\Phi'_\delta(u,A,B)=\Phi''_\delta(u,A,B)$ for all $u\in W^{1,p}(\Omega)$ and $A,B\in\DD$.

For all $u\in W^{1,p}(\Omega)$ and $A\in\AA$ we define 
\begin{eqnarray}\label{Phideltap}
\Phi'_\Delta(u,A):= \inf_{\delta>0} \Phi'_\delta(u,A,A)= \lim_{\delta\to0^+} \Phi'_\delta(u,A,A),\\ \label{Phideltas}
\Phi''_\Delta(u,A):= \inf_{\delta>0} \Phi''_\delta(u,A,A)= \lim_{\delta\to0^+} \Phi''_\delta(u,A,A).
\end{eqnarray}
Note that by \eqref{gammaD-delta} we have $\Phi'_\Delta(u,A)=\Phi''_\Delta(u,A)$ for all $u\in W^{1,p}(\Omega)$ and $A\in\DD$.

\begin{proposition}\label{aa}
Let $\Phi'$ and $\Phi''$ be defined by \eqref{Phim} and  \eqref{Phip}, respectively. Then
\begin{equation}\label{46}
\Phi'(u,A,A)= \Phi'_\Delta(u,A)+ F(u, A,A),\quad
\Phi''(u,A,A)= \Phi''_\Delta(u,A)+ F(u, A,A)
\end {equation}
for every $u\in W^{1,p}(\Omega)$ and $A\in\mathcal A$.
\end{proposition}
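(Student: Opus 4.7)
The plan is to fix $\delta>0$ and exploit the additive decomposition \eqref{triug} with $A=B$,
\[
\Phi_k(u,A,A)=\widecheck F^\delta_k(u,A,A)+\Phi^\delta_k(u,A,A),
\]
treating the first summand as a perturbation that converges \emph{continuously} in the weak topology of $W^{1,p}(\Omega)$. Indeed, any $u_k\weak u$ in $W^{1,p}(\Omega)$ converges strongly in $L^q(\Omega)$ by Rellich--Kondrachov (the standing hypothesis $q<p^*$ when $p<d$ is exactly what ensures compactness, with the cases $p\ge d$ being even easier), so Theorem \ref{contconvthm} applied with $v_k=u_k$, $v=u$ gives $\widecheck F^\delta_k(u_k,A,A)\to\widecheck F^\delta(u,A,A)$; this is precisely continuous convergence of $\widecheck F^\delta_k(\cdot,A,A)$ to $\widecheck F^\delta(\cdot,A,A)$.

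Next I would invoke the standard fact that continuously convergent perturbations commute with $\Gamma$-liminf and $\Gamma$-limsup: if $H_k\to H$ continuously and $\Psi_k$ is any sequence of functionals, then along every admissible $u_k\weak u$ one has $H_k(u_k)\to H(u)$, so $H(u)$ factors out of both $\liminf_k(\Psi_k(u_k)+H_k(u_k))$ and $\limsup_k(\Psi_k(u_k)+H_k(u_k))$ and hence out of the subsequent infimum over recovery sequences. Applied with $\Psi_k=\Phi^\delta_k(\cdot,A,A)$ and $H_k=\widecheck F^\delta_k(\cdot,A,A)$, this yields
\[
\Phi'(u,A,A)=\Phi'_\delta(u,A,A)+\widecheck F^\delta(u,A,A),\qquad \Phi''(u,A,A)=\Phi''_\delta(u,A,A)+\widecheck F^\delta(u,A,A)
\]
for every $\delta>0$ and every $A\in\AA$.

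The last step is to send $\delta\to 0^+$ in both identities. Since $f\ge 0$ and $\Delta$ is $\mathcal L^{2d}$-null, monotone convergence gives $\widecheck F^\delta(u,A,A)\nearrow F(u,A,A)$ in $[0,+\infty]$. On the other hand, $\Phi^\delta_k$ is non-decreasing in $\delta$ (because $F^\delta_k\ge 0$ is), so the same monotonicity is inherited by $\Phi'_\delta$ and $\Phi''_\delta$, and their $\delta\to 0^+$ limits are, by the very definitions \eqref{Phideltap}--\eqref{Phideltas}, exactly $\Phi'_\Delta(u,A)$ and $\Phi''_\Delta(u,A)$. Both summands being non-negative, there is no $\infty-\infty$ indeterminacy and the passage to the limit is unambiguous, producing the two identities in \eqref{46}. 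I do not expect a serious obstacle: Theorem \ref{contconvthm} does all the analytic work (via Rellich compactness and the $L^q$-continuity of $f$), while the remaining steps are a textbook continuous-perturbation lemma for $\Gamma$-convergence and a one-line monotone-convergence argument.
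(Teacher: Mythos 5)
Your proof is correct and follows essentially the same route as the paper's: decompose $\Phi_k(\cdot,A,A)$ via \eqref{triug}, use the continuous convergence of $\widecheck F^\delta_k$ from Theorem \ref{contconvthm} to split the $\Gamma$-liminf and $\Gamma$-limsup, and then let $\delta\to 0^+$ using the definitions \eqref{Phideltap}--\eqref{Phideltas} and the monotone convergence $\widecheck F^\delta(u,A,A)\nearrow F(u,A,A)$. The only difference is that you spell out the standard continuous-perturbation lemma for $\Gamma$-limits, which the paper invokes implicitly.
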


\begin{proof} We have $$\Phi_k(u,A,A)=\Phi^\delta_k(u,A,A)+\widecheck F^\delta_k(u,A,A)$$
for every $u\in W^{1,p}(\Omega)$ and $A\in\mathcal A$.  From \eqref{contconvcheck-1}
we then deduce
$$\Phi'(u,A,A)=\Phi'_\delta(u,A,A)+\widecheck F^\delta(u,A,A),\qquad
\Phi''(u,A,A)=\Phi''_\delta(u,A,A)+\widecheck F^\delta(u,A,A).$$
Letting $\delta\to0$ we obtain \eqref{46}
by the definition of $\Phi'_\Delta$ and $\Phi''_\Delta$ and the integral form of $ F(u,A,A)$.
\end{proof}

\begin{proposition}\label{semicPhi}
For all $A\in\mathcal A$ with $A\subset\subset\Omega$ the functionals $\Phi'_\Delta(\cdot,A)$ and $\Phi''_\Delta(\cdot,A)$ are sequentially weakly lower semicontinuous in $W^{1,p}(\Omega)$.
\end{proposition}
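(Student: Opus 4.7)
The plan is to exploit the decomposition provided by Proposition \ref{aa}, namely
$$\Phi'(u,A,A) = \Phi'_\Delta(u,A) + F(u,A,A), \qquad \Phi''(u,A,A) = \Phi''_\Delta(u,A) + F(u,A,A),$$
and to reduce the desired lower semicontinuity of $\Phi'_\Delta(\cdot,A)$ and $\Phi''_\Delta(\cdot,A)$ to already established semicontinuity/continuity properties of the two pieces on the right-hand side. In this way nothing needs to be re-proved about the $\Gamma$-liminf/$\Gamma$-limsup in the weak topology; I only need to recombine the two previous propositions.

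First, I would invoke the general abstract principle that, with respect to any topology, both the $\Gamma$-liminf and the $\Gamma$-limsup of a sequence of functionals are lower semicontinuous with respect to that same topology. Applied to the weak topology of $W^{1,p}(\Omega)$, this says that $\Phi'(\cdot,A,A)$ and $\Phi''(\cdot,A,A)$ are sequentially weakly lower semicontinuous on $W^{1,p}(\Omega)$.

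Second, since by assumption $A \subset\subset \Omega$, Proposition \ref{contFcheck} applies to $F(\cdot,A,A)$: this functional is finite-valued on $W^{1,p}(\Omega)$ and sequentially weakly continuous. Consequently, for any sequence $u_n \weak u$ in $W^{1,p}(\Omega)$ one has $F(u_n,A,A) \to F(u,A,A) < +\infty$, and rearranging \eqref{46} gives
$$\Phi'_\Delta(u,A) = \Phi'(u,A,A) - F(u,A,A) \le \liminf_{n} \Phi'(u_n,A,A) - \lim_{n} F(u_n,A,A) = \liminf_{n} \Phi'_\Delta(u_n,A),$$
and the identical argument with $\Phi''$ in place of $\Phi'$ yields the corresponding inequality for $\Phi''_\Delta$.

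There is no serious obstacle in this scheme: the real work has been deposited into Proposition \ref{aa} (whose validity rests on the continuous-convergence property \eqref{contconvcheck-1} for the off-diagonal part) and into Proposition \ref{contFcheck} (which uses the gradient estimate \eqref{stima-incr} to control the diagonal contribution of $F$). The only point worth emphasising is that the hypothesis $A \subset\subset \Omega$ is essential, precisely because it is needed to subtract $F(u,A,A)$ unambiguously and to exploit its weak continuity; dropping it would force $F(\cdot,A,A)$ to be at most weakly lower semicontinuous, which would not suffice for the rearrangement above.
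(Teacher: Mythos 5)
Your proposal is correct and follows essentially the same route as the paper: the paper's proof is exactly to note that $\Phi'(\cdot,A,A)$ and $\Phi''(\cdot,A,A)$ are weakly lower semicontinuous (being a $\Gamma$-liminf and a $\Gamma$-limsup) and then to subtract off the weakly continuous, finite-valued term $F(\cdot,A,A)$ using Propositions \ref{contFcheck} and \ref{aa}. You have merely written out the rearrangement step that the paper leaves implicit.
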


\begin{proof} We note that $\Phi'(\cdot,A,A)$ and $\Phi''(\cdot,A,A)$ are weakly lower semicontinuous in $W^{1,p}(\Omega)$ as $\Gamma$-limits. Then the claim follows from Propositions \ref{contFcheck} and \ref{aa}.
\end{proof}

Note that $\Phi'_\Delta(u,A)\le \Phi'_\Delta(u,B)$ and $\Phi''_\Delta(u,A)\le \Phi''_\Delta(u,B)$ if $A\subset B$.
Since $\Phi'_\Delta(u,A)=\Phi''_\Delta(u,A)$ for all $u\in W^{1,p}(\Omega)$ and $A\in\DD$, we can define
$\Phi_\Delta\colon W^{1,p}(\Omega)\times\AA\to[0,+\infty]$ by
\begin{eqnarray}\label{PhiDelta}
\Phi_\Delta(u,A):= \sup\{\Phi'_\Delta(u,B): B\in\AA, B\subset\subset A\}= \sup\{\Phi''_\Delta(u,B): B\in\AA, B\subset\subset A\}.
\end{eqnarray}
Note that $A\mapsto \Phi_\Delta(u,A)$ is inner regular; i.e.,
\begin{eqnarray}\label{inner-reg}
\Phi_\Delta(u,A)= \sup\{\Phi_\Delta(u,B): B\in\AA, B\subset\subset A\}.
\end{eqnarray}
Moreover, by Proposition \ref{semicPhi} 
\begin{eqnarray}\label{inner-reg-lsc}
\Phi_\Delta(\cdot,A)\hbox{ is sequentially weakly lower semicontinuous in $W^{1,p}(\Omega)$}
\end{eqnarray}
for every $A\in\mathcal A$.

\begin{proposition}\label{estab} There exist $c_2>0$ such that for all $u\in W^{1,p}(\Omega)$ we have
\begin{eqnarray}\label{estdalla}
\Phi_\Delta(u,A)\le \Phi''_\Delta(u,A)\le\int_A(a(x)+c_2|\nabla u(x)|^p)\dx
\end{eqnarray}
for all $A\in\AA$.
\end{proposition}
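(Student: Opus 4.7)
The first inequality $\Phi_\Delta(u,A)\le\Phi''_\Delta(u,A)$ follows directly from the definition \eqref{PhiDelta}: since each $\Phi^\delta_k(u,\cdot,\cdot)$ is monotone in its set arguments, so is the $\Gamma$-$\limsup$ $\Phi''_\delta$, and hence $B\mapsto\Phi''_\Delta(u,B)$; thus the supremum over $B\subset\subset A$ in \eqref{PhiDelta} is at most $\Phi''_\Delta(u,A)$.

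For the second inequality, the plan is to test the $\Gamma$-$\limsup$ at the constant sequence $u_k\equiv u$, which gives
\[
\Phi''_\delta(u,A,A)\le\limsup_{k\to+\infty}\bigl[F^\delta_k(u,A,A)+G_k(u,A)\bigr].
\]
I would bound the local part at once using \eqref{usual} as $G_k(u,A)\le c_1\int_A|\nabla u|^p\,dx+\int_A a\,dx$. For the non-local part, combining \eqref{doubleboundtf} and \eqref{growthbk} yields
\[
f_k(x,y,u(x),u(y))\le\frac{b_k(x,y)}{|x-y|^p}|u(x)-u(y)|^p+c_\Delta(\alpha(x)+|u(x)|^q)(\alpha(y)+|u(y)|^q),
\]
and the integral of the last summand over $\Delta_\delta\cap(A\times A)$ is $o(1)$ as $\delta\to 0$ uniformly in $k$, since the integrand lies in $L^1(\Omega\times\Omega)$ (via the Sobolev embedding $W^{1,p}\hookrightarrow L^q$) and $\mathcal L^{2d}(\Delta_\delta)\to 0$.

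To control the leading contribution, I would extend $u$ to $\tilde u\in W^{1,p}(\mathbb R^d)$ (using the Lipschitz regularity of $\partial\Omega$) so that the pointwise bound $|u(x)-u(y)|^p/|x-y|^p\le\int_0^1|\nabla\tilde u(x+s(y-x))|^p\,ds$ reduces the task to estimating
\[
\int_0^1\int_{\Delta_\delta\cap(A\times A)}b_k(x,y)|\nabla\tilde u(x+s(y-x))|^p\,dxdy\,ds.
\]
After regularizing $\tilde u$ by smooth convolutions and inserting smooth cut-offs for $\Delta_\delta\cap(A\times A)$, the integrand in $(x,y,s)$ becomes continuous on a compact set, so the weak-$*$ convergence \eqref{weakconvbk} of $b_k\mathcal L^{2d}$ together with Fatou produces, upon $\limsup$ in $k$, a bound by
\[
\int_0^1\int_\Delta|\nabla\tilde u(x)|^p b_\Delta\,d\mathcal H^d\,ds+\int_0^1\int_{\Delta_\delta\cap(A\times A)}b(x,y)|\nabla\tilde u(x+s(y-x))|^p\,dxdy\,ds,
\]
using that on $\Delta$ one has $x+s(y-x)=x$. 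The first term equals $2^{d/2}\int_A|\nabla u|^p b_\Delta(x,x)\,dx\le 2^{d/2}\|b_\Delta\|_\infty\int_A|\nabla u|^p$, while the second is $o(1)$ as $\delta\to 0$ since $b\in L^\infty$; a density argument removes the regularization.

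Combining everything and letting first $k\to+\infty$ and then $\delta\to 0^+$ will give
\[
\Phi''_\Delta(u,A)\le\int_A a\,dx+(c_1+2^{d/2}\|b_\Delta\|_\infty)\int_A|\nabla u|^p\,dx,
\]
which is the claim with $c_2:=c_1+2^{d/2}\|b_\Delta\|_\infty$. The main technical obstacle is the passage to the weak-$*$ limit against a test function of limited regularity (the $s$-averaged gradient); I would handle this by smoothing $\tilde u$ and inserting cut-offs, then removing the regularization via the $W^{1,p}$-continuity of the bounds.
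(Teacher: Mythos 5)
Your first inequality is fine, and your overall plan (test the $\Gamma$-$\limsup$ on the constant sequence $u_k\equiv u$, split the non-local part via \eqref{doubleboundtf}--\eqref{growthbk}, pass to the weak-$*$ limit \eqref{weakconvbk}, control the lower-order $\alpha$-terms via $\mathcal L^{2d}(\Delta_\delta)\to0$) is the right one. But the route you take for the leading $b_k$-term is genuinely different from the paper's, and it has a gap precisely at the point you flag as ``the main technical obstacle.''

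Your plan is to work directly with an arbitrary $u\in W^{1,p}(\Omega)$, extend to $\tilde u\in W^{1,p}(\mathbb R^d)$, bound the difference quotient by $\int_0^1|\nabla\tilde u(x+s(y-x))|^p\,ds$, mollify $\tilde u$ to make this test function continuous so that \eqref{weakconvbk} applies, and then ``remove the regularization via the $W^{1,p}$-continuity of the bounds.'' The last step does not close. Replacing $\nabla\tilde u$ by $\nabla\tilde u_\e$ inside the $b_k$-weighted integral produces an error term of the shape
\[
\int_0^1\!\!\int_{(A\times A)\cap\Delta_\delta} b_k(x,y)\,\big|\,|\nabla\tilde u|^p-|\nabla\tilde u_\e|^p\,\big|\big(x+s(y-x)\big)\,dx\,dy\,ds ,
\]
and to send $\e\to0$ \emph{before} $k\to+\infty$ you would need a bound of the form
\[
\sup_{k,\delta}\int_0^1\!\!\int_{(A\times A)\cap\Delta_\delta} b_k(x,y)\,|w(x+s(y-x))|^p\,dx\,dy\,ds \le C\,\|w\|_{L^p}^p .
\]
This is \emph{not} a consequence of \eqref{cibi}. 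When you change variables $z=x+s(y-x)$ and try to freeze $z$, what remains is an integral of $b_k$ along a tilted line in $(x,y)$-space, which neither $\int_\Omega b_k(x,y)\,dx\le c_b$ nor its symmetric analogue controls for general $b_k$ (it does work in the translation-invariant examples $b_k(x,y)=\phi_k(x-y)$, but the proposition is stated for general $b_k$). The analogous Lemma with estimate \eqref{stima-incr} in the paper is proved for the pure difference quotient, \emph{without} the weight $b_k$, and its change-of-variables argument does not carry over with $b_k$ inserted.

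The paper avoids this entirely by reversing the order of the two approximations. It first proves the inequality for $u\in C^1(\overline\Omega)$, using Taylor's formula on $\Delta_\delta$ to write $|u(x)-u(y)|/|x-y|\le|\nabla u(x)|+\omega(\delta)$, so that after the elementary estimate $|\nabla u(x)+\omega(\delta)|^p\le 2^{p-1}|\nabla u(x)|^p+2^{p-1}\omega(\delta)^p$ the test function for \eqref{weakconvbk} is just $(x,y)\mapsto|\nabla u(x)|^p$, already continuous and independent of $\delta$, while the $\omega(\delta)^p$ term is absorbed via \eqref{cibi}. Only \emph{then} does it pass from $C^1(\overline\Omega)$ to general $u\in W^{1,p}(\Omega)$, by approximating in $W^{1,p}$ and invoking the sequential weak lower semicontinuity of $\Phi''_\Delta(\cdot,A)$ already established in Proposition \ref{semicPhi} (this is what makes ``$W^{1,p}$-continuity of the bounds'' rigorous: the right-hand side is strongly continuous in $u$, the left-hand side is weakly lower semicontinuous). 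If you restructure your argument the same way—prove it first for $C^1$ data, where your FTC test function is itself continuous and no mollification is needed, then use Proposition \ref{semicPhi} and inner regularity \eqref{inner-reg} to remove the restrictions on $u$ and $A$—the gap disappears and you recover the proposition with a constant of the type you computed, $c_2=c_1+2^{d/2}\|b_\Delta\|_\infty$, which is compatible with the paper's (it writes ``e.g.'' before its constant).
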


\begin{proof}Let $u\in C^1(\overline\Omega)$, let $A\in\AA$ with $A\subset \subset \Omega$ and $\mathcal L^d(\partial A)=0$, and $\delta>0$ with $\delta\le {\rm dist} (A,\mathbb R^d\setminus\Omega)$. Then by Taylor's formula there exists a modulus of continuity $\omega$ such that
\begin{eqnarray}\label{stimac}\nonumber
\Phi^\delta_k(u,A,A)&\le& \int_A(c_1|\nabla u (x)|^p+ a(x))\dx+\int_{(A\times A)\cap \Delta_\delta} {b_k(x,y)\over |x-y|^p} |u(x)-u(y)|^p\dxy\\ \nonumber
&& + c_\Delta\int_{(A\times A)\cap \Delta_\delta} (\alpha(x)+|u(x)|^q)(\alpha(y)+|u(y)|^q)\dxy
\\ \nonumber
&\le& \int_A(c_1|\nabla u (x)|^p+ a(x))\dx+\int_{(A\times A)\cap \Delta_\delta} b_k(x,y)|\nabla u(x)+\omega(\delta)|^p\dxy\\ \nonumber
&& + c_\Delta\int_{(A\times A)\cap \Delta_\delta} (\alpha(x)+|u(x)|^q)(\alpha(y)+|u(y)|^q)\dxy
\\ \nonumber
&\le& \int_A(c_1|\nabla u (x)|^p+ a(x))\dx+2^{p-1}\int_{(A\times A)\cap \Delta_\delta} b_k(x,y)|\nabla u(x)|^p\dxy \\
&& + c_\Delta\int_{(A\times A)\cap \Delta_\delta} (\alpha(x)+|u(x)|^q)(\alpha(y)+|u(y)|^q)\dxy\nonumber\\ && +2^{p-1} c_b\mathcal L^d(A) \omega^p(\delta),
\end{eqnarray}
where in the last inequality we have used the boundedness assumption \eqref{cibi}.

Letting $k\to+\infty$ we obtain
\begin{eqnarray*}
\Phi''_\delta(u,A)
&\le& \int_A(c_1|\nabla u (x)|^p+ a(x))\dx+2^{p-1}\int_{(A\times A)\cap \Delta} b_\Delta(x,y)|\nabla u(x)|^p\,d\HH^d(x,y)\\
&&+2^{p-1}\int_{(A\times A)\cap \Delta_\delta} b(x,y)|\nabla u(x)|^p\dxy \\
&& + c_\Delta\int_{(A\times A)\cap \Delta_\delta} (\alpha(x)+|u(x)|^q)(\alpha(y)+|u(y)|^q)\dxy+2^{p-1}C \omega^p(\delta),
\end{eqnarray*}
while, letting $\delta\to0$, we get
\begin{eqnarray*}
\Phi''_\Delta(u,A)
\le \int_A(c_1|\nabla u (x)|^p+ a(x))\dx+2^{p-1}\int_{(A\times A)\cap \Delta} b_\Delta(x,y)|\nabla u(x)|^p\,d\HH^d(x,y),
\end{eqnarray*}
which proves the claim e.g.~with $c_2=c_1+d 2^{p-1}\|b_\Delta\|_\infty$ under our hypotheses on $u$ and $A$.

For a general $u\in W^{1,p}(\Omega)$ we fix $v\in W^{1,p}_0(\Omega)$ such that $u=v$ almost everywhere in $A$, which is possible since we assume that $A\subset \subset \Omega$. Approximating $v$ with functions in $C^\infty_c(\Omega)$, we conclude the proof by the lower semicontinuity of $\Phi''_\Delta(\cdot,A)$. To remove the hypothesis $A\subset\subset \Omega$ we use the inner regularity given by \eqref{inner-reg}.
\end{proof}

The following proposition will be used to relate the values $\Phi'(u,A,B)$ and $\Phi''(u,A,B)$  to $ F(u, A,B)+\Phi'_\Delta(u,A\cap B)$ and $ F(u, A,B)+\Phi''_\Delta(u,A\cap B)$, respectively.
We use the notation $A_\eta:=\{x\in\Omega: {\rm dist}(x,A)<\eta\}$.

\begin{proposition} 
Let $\Phi'$ and $\Phi''$ be defined by \eqref{Phim} and  \eqref{Phip}, respectively, let 
$\Phi'_\Delta$ and $\Phi''_\Delta$ be defined by
\eqref{Phideltap} and \eqref{Phideltas}, respectively, let $F$ be defined in \eqref{effecek}, let $A,B\in\mathcal A$, let $u\in W^{1,p}(\Omega)$, and let  $\eta>0$. Then we have
\begin{eqnarray}\label{phid-1}
 F(u, A,B)+\Phi'_\Delta(u,A\cap B)
\le
\Phi'(u,A,B)
\le  F(u, A,B)+\Phi'_\Delta(u,A_\eta\cap B_\eta),
\\ \label{phid-2}
 F(u, A,B)+\Phi''_\Delta(u,A\cap B)
\le
\Phi''(u,A,B)
\le  F(u, A,B)+\Phi''_\Delta(u,A_\eta\cap B_\eta).
\end{eqnarray}
\end{proposition}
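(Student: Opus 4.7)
The organizing identity is~\eqref{triug}, $\Phi_k(u,A,B) = \widecheck F^\delta_k(u,A,B) + \Phi^\delta_k(u,A,B)$, combined with Theorem~\ref{contconvthm}: along any $u_k \weak u$ in $W^{1,p}(\Omega)$, Rellich--Kondrachov gives $u_k\to u$ strongly in $L^q(\Omega)$ (since $q<p^*$), so by Theorem~\ref{contconvthm} the first term \emph{converges} to $\widecheck F^\delta(u,A,B)$. This genuine convergence (not a mere inequality) is what lets the argument run identically for $\Phi'$ and $\Phi''$, since for any $a_k\to a$ we have both $\liminf_k(a_k+b_k)=a+\liminf_k b_k$ and $\limsup_k(a_k+b_k)=a+\limsup_k b_k$.

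For the left inequalities in \eqref{phid-1}--\eqref{phid-2} I would exploit the trivial inclusion $(A\cap B)\times(A\cap B)\subset A\times B$ together with $f_k\ge 0$: this yields $F^\delta_k(u_k,A\cap B,A\cap B)\le F^\delta_k(u_k,A,B)$, and adding $G_k(u_k,A\cap B)$ gives $\Phi^\delta_k(u_k,A\cap B,A\cap B)\le \Phi^\delta_k(u_k,A,B)$. For any $u_k\weak u$, this and~\eqref{triug} combine to
\begin{equation*}
\Phi_k(u_k,A,B)\ge \widecheck F^\delta_k(u_k,A,B)+\Phi^\delta_k(u_k,A\cap B,A\cap B).
\end{equation*}
Passing to $\liminf$ (respectively $\limsup$), isolating the convergent term, then infimizing over $u_k$ and invoking the definitions of $\Phi'_\delta$ and $\Phi''_\delta$, I obtain $\Phi'(u,A,B)\ge \widecheck F^\delta(u,A,B)+\Phi'_\delta(u,A\cap B,A\cap B)$ and the analogue for $\Phi''$. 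Sending $\delta\to 0^+$, monotone convergence (the diagonal has zero Lebesgue measure) gives $\widecheck F^\delta\to F$ on $A\times B$, and \eqref{Phideltap}--\eqref{Phideltas} close out the lower bounds.

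For the right inequalities the key geometric input is the inclusion
\begin{equation*}
(A\times B)\cap\Delta_\delta\subset (A_\eta\cap B_\eta)\times(A_\eta\cap B_\eta)\quad\text{for every }0<\delta<\eta:
\end{equation*}
if $(x,y)\in A\times B$ with $|x-y|<\delta<\eta$, then $y\in B$ forces $x\in B_\eta$ and $x\in A$ forces $y\in A_\eta$, placing both points in $A_\eta\cap B_\eta$. Coupled with $A\cap B\subset A_\eta\cap B_\eta$, this gives the pointwise monotonicity $\Phi^\delta_k(v,A,B)\le \Phi^\delta_k(v,A_\eta\cap B_\eta,A_\eta\cap B_\eta)$ for every $v\in W^{1,p}(\Omega)$ and $\delta<\eta$. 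For $\Phi''$ I then pick $u_k\weak u$ almost optimal for the $\Gamma\limsup$ of $\Phi^\delta_k$ on $(A_\eta\cap B_\eta)^2$; applying the pointwise comparison, using~\eqref{triug}, and splitting the $\limsup$ yields $\Phi''(u,A,B)\le \widecheck F^\delta(u,A,B)+\Phi''_\delta(u,A_\eta\cap B_\eta,A_\eta\cap B_\eta)$. The $\Phi'$ bound is identical, using an $\varepsilon$-almost infimizer for the $\Gamma\liminf$ and letting $\varepsilon\to 0$. Finally, $\delta\to 0^+$ yields the stated upper bounds.

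The only subtle point is that all four inequalities depend on the \emph{actual} convergence of $\widecheck F^\delta_k$ from Theorem~\ref{contconvthm}, which is what allows it to be pulled cleanly out of both $\liminf$ and $\limsup$; the $\eta$-enlargement on the right of \eqref{phid-1}--\eqref{phid-2} is forced by precisely the geometric inclusion above, since points $(x,y)\in(A\times B)\cap\Delta_\delta$ need not lie in $(A\cap B)^2$ but do lie in $(A_\eta\cap B_\eta)^2$.
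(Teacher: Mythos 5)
Your proof is correct and follows essentially the same route as the paper: decompose via \eqref{triug}, pull out the off-diagonal term $\widecheck F^\delta_k$ using the genuine convergence from Theorem~\ref{contconvthm} (which, as you note, applies because $u_k\weak u$ in $W^{1,p}(\Omega)$ and $q<p^*$ give strong $L^q$ convergence by Rellich), compare $\Phi^\delta_k(\cdot,A,B)$ from below by $\Phi^\delta_k(\cdot,A\cap B,A\cap B)$ and from above by $\Phi^\delta_k(\cdot,A_\eta\cap B_\eta,A_\eta\cap B_\eta)$ via exactly the same geometric inclusions (the paper writes the intermediate step through $A_\delta\cap B_\delta$ but arrives at the same $A_\eta\cap B_\eta$ bound), and finally let $\delta\to 0^+$. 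The only cosmetic difference is that you phrase the passage to $\Phi'_\delta$, $\Phi''_\delta$ in terms of almost-optimal sequences, whereas the paper takes an arbitrary $u_k\weak u$ and then infimizes; these are equivalent.
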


\begin{proof}
Let $0<\delta\le\eta$. Note that $$(A\times B)\cap\Delta_\delta\subset \bigl((A_\delta\cap B_\delta)\times (A_\delta\cap B_\delta)\bigr)\cap\Delta_\delta\subset \bigl((A_\eta\cap B_\eta)\times (A_\eta\cap B_\eta)\bigr)\cap\Delta_\delta.$$ Let $u_k$ be a sequence in $W^{1,p}(\Omega)$ weakly converging to $u$. Then
\begin{eqnarray*}
\Phi_k(u_k,A,B)&=& \widecheck F_k^\delta(u_k, A,B)+\Phi_k^\delta(u_k,A,B)\\
&\le& \widecheck F_k^\delta(u_k, A,B)+\Phi_k^\delta(u_k,A_\eta\cap B_\eta,A_\eta\cap B_\eta).
\end{eqnarray*}
By \eqref{contconvcheck-1} we then obtain
\begin{eqnarray*}
\liminf_{k\to+\infty}\Phi_k(u_k,A,B)
\le \widecheck F^\delta(u, A,B)+\liminf_{k\to+\infty}\Phi_k^\delta(u_k,A_\eta\cap B_\eta,A_\eta\cap B_\eta),
\end{eqnarray*}
and, by the arbitrariness of $u_k$,
\begin{eqnarray*}
\Phi'(u,A,B)
\le \widecheck F^\delta(u, A,B)+\Phi'_\delta(u,A_\eta\cap B_\eta,A_\eta\cap B_\eta).
\end{eqnarray*}
Taking the limit as $\delta\to0$ we then get the second inequality in \eqref{phid-1}.
As for the first inequality in \eqref{phid-1}, we observe that
\begin{eqnarray*}
\Phi_k(u_k,A,B)&=& \widecheck F_k^\delta(u_k, A,B)+\Phi_k^\delta(u_k,A,B)\\
&\ge& \widecheck F_k^\delta(u_k, A,B)+\Phi_k^\delta(u_k,A\cap B,A\cap B),
\end{eqnarray*}
and proceed as above. Similarly, we prove \eqref{phid-2}.
\end{proof}

\section{Integral representation of the term on the diagonal}\label{intrepsec}
In this section we prove that the functional $\Phi_\Delta$ introduced in \eqref{PhiDelta} can be represented as an integral; more precisely, we shall prove the following theorem.

\begin{theorem}\label{mainth} Let $f_k$ satisfy \eqref{fkgrowth}--\eqref{invariance}, let $g_k$ satisfy \eqref{usual}, and let $\Phi_\Delta$ be defined by \eqref{PhiDelta}. Then there exists a Borel function $g:\Omega\times\R^d\to[0,+\infty)$, convex in the second variable, such that
\begin{equation}
\Phi_\Delta(u,A)=\int_A g(x,\nabla u(x))\dx
\end{equation}
for all $u\in W^{1,p}(\Omega)$ and $A\in \AA$. 
\end{theorem}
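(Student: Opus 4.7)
The plan is to verify that $\Phi_\Delta$ satisfies the hypotheses of an abstract integral representation theorem of Buttazzo--Dal Maso type for functionals on Sobolev spaces, namely: (i) sequential weak lower semicontinuity in $u$, (ii) locality in $u$, (iii) invariance under addition of a constant, (iv) the measure property in the set variable $A$, and (v) a $p$-growth upper bound. Properties (i) and (v) are already in hand from Propositions \ref{semicPhi}--\ref{estab} together with the inner regularity \eqref{inner-reg}, and convexity of the resulting density $g$ in the gradient variable is automatic from (i) once the representation depends only on $\nabla u$. So the content of the proof is to establish (ii), (iii), and (iv).

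For the locality property (ii), I would use that $\Phi_\Delta$ sees only contributions from $\Delta_\delta$ with $\delta$ arbitrarily small, so if $u=v$ a.e.\ on $A$ one can replace any recovery sequence for $u$ on $B\subset\subset A$ by a sequence built from a recovery sequence for $v$ after a cut-off concentrated in a $\delta$-neighbourhood of $B$, which for small $\delta$ lies inside $A$; the resulting discrepancy lives outside $\Delta_\delta$ and is hence absorbed into $\widecheck F^\delta$. For the translation invariance (iii), fix $r\in\mathbb R$, take a recovery sequence $u_k\weak u$ for $\Phi_\Delta(u,A)$, and apply hypothesis \eqref{invariance} to obtain
\begin{equation*}
\bigl|\Phi^\delta_k(u_k+r,A,A)-\Phi^\delta_k(u_k,A,A)\bigr|\le \int_{(A\times A)\cap\Delta_\delta}\!\!\bigl(\alpha_r(x,y)+\omega_r(\delta)f_k(\cdot)+c_\Delta(\alpha(x)+|u_k(x)|^q)(\alpha(y)+|u_k(y)|^q)\bigr)\dxy.
\end{equation*}
The first and third summands vanish as $\delta\to 0$ by absolute continuity (using the strong $L^q$ convergence of $u_k$ in the third), and the middle summand is $\omega_r(\delta)$ times a quantity uniformly bounded by \eqref{growthbk}--\eqref{cibi} and the equi-boundedness of $\|\nabla u_k\|_{L^p}$ via \eqref{stima-incr}. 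Passing to the limit first in $k$ and then in $\delta$ gives $\Phi_\Delta(u+r,A)=\Phi_\Delta(u,A)$.

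The main obstacle will be (iv), the measure property. Superadditivity on disjoint open sets is elementary from the localized definition, so the heart of the matter is subadditivity: for $A',A''\in\mathcal A$ with $A''\subset\subset A'$ and any $A\in\mathcal A$, one needs $\Phi_\Delta(u,A'\cup A)\le \Phi_\Delta(u,A')+\Phi_\Delta(u,A)$. I would establish this via the fundamental estimate of $\Gamma$-convergence: pick recovery sequences $u_k',u_k''$ for $\Phi_\Delta(u,A')$ and $\Phi_\Delta(u,A)$ respectively, a cut-off $\varphi$ between $A''$ and $A'$, and form $w_k:=\varphi u_k'+(1-\varphi)u_k''$. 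The local parts $G_k$ combine well thanks to the $p$-growth bound \eqref{usual} and a De Giorgi-type averaging in a collar between $A''$ and $A'$. The nonlocal parts split into the double-$A'$ and double-$A$ contributions plus cross-terms where $x$ and $y$ lie in disjoint regions; restricted to $\Delta_\delta$ these cross-terms are supported in a narrow strip near $\partial A''$, where the convexity \eqref{convextf}, the bound \eqref{doubleboundtf}, and \eqref{growthbk}--\eqref{cibi} yield an estimate of order $\delta$ after a choice of $\varphi$ whose gradient is uniformly bounded on the collar. Combined with \eqref{estdalla}, this proves that $A\mapsto\Phi_\Delta(u,A)$ is the trace of a Borel measure absolutely continuous with respect to $\mathcal L^d$, with density bounded by $a(x)+c_2|\nabla u(x)|^p$.

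Once (i)--(v) are in place, the abstract integral representation theorem produces a Borel function $g(x,\xi)$, convex in $\xi$ with $p$-growth, such that $\Phi_\Delta(u,A)=\int_A g(x,\nabla u(x))\dx$ for all $u\in W^{1,p}(\Omega)$ and $A\in\mathcal A$, which is the asserted conclusion.
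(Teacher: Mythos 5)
Your overall strategy --- verify locality, translation invariance, the measure property, weak lower semicontinuity, and a $p$-growth bound, then invoke the abstract integral representation theorem for local functionals --- is exactly the route taken in the paper. The sketches of locality and of translation invariance (cut-off inside a $\delta$-collar, then use \eqref{invariance} together with Rellich) also match the paper's Propositions \ref{locality} and \ref{inv-co}.

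There is, however, an imprecision in your sketch of the subadditivity step that would cause the argument as written to fail. You attribute the De Giorgi averaging only to the local terms $G_k$ and claim that the nonlocal cross-terms localized near the collar ``yield an estimate of order~$\delta$.'' They do not. Applying convexity to $\widetilde f_k\bigl(x,y,u_k^i(x)-u_k^i(y)\bigr)$ and then the left inequality in \eqref{doubleboundtf} produces a term of the form $c_\Delta \int f_k(x,y,v_k(x),v_k(y))$ over (collar)$\times$(collar)$\,\cap\,\Delta_\delta$; this is a fixed multiple, by the structure constant $c_\Delta$ (which need not be small), of part of the recovery energy on a thin shell. It does \emph{not} vanish as $\delta\to0$ for a fixed shell. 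The cure is precisely the paper's Lemma~\ref{lemmasub-0}: split the annulus between $A'$ and $A''$ into $m$ shells $A_0\subset\subset\dots\subset\subset A_m$ and average the nonlocal cross-terms together with the local collar terms over $i\in\{2,\dots,m-1\}$, choosing $m$ so that $(3c_\Delta+2\widehat c+2C_u)/(m-2)\le\sigma$; this makes the error a $\sigma$-fraction of the total and produces the almost-subadditivity \eqref{subad-0}. The Rellich term (the piece involving $b_k|v_k-w_k|^p$) is the only one that genuinely vanishes as $k\to\infty$, and the constant in front of it depends on $m$, so its order relative to the shell count is harmless. Once this is fixed the rest of your outline coincides with the paper's proof.
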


The proof of the theorem will be achieved after some technical results as follows.
We start with proving a subadditivity property.
\begin{proposition}\label{subaddit}
Let $u\in W^{1,p}(\Omega)$. Let $A,B\in\AA$. Then we have
\begin{equation}\label{subad=Phi}
\Phi_\Delta(u, A\cup B)\le \Phi_\Delta(u, A)+\Phi_\Delta(u, B).
\end{equation}
\end{proposition}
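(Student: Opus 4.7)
\medskip
\noindent\textbf{Plan.} By the inner regularity \eqref{inner-reg}, it suffices to prove, for every $C\in\AA$ with $C\subset\subset A\cup B$, the bound $\Phi''_\Delta(u,C)\le \Phi_\Delta(u,A)+\Phi_\Delta(u,B)$. I would pick $A',B'\in\AA$ with $A'\subset\subset A$, $B'\subset\subset B$ and $C\subset A'\cup B'$; since by the definition of $\Phi_\Delta$ one has $\Phi''_\Delta(u,A')\le \Phi_\Delta(u,A)$ and analogously for $B$, the task reduces to showing $\Phi''_\Delta(u,C)\le \Phi''_\Delta(u,A')+\Phi''_\Delta(u,B')$. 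Unfolding $\Phi''_\Delta(u,E)=\lim_{\delta\to 0^+}\Phi''_\delta(u,E,E)$, the proof will follow from a fundamental-estimate type bound at the level of the functionals $\Phi^\delta_k$.

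\medskip
\noindent\textbf{Gluing construction.} Fix $\eta>0$ and $\delta\in(0,\eta)$, and let $v_k^A\weak u$ and $v_k^B\weak u$ in $W^{1,p}(\Omega)$ be recovery sequences attaining $\Phi''_\eta(u,A',A')$ and $\Phi''_\eta(u,B',B')$ respectively. By the compact embedding $W^{1,p}(\Omega)\hookrightarrow L^q(\Omega)$, $v_k^A-v_k^B\to 0$ in $L^q(\Omega)$. Choose intermediate sets $A_0\subset\subset A'$, $B_0\subset\subset B'$ with $C\subset A_0\cup B_0$, and for each $N\in\N$ a family of cut-offs $\varphi_1,\dots,\varphi_N\in C^\infty_c(\R^d)$, each equal to $1$ on $A_0$, supported in pairwise disjoint nested shells between $A_0$ and $A'$, with $\|\nabla\varphi_j\|_\infty\le CN$. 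Set $u_k^j:=\varphi_j v_k^A+(1-\varphi_j)v_k^B$; since $v_k^A-v_k^B\to 0$ in $L^q$ and both sequences are bounded in $W^{1,p}$, $u_k^j\weak u$ in $W^{1,p}(\Omega)$.

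\medskip
\noindent\textbf{Estimate and limits.} I would split $\Phi^\delta_k(u_k^j,C,C)=F^\delta_k(u_k^j,C,C)+G_k(u_k^j,C)$ by geography. On the portion of $(C\times C)\cap\Delta_\delta$ inside $A_0\times A_0$ one has $u_k^j\equiv v_k^A$ and the integrand coincides with that of $v_k^A$, contributing at most $F^\eta_k(v_k^A,A',A')$; on the portion outside the $\varphi_j$-shell, lying in $B_0\times B_0$, one has $u_k^j\equiv v_k^B$, bounded by $F^\eta_k(v_k^B,B',B')$. The same geographic split handles $G_k$. The shell remainder is the delicate part. I use \eqref{doubleboundtf}--\eqref{growthbk} to dominate
$$f_k(x,y,u_k^j(x),u_k^j(y))\le \frac{b_k(x,y)}{|x-y|^p}\,|u_k^j(x)-u_k^j(y)|^p+c_\Delta(\alpha(x)+|u_k^j(x)|^q)(\alpha(y)+|u_k^j(y)|^q),$$
and expand
$$u_k^j(x)-u_k^j(y)=\varphi_j(x)\bigl(v_k^A(x)-v_k^A(y)\bigr)+(1-\varphi_j(x))\bigl(v_k^B(x)-v_k^B(y)\bigr)+\bigl(\varphi_j(x)-\varphi_j(y)\bigr)\bigl(v_k^A(y)-v_k^B(y)\bigr).$$
The first two terms, after integration against $b_k/|x-y|^p$, yield contributions of the same type as those bounded in the proof of Proposition \ref{estab} via \eqref{stima-incr} and \eqref{cibi} on the $O(\delta^d)$-measure shell, and vanish in the relevant limit. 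The third term, using $|\varphi_j(x)-\varphi_j(y)|\le \|\nabla\varphi_j\|_\infty|x-y|$ and the row bound \eqref{cibi}, is bounded by $CN^p c_b\|v_k^A-v_k^B\|_{L^p}^p$, which vanishes as $k\to\infty$ for fixed $N$. An analogous but simpler product-rule computation for $G_k$ using \eqref{usual}, combined with averaging over $j\in\{1,\dots,N\}$, selects an index $j=j(k)$ whose total shell error tends to zero. Sending $k\to\infty$, then $N\to\infty$, then $\delta\to 0$, and finally $\eta\to 0$ in that order yields the desired inequality.

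\medskip
\noindent\textbf{Main obstacle.} The nonlocal shell estimate is the delicate point. Unlike in the purely local De Giorgi--Letta lemma, the coupling $f_k(x,y,s,t)$ does not split cleanly when $u_k^j$ is a convex combination of $v_k^A$ and $v_k^B$, so one cannot directly dominate $f_k(x,y,u_k^j(x),u_k^j(y))$ by the same integrand at $v_k^A$ or at $v_k^B$. The structural hypotheses \eqref{growthbk} (the $p$-growth of $\tilde f_k$) and \eqref{cibi} (the row bound on $b_k$), together with the $\delta^d$-estimate \eqref{stima-incr}, are precisely what allows the $O(N^p)$ blow-up coming from the gradient of the cut-off to be absorbed against the $L^p$-smallness of $v_k^A-v_k^B$, so that the shell error vanishes in the limits taken in the correct order.
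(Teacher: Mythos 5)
Your gluing construction is in the right spirit: combining recovery sequences through a family of cutoffs and extracting a good index by averaging is precisely the mechanism the paper uses in Lemma~\ref{lemmasub-0}. The gap is in the treatment of the first two shell terms. After you dominate $f_k$ by the upper half of \eqref{doubleboundtf} together with \eqref{growthbk}, these become integrals of the form
\begin{equation*}
\int_{\text{shell}\cap\Delta_\delta}\frac{b_k(x,y)}{|x-y|^p}\,|v_k^A(x)-v_k^A(y)|^p\dxy
\end{equation*}
(and the analogue for $v_k^B$), and you claim they ``vanish in the relevant limit'' by appealing to \eqref{stima-incr} and \eqref{cibi}. That is false. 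Estimate \eqref{stima-incr} controls the difference quotient against the Lebesgue measure, not against the weight $b_k$, which is allowed to concentrate on the diagonal: for $b_k(x,y)=k^d\psi(k(x-y))$ the integral above remains of order $\int_{\text{shell}}|\nabla v_k^A|^p\dx$ for every $\delta>0$ and every $N$, and does not tend to zero as $k\to\infty$ (indeed, in the proof of Proposition~\ref{estab} the analogous quantities do not vanish; they are exactly what produces the upper bound $c_2|\nabla u|^p$). Worse, without further input these terms are not even controlled by the actual energies $F^\delta_k(v_k^A,\cdot,\cdot)$, because \eqref{growthbk} is only an upper bound: the proxy $b_k/|x-y|^p$ can be much larger than $\widetilde f_k$, and hence than $f_k$.

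The missing ingredient is the convexity of $\widetilde f_k$, hypothesis \eqref{convextf}, together with the lower-bound half of \eqref{doubleboundtf}. Rather than replacing $f_k$ wholesale by $b_k/|x-y|^p$, the paper applies convexity of $\widetilde f_k(x,y,\cdot)$ to the splitting $u_k^j(x)-u_k^j(y)=\tfrac12(2a)+\tfrac12(2b)$, where $a$ collects your first two pieces and $b$ is the cutoff cross term, and then uses
\begin{equation*}
\widetilde f_k\bigl(x,y,2(v_k^A(x)-v_k^A(y))\bigr)\le 2c_\Delta\, f_k\bigl(x,y,v_k^A(x),v_k^A(y)\bigr)
\end{equation*}
from \eqref{doubleboundtf}, and similarly for $v_k^B$. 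This converts the first two shell contributions into multiples of the true energies $F^\delta_k(v_k^A,\cdot,\cdot)$ and $F^\delta_k(v_k^B,\cdot,\cdot)$; only the cross term is estimated through $b_k/|x-y|^p$ via \eqref{growthbk}, which is harmless because it is then annihilated by $\|v_k^A-v_k^B\|_{L^p}\to0$. Summing over the $N$ disjoint shells and averaging then yields an index $j(k)$ whose shell error is at most $\tfrac{C}{N}\bigl(F^\delta_k(v_k^A,\cdot,\cdot)+F^\delta_k(v_k^B,\cdot,\cdot)\bigr)$ plus a vanishing remainder, so the inequality you can actually reach is $(1+\sigma)$-multiplicative with $\sigma\sim 1/N$, and \eqref{subad=Phi} is recovered only after letting $\sigma\to 0$ at the very end, exactly as in the proof of Lemma~\ref{lemmasub}. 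As written, your proof leaves an uncontrolled shell contribution and the argument does not close.
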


By \eqref{PhiDelta} the claim of Proposition \ref{subaddit} is a consequence of the following lemma.

\begin{lemma}\label{lemmasub}
Let $u\in W^{1,p}(\Omega)$ and let $A,A',B\in\AA$ with $A'\subset\subset A$. Then we have
\begin{equation}\label{subad}
\Phi''_\Delta(u, A'\cup B)\le \Phi''_\Delta(u, A)+\Phi''_\Delta(u, B).
\end{equation}
\end{lemma}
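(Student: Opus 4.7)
The plan is to run the classical fundamental-estimate cut-off argument of $\Gamma$-convergence, adapted to accommodate the nonlocal part. First, I fix $\delta, \e>0$, introduce nested open sets $A' \subset\subset A_0 \subset\subset A_1 \subset\subset \cdots \subset\subset A_N \subset\subset A$, and choose cut-off functions $\varphi_i \in C^\infty_c(\Omega)$ with $\varphi_i=1$ on $A_{i-1}$, $\varphi_i=0$ on $\R^d\setminus A_i$, and $\|\nabla\varphi_i\|_\infty$ uniformly bounded in $i$. By the definition of the $\Gamma$-$\limsup$, I pick recovery sequences $u_k, v_k \weak u$ in $W^{1,p}(\Omega)$ with $\limsup_k \Phi^\delta_k(u_k,A,A)\le \Phi''_\delta(u,A,A)+\e$ and the analogous bound for $v_k$ on $B$. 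The coercivity in \eqref{usual} produces a uniform $W^{1,p}$-bound, and Rellich--Kondrachov (using $q<p^*$) yields strong $L^q$-convergence of $u_k, v_k$ to $u$, so that $\eta_k:=u_k-v_k\to 0$ strongly in $L^q(\Omega)$.

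Next I would define the glued sequence $w_k^i := \varphi_i u_k + (1-\varphi_i) v_k$, which still converges weakly to $u$ in $W^{1,p}(\Omega)$, and split $\Phi^\delta_k(w_k^i, A'\cup B, A'\cup B)$ into bulk and annular transition contributions. On $A_{i-1}$ the function $w_k^i$ coincides with $u_k$ (and on $\Omega\setminus A_i$ with $v_k$), so the bulk local and nonlocal contributions are dominated by $G_k(u_k,A)+G_k(v_k,B)$ and by the corresponding bulk pieces of $F^\delta_k(u_k,A,A)+F^\delta_k(v_k,B,B)$. On the transition annulus $A_i\setminus A_{i-1}$ the local term is controlled via the $p$-growth $g_k(x,\nabla w_k^i)\le C(|\nabla u_k|^p+|\nabla v_k|^p+a(x)+|\nabla\varphi_i|^p|\eta_k|^p)$, whose last summand vanishes in $L^1$ as $k\to\infty$ since $\eta_k\to 0$ in $L^p(\Omega)$.

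For the nonlocal transition I use the identity $w_k^i(x)-w_k^i(y)=\varphi_i(x)(u_k(x)-u_k(y))+(1-\varphi_i(x))(v_k(x)-v_k(y))+(\varphi_i(x)-\varphi_i(y))\eta_k(y)$, combined with the upper bound on $f_k$ from \eqref{doubleboundtf}, the growth $\tilde f_k(x,y,\tau)\le b_k(x,y)|x-y|^{-p}|\tau|^p$ from \eqref{growthbk}, and the convexity \eqref{convextf}. The Lipschitz bound $|\varphi_i(x)-\varphi_i(y)|\le \|\nabla\varphi_i\|_\infty|x-y|$ cancels the singular factor $|x-y|^{-p}$ in the third summand, whose nonlocal contribution is thus bounded by $C\|\nabla\varphi_i\|_\infty^p \int_\Omega\int_\Omega b_k(x,y)|\eta_k(y)|^p\dxy\le C c_b \|\eta_k\|_{L^p}^p\to 0$ by \eqref{cibi}. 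The lower-order $(\alpha+|w_k^i|^q)$-piece integrated over $\Delta_\delta$ is $O(\delta^d)$ uniformly in $k$ by the $L^q$-bound. The first two summands, treated via a three-term convex combination of $\tilde f_k$, together with the annular local term, contribute an error that, summed over the disjoint annuli $i=1,\ldots,N$, telescopes to at most $C/N$ times the uniformly bounded quantity $\sup_k \bigl[\int_A(|\nabla u_k|^p+|\nabla v_k|^p+a)\dx+F^\delta_k(u_k,A,A)+F^\delta_k(v_k,B,B)\bigr]$.

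To conclude, I would select $i^*=i^*(k)$ minimising the annular error and pass to a subsequence along which $i^*$ is constant; then let $k\to\infty$, then $N\to\infty$, then $\e\to 0$, and finally $\delta\to 0$ (exploiting the monotonicity of $\Phi''_\delta$ in $\delta$) to obtain \eqref{subad}. The main obstacle will be the middle piece of the nonlocal transition, where both $\varphi_i(x)$ and $1-\varphi_i(x)$ are positive: comparing $\tilde f_k$ at the convex combination $\varphi_i(x)(u_k(x)-u_k(y))+(1-\varphi_i(x))(v_k(x)-v_k(y))$ to $\tilde f_k$ at $u_k(x)-u_k(y)$ and $v_k(x)-v_k(y)$ separately requires a careful three-term convex combination with a small parameter, together with the $p$-growth \eqref{growthbk}; the asymptotic invariance hypothesis \eqref{invariance} enters here, to absorb the residual spatially varying perturbation $(1-\varphi_i(y))\eta_k(y)$ by converting it, up to a controlled error, into a constant shift of the arguments of $f_k$.
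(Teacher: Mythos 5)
Your overall strategy is the standard fundamental-estimate argument and matches the paper's proof (which, for technical reuse in Corollary~\ref{corcon}, is packaged as a separate gluing statement, Lemma~\ref{lemmasub-0}): nested cut-offs, the identity
$w_k^i(x)-w_k^i(y)=\varphi_i(x)(u_k(x)-u_k(y))+(1-\varphi_i(x))(v_k(x)-v_k(y))+(\varphi_i(x)-\varphi_i(y))\eta_k(y)$,
convexity of $\tilde f_k$, Lipschitz cancellation of $|x-y|^{-p}$ via \eqref{growthbk}, the uniform bound \eqref{cibi} together with strong $L^p$-convergence of $\eta_k$, and finally averaging over the annular index to make the transition error $O(1/N)$. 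So the steps you list are correct and are essentially those of the paper.

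The one genuine misconception is in your final paragraph: the asymptotic-invariance hypothesis \eqref{invariance} is \emph{not} used here, and there is no residual ``spatially varying perturbation $(1-\varphi_i(y))\eta_k(y)$'' left to absorb. The exact algebraic identity above already isolates all of the cut-off dependence in the single term $(\varphi_i(x)-\varphi_i(y))\eta_k(y)$, and that term is entirely disposed of by $|\varphi_i(x)-\varphi_i(y)|\le L_m|x-y|$ together with \eqref{growthbk} and \eqref{cibi}. The remaining two summands form a \emph{bona fide} convex combination (coefficients $\varphi_i(x)$, $1-\varphi_i(x)$), so no ``small parameter'' is needed: one writes $a+b+c=\tfrac12\bigl(2(a+b)\bigr)+\tfrac12(2c)$, applies convexity, and then uses the two-sided bound \eqref{doubleboundtf} (the factor~$2$ in the argument is precisely what makes $\tilde f_k(2(a+b))$ comparable to $c_\Delta f_k$ evaluated on $v_k$ and $w_k$). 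The price is a multiplicative constant $c_\Delta$ in front of the annular contribution, which is why the gluing estimate acquires a $(1+\sigma)$ factor rather than being clean; this is absorbed by taking the number of annuli $m$ large. Hypothesis \eqref{invariance} is reserved for Proposition~\ref{inv-co}, where an actual constant shift $u\mapsto u+r$ is imposed and must be pushed through $f_k$. If you dropped the last paragraph of your proposal, the argument would be correct.

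One smaller point: you propose to ``pass to a subsequence along which $i^*$ is constant.'' This is unnecessary. The pigeonhole choice of $i_k$ yields, for each $k$, an annular error bounded by $\tfrac{1}{m-2}$ times the total, and one can pass to $\limsup_{k\to\infty}$ directly with $i_k$ varying; no constant-index subsequence is needed.
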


Lemma \ref{lemmasub} will be a consequence of the following general result, which will be used to join recovery sequences.

\begin{lemma}\label{lemmasub-0}
Let $u\in W^{1,p}(\Omega)$ and let $v_k$ and $w_k$ be sequences weakly converging to $u$ in $W^{1,p}(\Omega)$. Let $A,A',A'',B\in\AA$ with $A'\subset\subset A''\subset\subset A$. Then for all $\sigma>0$ there exist $\delta_\sigma>0$  and a sequence  $u_k$ converging to $u$ in $W^{1,p}(\Omega)$ such that 
$u_k=v_k$ on $A'$, $u_k=w_k$ on $\Omega\setminus A''$, and 
\begin{equation}\label{subad-0}
\limsup_{k\to+\infty}\Phi_k^\delta(u_k, A'\cup B, A'\cup B)\le (1+\sigma)\limsup_{k\to+\infty}\bigl(\Phi_k^\delta(v_k, A,A)+\Phi_k^\delta(w_k, B,B)\bigr) + \sigma
\end{equation}
 for every $0<\delta\le\delta_\sigma$.
\end{lemma}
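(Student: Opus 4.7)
The plan is the classical De~Giorgi fundamental estimate by annular slicing, adapted to accommodate the diagonal-concentrating nonlocal term. Set $E:=A'\cup B$ and $\eta:=\mathrm{dist}(A',\Omega\setminus A'')>0$. For a large integer $N=N(\sigma)$ to be chosen at the end I will pick nested open sets $A'\subset B_0\subset\subset B_1\subset\subset\cdots\subset\subset B_N\subset A''$ with $\mathrm{dist}(B_{j-1},\Omega\setminus B_j)\ge\eta/(2N)$ and cut-off functions $\varphi_j$ between $B_{j-1}$ and $B_j$ with $\|\nabla\varphi_j\|_\infty\le CN/\eta$; the transition shells $T_j:=B_j\setminus\overline{B_{j-1}}$ are pairwise disjoint. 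Set $u_k^j:=\varphi_j v_k+(1-\varphi_j)w_k$, so that $u_k^j=v_k$ on $A'$, $u_k^j=w_k$ on $\Omega\setminus A''$, and $u_k^j\to u$ in $W^{1,p}(\Omega)$. Finally set $\delta_\sigma:=\eta/(2N)$: for every $\delta\le\delta_\sigma$, a pair $(x,y)\in\Delta_\delta$ with $x\in B_{j-1}$ forces $y\in B_j$, so nonlocal interactions cannot skip a shell.

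For the local part, $G_k(u_k^j,E)\le G_k(v_k,A)+G_k(w_k,B)+I^{\mathrm{loc}}_j(k)$, where the transition integral is controlled via
$$
|\nabla u_k^j|^p\le(1+\sigma)\bigl[\varphi_j|\nabla v_k|^p+(1-\varphi_j)|\nabla w_k|^p\bigr]+C_\sigma(CN/\eta)^p|v_k-w_k|^p,
$$
obtained from the elementary inequality $|\xi+\zeta|^p\le(1+\sigma)|\xi|^p+C_\sigma|\zeta|^p$, convexity of $|\cdot|^p$, and \eqref{usual}. For the nonlocal part, decompose $(E\times E)\cap\Delta_\delta$ through the partition $\{B_{j-1},T_j,\Omega\setminus B_j\}$; by the choice of $\delta_\sigma$ only the diagonal cells $(B_{j-1}\cap E)^2$, $(E\setminus B_j)^2$ and a transition region intersecting $T_j$ can contribute. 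The first two provide at most $F_k^\delta(v_k,A,A)$ and $F_k^\delta(w_k,B,B)$; on the transition region I apply \eqref{doubleboundtf} and split
$$
u_k^j(x)-u_k^j(y)=\varphi_j(x)(v_k(x)-v_k(y))+(1-\varphi_j(x))(w_k(x)-w_k(y))+(\varphi_j(x)-\varphi_j(y))(v_k(y)-w_k(y)),
$$
combining convexity of $\tf_k(x,y,\cdot)$ with \eqref{growthbk} and the same $(1+\sigma)/C_\sigma$ split. The crucial cancellation is $|\varphi_j(x)-\varphi_j(y)|\le\|\nabla\varphi_j\|_\infty|x-y|$, which kills the singular $|x-y|^{-p}$ weight in the error piece, leaving a term bounded by $C_\sigma(N/\eta)^pb_k(x,y)|v_k(y)-w_k(y)|^p$, whose $x$-integral is controlled by \eqref{cibi}.

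Summing over $j=1,\ldots,N$ and using disjointness of the shells $T_j$, the ``main'' bulk of the transition estimate is uniformly bounded in $k$ (polynomial tails are controlled by $q<p^\ast$ and Sobolev embedding, and the $\tf_k$-envelope by \eqref{cibi}), while the $\|v_k-w_k\|_{L^p}^p$ remainder vanishes as $k\to\infty$ by Rellich--Kondrachov. A pigeonhole choice of $j^\ast=j^\ast(k)\in\{1,\ldots,N\}$ yields $u_k:=u_k^{j^\ast(k)}$ whose transition contribution is at most $1/N$ times the $j$-sum; combining with the $(1+\sigma)$-split of the main piece gives
$$
\Phi_k^\delta(u_k,E,E)\le(1+\sigma)\bigl[\Phi_k^\delta(v_k,A,A)+\Phi_k^\delta(w_k,B,B)\bigr]+\frac{C}{N}+C_\sigma(N/\eta)^p\|v_k-w_k\|_{L^p(\Omega)}^p+o_k(1).
$$
Passing to $\limsup_k$ and choosing $N$ so large that $C/N<\sigma$ yields \eqref{subad-0}.

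I expect the main obstacle to be the control of the nonlocal transition piece: because of the singular factor $|x-y|^{-p}$ in \eqref{growthbk}, the slicing does not reduce to a pure $L^p$-gradient estimate. One has to balance the Lipschitz bound on $\varphi_j$ (which supplies one power of $|x-y|$) against the potentially diagonal-concentrating mass of $b_k$, and the uniform $L^1$-bound \eqref{cibi} on $b_k$ in one variable is exactly what closes the balance. The restriction $\delta\le\delta_\sigma\sim 1/N$ is what prevents direct $T_j$-to-$T_{j'}$ interactions and allows the slicing to proceed simultaneously in the local and nonlocal contributions.
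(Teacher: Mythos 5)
Your proof is correct and follows the paper's approach: a De~Giorgi slicing through nested shells with cut-offs, the crucial observation that the Lipschitz bound $|\varphi_j(x)-\varphi_j(y)|\le L|x-y|$ cancels the singular weight $|x-y|^{-p}$ from \eqref{growthbk}, a pigeonhole over the shell index, and Rellich to dispose of the $\|v_k-w_k\|_{L^p}$ remainder. The only cosmetic difference is that you split pointwise via the $(1+\sigma)$-Young inequality, whereas the paper exploits the convexity of $\tf_k$ with halved weights and the constant $c_\Delta$ from \eqref{doubleboundtf}, recovering the factor $(1+\sigma)$ only after the pigeonhole averaging over the $m$ shells.
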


\begin{proof}
Given $m\in \mathbb N$, let $A_0,\ldots, A_m\in\AA$ with
$A'=A_0\subset\subset A_1\subset\subset A_2\subset\subset \cdots \subset\subset A_m=A''$, and let $\varphi_i $ be a cut-off function between $A_{i-1}$ and $A_i$. We define
$u^i_k=\varphi_i v_k+(1-\varphi_{i})w_k$. 

We now estimate $F^\delta_k(u^i_k, A'\cup B, A'\cup B)$. Note that for all $i\in\{1,\ldots, m\}$, after setting
$$
D^i_1:= (A'\cup B)\cap A_{i-1},\qquad D^i_2:= B\cap (A_i\setminus A_{i-1}),\qquad 
D^i_3:= B \setminus A_i,
$$
we can write
$A'\cup B= D^i_1\cup D^i_2\cup D^i_3$;
hence, we can split
$$
(A'\cup B)\times (A'\cup B)=\bigcup_{j=1}^3\bigcup_{\ell=1}^3 (D^i_j\times D^i_\ell)
$$
and separately examine the integrals on each of the sets on the right-hand side.

We have
\begin{eqnarray}\label{D11}\nonumber
\int_{(D^i_1\times D^i_1)\cap\Delta_\delta} \hskip-1.2cm f_k(x,y,u^i_k(x), u^i_k(y))\dxy
&\le&\int_{(A_{i-1}\times A_{i-1})\cap\Delta_\delta}\hskip-1.8cm f_k(x,y,v_k(x), v_k(y))\dxy\\
&\le&\int_{(A\times A)\cap\Delta_\delta} \hskip-1cm f_k(x,y,v_k(x), v_k(y))\dxy.
\end{eqnarray}
In the same way
\begin{eqnarray}\label{D33}\nonumber
\int_{(D^i_3\times D^i_3)\cap\Delta_\delta} \hskip-1.2cmf_k(x,y,u^i_k(x), u^i_k(y))\dxy
&=&\int_{((B\setminus A_i)\times(B\setminus A_i))\cap\Delta_\delta} \hskip-2.2cmf_k(x,y,w_k(x), w_k(y))\dxy\\
&\le&\int_{(B\times B)\cap\Delta_\delta}  \hskip-1cm f_k(x,y,w_k(x), w_k(y))\dxy.
\end{eqnarray}

In order to estimate the other pairs $(j,\ell)$, it is convenient to note that
\begin{eqnarray*} 
&\hskip-3cm u^i_k(x)- u^i_k(y)=\varphi_i(x) (v_k(x)-v_k(y))
+(1-\varphi_{i}(x))(w_k(x)-w_k(y))\\
&\hskip2cm 
+\varphi_i(x)v_k(y)
+(1-\varphi_i(x))w_k(y)-\varphi_i(y) v_k(y)-(1-\varphi_{i}(y))w_k(y)
\\
&=\varphi_i(x) (v_k(x)-v_k(y))
+(1-\varphi_{i}(x))(w_k(x)-w_k(y))
+(\varphi_i(x)-\varphi_i(y)) (v_k(y)-w_k(y)).
\end{eqnarray*} 
Note that by the convexity of $\widetilde f_k$ we have 
\begin{eqnarray}\label{Djell-co}
\nonumber
\widetilde f_k(x,y,u^i_k(x)- u^i_k(y))
&\le&{1\over 2}\widetilde f_k(x,y,2(v_k(x)-v_k(y)))+{1\over 2}\widetilde f_k(x,y,2(w_k(x)-w_k(y)))\\
&&+{1\over 2}\widetilde f_k(x,y, 2
(\varphi_i(x)-\varphi_i(y)) (v_k(y)-w_k(y))).
\end{eqnarray}
In view of \eqref{doubleboundtf} we also define 
\begin{eqnarray}\label{zetak}
&&z_k(x):= (\alpha(x)+|v_k(x)|^q+|w_k(x)|^q),
\\
\label{r12}
&&r^{ik}_{j\ell}(\delta):= c_\Delta\int_{(D^i_j\times D^i_\ell)\cap\Delta_\delta} z_k(x) z_k(y)\dxy.
\end{eqnarray}
Then, using \eqref{doubleboundtf}, \eqref{growthbk}, and \eqref{Djell-co}, we get
%
\begin{eqnarray}\label{Djell}
\nonumber
&&\int_{(D^i_j\times D^i_\ell)\cap\Delta_\delta}f_k(x,y,u^i_k(x), u^i_k(y))\dxy\\ \nonumber
\\ \nonumber
&\le&{1\over 2}\int_{(D^i_j\times D^i_\ell)\cap\Delta_\delta}\widetilde f_k(x,y,2(v_k(x)-v_k(y)))\dxy\\
\nonumber
&&+{1\over 2}\int_{(D^i_j\times D^i_\ell)\cap\Delta_\delta}\widetilde f_k(x,y,2(w_k(x)-w_k(y)))\dxy\\ \nonumber
&&+{1\over 2}\int_{(D^i_j\times D^i_\ell)\cap\Delta_\delta}\widetilde f_k(x,y, 2
(\varphi_i(x)-\varphi_i(y)) (v_k(y)-w_k(y))))\dxy
+r^{ik}_{j\ell}(\delta)
\\ \nonumber
&\le&{1\over 2} c_\Delta \int_{(D^i_j\times D^i_\ell)\cap\Delta_\delta}f_k(x,y,v_k(x),v_k(y))\dxy\\
\nonumber
&&+{1\over 2} c_\Delta \int_{(D^i_j\times D^i_\ell)\cap\Delta_\delta}f_k(x,y,w_k(x),w_k(y))\dxy\\
&&+C_m \int_{(D^i_j\times D^i_\ell)\cap\Delta_\delta}b_k(x,y) 
|v_k(y)-w_k(y)|^p\dxy
+r^{ik}_{j\ell}(\delta),
\end{eqnarray}
where  $L_m$ is a common Lipschitz constant for all $\varphi_i$ and $C_m:=2^{p-1}L_m^p$. 

 We now remark that there exists $\delta_0=\delta_0(A_1\ldots,A_m)>0$ such that for $0<\delta\le\delta_0$ 
{\color{blue}}
for all $i$ we have
\begin{equation}\label{dis-empt}
(D^i_1\times D^i_3)\cap \Delta_\delta=(D^i_3\times D^i_1)\cap \Delta_\delta=\emptyset,
\end{equation}
and for all $\ell\in\{1,2,3\}$ and $i\in\{2,\ldots, m-1\}$ 
$$
(D^i_2\times D^i_\ell)\cap \Delta_\delta\subset( D^i_2\times D^{}_2) \cap \Delta_\delta.
$$
where 
$ D_2= (A''\setminus A')\cap B$.

We then deduce that
\begin{eqnarray*}
&&\sum_{i=2}^{m-1}\sum_{\ell=1}^3 \int_{(D^i_2\times D^i_\ell)\cap\Delta_\delta}f_k(x,y,u^i_k(x), u^i_k(y))\dxy\\
&\le& \sum_{i=1}^{m}\biggl(
 {3\over 2} c_\Delta \int_{(D^i_2\times D^{}_2) \cap \Delta_\delta}(f_k(x,y,v_k(x),v_k(y))+f_k(x,y,w_k(x),w_k(y))
 \dxy\\
&&+3 C_m \int_{(D^i_2\times D^{}_2) \cap \Delta_\delta}b_k(x,y) 
|v_k(y)-w_k(y)|^p\dxy+\int_{(D^i_2\times D^{}_2)\cap\Delta_\delta} z_k(x)z_k(y)\dxy\biggr)\\
&=& 
 {3\over 2} c_\Delta \int_{(D^{}_2\times D^{}_2) \cap \Delta_\delta}(f_k(x,y,v_k(x),v_k(y))+f_k(x,y,w_k(x),w_k(y)))\dxy\\
&&+3C_m \int_{(D^{}_2\times D^{}_2)  \cap \Delta_\delta}b_k(x,y) 
|v_k(y)-w_k(y)|^p\dxy+\int_{(D^{}_2\times D^{}_2)\cap\Delta_\delta} z_k(z)z_k(y)\dxy.
\end{eqnarray*}
Similarly, for all $j\in\{1,2,3\}$ and $i\in\{2,\ldots, m-1\}$ 
$$
(D^i_j\times D^i_2)\cap \Delta_\delta
\subset \bigl(D^{}_2\times D^i_2\bigr)  \cap \Delta_\delta,
$$
and the same estimate holds for 
$$\sum_{i=2}^{m-1}\sum_{j=1}^3 \int_{(D^i_j\times D^i_{2})\cap\Delta_\delta}f_k(x,y,u^i_k(x), u^i_k(y))\dx.$$

On the other hand, a well-known argument (see \cite[Theorem 19.1]{DM}) gives the existence of a constant $\widehat c$ independent of $m$, and a constant $\widehat c_m$, depending on $m$, such that
\begin{eqnarray*}
\sum_{i=1}^m \int_{D^i_2} g_k(x,\nabla u^i_k(x))\dx&\le &
\widehat c\int_{D^{}_2} (g_k(x, \nabla v_k(x))+ g_k(x, \nabla w_k(x)))\dx\\
&&+
\widehat c\int_{D^{}_2}a(x)\dx+\widehat c_m \int_{D^{}_2}|v_k(x)-w_k(x)|^p\dx.
\end{eqnarray*}

We deduce that there exists $i=i_k\in\{2,\ldots, m-1\}$ such that
\begin{eqnarray*}\label{stimaD2}\nonumber
&&\sum_{\ell=1}^3 \int_{(D^i_2\times D^i_\ell)\cap\Delta_\delta}f_k(x,y,u^i_k(x), u^i_k(y))\dxy\\ \nonumber
&&+ \sum_{j=1}^3 \int_{(D^i_j\times D^i_{2})\cap\Delta_\delta}f_k(x,y,u^i_k(x), u^i_k(y))\dxy+ \int_{D^i_2} g_k(x,\nabla u^i_k(x))\dx \end{eqnarray*}
\begin{eqnarray}\label{stimaD2}\nonumber
&\le &{2\over m-2} \biggl( {3\over 2} c_\Delta \int_{(A\times A) \cap \Delta_\delta}f_k(x,y,v_k(x),v_k(y))\dxy\\ \nonumber
&&+{3\over 2} c_\Delta \int_{(B\times B)  \cap \Delta_\delta}f_k(x,y,w_k(x),w_k(y))\dxy+\int_{(D^{}_2\times D^{}_2) \cap\Delta_\delta} z_k(x)z_k(y)\dxy\\ \nonumber
&&+\widehat c\int_{D^{}_2} (g_k(x, \nabla v_k(x))+ g_k(x, \nabla w_k(x)))\dx\\
&&+
\widehat c\int_{D^{}_2}a(x)\dx+\Bigl(3 C_m c_b+\widehat c_m\Bigr) \int_{A''\cap B}|v_k(x)-w_k(x)|^p\dx\biggr),
\end{eqnarray}
where $c_b$ is the constant in \eqref{cibi}.

We fix $m\in\mathbb N$ such that
\begin{equation}\label{aaa1}
\frac{3 c_\Delta+2\widehat c+2C_u}{m-2}\le \sigma,
\end{equation}
where 
$$ C_u:=\Bigl(\int_{D^{}_2}(\alpha(x)+2|u(x)|^q)\dx\Bigr)^2+\widehat c\int_{D^{}_2} a(x)\dx,
$$
and define $u_k:=u^{i_k}_k$ and $\delta_\sigma=\delta_0(A_1\ldots A_m)$ such that  \eqref{dis-empt} holds. 

Then, by \eqref{D11}, \eqref{D33} and the analogous estimates for the terms involving $g_k$, if $\delta\le \delta_\sigma$ then  \eqref{dis-empt} gives
\begin{eqnarray}\label{stimafin}\nonumber
\Phi^\delta_k(u_k,A'\cup B,A'\cup B)&\le&\Phi^\delta_k(v_k,A,A)+\Phi^\delta_k(w_k,B,B)\\ \nonumber
&&\hskip-3cm +\sum_{\ell=1}^3 \int_{(D^i_2\times D^i_\ell)\cap\Delta_\delta}\hskip-1cm f_k(x,y,u^i_k(x), u^i_k(y))\dxy\\ && \hskip-3cm + \sum_{j=1}^3 \int_{(D^i_j\times D^i_{2})\cap\Delta_\delta}\hskip-1cm f_k(x,y,u^i_k(x), u^i_k(y))\dxy+\int_{D^i_2} g_k(x,\nabla u^i_k(x))\dx.
\end{eqnarray}

Since $v_k$ and $w_k$ tend to $u$ strongly in $L^p(D^{}_2)$ and in $L^q(D^{}_2)$ by Rellich's theorem, we have 
$$
\lim_{k\to+\infty} \int_{D^{}_2}|v_k(y)-w_k(y)|^p\dy=0
$$
and, by \eqref{zetak}, 
\begin{eqnarray*}
&&\lim_{k\to+\infty}
\int_{(D^{}_2 \times D^{}_2) \cap\Delta_\delta}  z_k(x)z_k(y)\dxy+
\widehat c\int_{D^{}_2}a(x)\dx\le C_u.
\end{eqnarray*}

Passing to the limit as  $k\to+\infty$ in \eqref{stimafin} and using \eqref{stimaD2}
we obtain \eqref{subad-0}. Note that this inequality is proved only for $\delta\le\delta_\sigma$ such that  \eqref{dis-empt} holds.
\end{proof}

\begin{proof}[Proof of Lemma \rm\ref{lemmasub}] It suffices to prove that for all $\sigma>0$ there exists 
$\delta_\sigma>0$ such that 
\begin{equation}\label{subad-d}
\Phi''_\delta(u, A'\cup B, A'\cup B)\le (1+\sigma)(\Phi''_\delta(u, A,A)+\Phi''_\delta(u, B,B)) + \sigma
\end{equation}
 for every $0<\delta\le\delta_\sigma$. To that end it suffices to apply Lemma \ref{lemmasub-0} choosing recovery sequences of $\Phi''_\delta(u, A,A)$ and $\Phi''_\delta(u, B,B)$  as $v_k$ and $w_k$, respectively. 
\end{proof}

\begin{corollary}\label{coro}
Let $u\in W^{1,p}(\Omega)$ and $A\in\AA$. Then 
\begin{equation}
\Phi'_\Delta(u, A)=\Phi''_\Delta(u, A)=\Phi_\Delta(u, A).
\end{equation}
\end{corollary}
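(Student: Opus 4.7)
The plan is to reduce the corollary to the single nontrivial inequality $\Phi''_\Delta(u,A)\le\Phi_\Delta(u,A)$, i.e., to the inner regularity of $A\mapsto\Phi''_\Delta(u,A)$. The chain $\Phi_\Delta(u,A)\le\Phi'_\Delta(u,A)\le\Phi''_\Delta(u,A)$ is immediate from the definitions: the second inequality compares a $\Gamma$-liminf with a $\Gamma$-limsup at each fixed $\delta>0$ and then passes to the limit $\delta\to0^+$; the first follows from the monotonicity of $\Phi'_\Delta(u,\cdot)$, noted just before \eqref{PhiDelta}, together with the supremum defining $\Phi_\Delta$.

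For the reverse inequality, I would fix $\e>0$. Using that $\int_A(a+c_2|\nabla u|^p)\,dx<+\infty$ together with the absolute continuity of this integral with respect to Lebesgue measure, I would choose open sets $B'_0\subset\subset B_0\subset\subset B_1\subset\subset A$ with
\[
\int_{A\setminus\overline{B'_0}}(a(x)+c_2|\nabla u(x)|^p)\,dx<\e,
\]
and set $D:=A\setminus\overline{B'_0}$, which is open in $\Omega$ (so $D\in\AA$). Since $\overline{B'_0}\subseteq B_0$, one has $B_0\cup D=A$, and by construction $\int_D(a+c_2|\nabla u|^p)\,dx<\e$.

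The key step is then to apply Lemma \ref{lemmasub} with $B_1$, $B_0$, $D$ playing the roles of $A$, $A'$, $B$ respectively (legitimate because $B_0\subset\subset B_1$) to obtain
\[
\Phi''_\Delta(u,A)=\Phi''_\Delta(u,B_0\cup D)\le\Phi''_\Delta(u,B_1)+\Phi''_\Delta(u,D).
\]
The first term on the right is at most $\Phi_\Delta(u,A)$ by the defining supremum \eqref{PhiDelta}, since $B_1\subset\subset A$; the second is bounded by $\int_D(a+c_2|\nabla u|^p)\,dx<\e$ via Proposition \ref{estab}. Letting $\e\to 0$ yields $\Phi''_\Delta(u,A)\le\Phi_\Delta(u,A)$, closing the chain.

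The bulk of the difficulty has already been settled upstream, in Lemma \ref{lemmasub-0} (the delicate gluing of two recovery sequences on an annular region $D_2$, with the convexity hypothesis \eqref{convextf} used to control the interaction of the cut-off across the non-local part) and in its consequence Lemma \ref{lemmasub}; the present argument is essentially measure-theoretic bookkeeping, whose only subtle point is to realize that the almost-subadditivity provided by Lemma \ref{lemmasub} is exactly what is needed to decouple $A$ into a compactly contained piece $B_0$ (contribution dominated by $\Phi_\Delta(u,A)$) and a thin-shell piece $D$ along $\partial A$ (contribution controlled by the integral upper bound of Proposition \ref{estab}).
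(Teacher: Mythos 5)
Your proof is correct and follows essentially the same route as the paper: both reduce to the single inequality $\Phi''_\Delta(u,A)\le\Phi_\Delta(u,A)$, then exhaust $A$ by a compact core (your $\overline{B'_0}$, the paper's $K$), apply Lemma \ref{lemmasub} to split off a thin open shell near $\partial A$, dominate the shell term by Proposition \ref{estab}, and bound the core term by the supremum in \eqref{PhiDelta}. The only difference is notational (you build the compact set as the closure of a compactly contained open set rather than choosing it directly).
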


\begin{proof} Since by definition we have
$
\Phi_\Delta(u, A)\le\Phi'_\Delta(u, A)\le\Phi''_\Delta(u, A)$,
we only have to prove that 
\begin{equation}\label{claim6}
\Phi''_\Delta(u, A)\le\Phi_\Delta(u, A).
\end{equation}
With fixed $\e>0$ let $K$ be a compact subset of $A$ such that
\begin{equation}\label{estest}
\int_{A\setminus K}(a(x)+c_2|\nabla u(x)|^p)\dx<\e
\end{equation}
with $c_2$ as in Proposition \ref{estab}.

We fix $A', A''\in\AA$ with $K\subset A'\subset\subset A''\subset\subset A$. By Lemma \ref{lemmasub}, we have
$$
\Phi''_\Delta(u, A)\le \Phi''_\Delta(u, A'')+\Phi''_\Delta(u, A\setminus K)\le \Phi_\Delta(u, A)+\e
$$
by \eqref{estdalla} and \eqref{estest}. By the arbitrariness of $\e$ we obtain claim \eqref{claim6}.
\end{proof}

We next prove a superadditivity property.
\begin{proposition}\label{superadd}
Let $u\in W^{1,p}(\Omega)$. Let $A,B\in\AA$ with $A\cap B=\emptyset$. Then we have
\begin{equation}\label{superadd-Phi}
\Phi_\Delta(u, A\cup B)\ge \Phi_\Delta(u, A)+\Phi_\Delta(u, B).
\end{equation}
\end{proposition}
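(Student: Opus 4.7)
The plan is to use inner regularity to reduce to compactly contained subsets, exploit the fact that the non-local term $F_k^\delta$ only couples points within distance $\delta$, and then pass to the $\Gamma$-liminf. Specifically, fix $A'\in\AA$ with $A'\subset\subset A$ and $B'\in\AA$ with $B'\subset\subset B$. Since $A\cap B=\emptyset$, the inclusions $\overline{A'}\subset A$ and $\overline{B'}\subset B$ give $\overline{A'}\cap\overline{B'}\subset A\cap B=\emptyset$, so by compactness $\eta:=\mathrm{dist}(\overline{A'},\overline{B'})>0$.

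For every $0<\delta<\eta$ and every $k$ we have $(A'\times B')\cap\Delta_\delta=(B'\times A')\cap\Delta_\delta=\emptyset$, hence $F_k^\delta(v,A',B')=F_k^\delta(v,B',A')=0$ for every $v\in W^{1,p}(\Omega)$. Combined with the disjointness identity $G_k(v,A'\cup B')=G_k(v,A')+G_k(v,B')$ and the bilinearity of $(A,B)\mapsto F_k^\delta(v,A,B)$ in set arguments, this yields the pointwise splitting
\begin{equation*}
\Phi_k^\delta(v,A'\cup B',A'\cup B')=\Phi_k^\delta(v,A',A')+\Phi_k^\delta(v,B',B')
\end{equation*}
for every $v\in W^{1,p}(\Omega)$, every $k\in\N$, and every $0<\delta<\eta$.

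I then pass to the $\Gamma$-liminf. For any sequence $v_k\weak u$ weakly in $W^{1,p}(\Omega)$, the elementary inequality $\liminf_k(a_k+b_k)\ge\liminf_k a_k+\liminf_k b_k$ combined with the definition of $\Phi'_\delta$ gives
\begin{equation*}
\liminf_{k\to+\infty}\Phi_k^\delta(v_k,A'\cup B',A'\cup B')\ge \Phi'_\delta(u,A',A')+\Phi'_\delta(u,B',B').
\end{equation*}
Taking the infimum over all such sequences $v_k$ produces the lower bound $\Phi'_\delta(u,A'\cup B',A'\cup B')\ge \Phi'_\delta(u,A',A')+\Phi'_\delta(u,B',B')$ for every $\delta<\eta$. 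Letting $\delta\to 0^+$ and using \eqref{Phideltap} yields
\begin{equation*}
\Phi'_\Delta(u,A'\cup B')\ge \Phi'_\Delta(u,A')+\Phi'_\Delta(u,B').
\end{equation*}

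By Corollary \ref{coro} the same inequality holds with $\Phi_\Delta$ in place of $\Phi'_\Delta$, and since $A'\cup B'\subset\subset A\cup B$, the monotonicity of $\Phi_\Delta$ built into \eqref{PhiDelta} gives $\Phi_\Delta(u,A\cup B)\ge \Phi_\Delta(u,A'\cup B')\ge \Phi_\Delta(u,A')+\Phi_\Delta(u,B')$. Taking the supremum independently over $A'\subset\subset A$ and $B'\subset\subset B$ and invoking the inner regularity \eqref{inner-reg} on each factor yields \eqref{superadd-Phi}. The only nontrivial ingredient is the vanishing of the cross terms, which is where the disjointness hypothesis $A\cap B=\emptyset$ is essential, but once $A',B'$ are separated by positive distance the remaining steps are purely formal manipulations of $\Gamma$-liminfs and monotone limits.
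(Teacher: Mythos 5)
Your proof is correct and follows essentially the same approach as the paper's: fix $A'\subset\subset A$, $B'\subset\subset B$ with $\delta_0=\mathrm{dist}(A',B')>0$, observe that for $\delta\le\delta_0$ the set $\bigl((A'\cup B')\times(A'\cup B')\bigr)\cap\Delta_\delta$ splits into the two diagonal blocks (equivalently, your vanishing cross terms), pass to the $\Gamma$-liminf, let $\delta\to0$, and conclude via the supremum defining $\Phi_\Delta$ in \eqref{PhiDelta}. The only cosmetic difference is that you cite Corollary \ref{coro} to move from $\Phi'_\Delta$ to $\Phi_\Delta$, while the paper folds this into the definition \eqref{PhiDelta}.
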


\begin{proof} We fix $A',B'\in\AA$ with $A'\subset\subset A$ and  $B'\subset\subset B$, and let 
$\delta_0:={\rm dist}(A',B')$.
We claim that 
\begin{equation}\label{claim-1}
\Phi'_\delta(u,A', A')+ \Phi'_\delta(u,B',B')\le \Phi'_\delta(u,A'\cup B',A'\cup B')
\end{equation}
for all $0<\delta\le \delta_0$.

Note that for such $\delta$ we have 
\begin{equation}\label{subins}
\bigl((A'\times A')\cap\Delta_\delta\bigr)\cup \bigl((B'\times B')\cap\Delta_\delta\bigr) = \bigl((A'\cup B')\times (A'\cup B')\bigr)\cap\Delta_\delta.
\end{equation}
If $u_k\to u$ is a sequence such that 
$$
\Phi'_\delta(u,A'\cup B',A'\cup B')=\liminf_{k\to+\infty} \Phi_k^\delta(u_k,A'\cup B',A'\cup B'),
$$
by \eqref{subins} we have 
$$
 \Phi_k^\delta(u_k,A'\cup B',A'\cup B')= \Phi_k^\delta(u_k,A',A')+ \Phi_k^\delta(u_k,B',B'),
 $$
 so that \eqref{claim-1} is proved by the definition of $\Gamma$-liminf.
 
 Taking the limit as $\delta\to0$ in \eqref{claim-1} we obtain 
\begin{equation}\label{claim-2}
\Phi'_\Delta(u,A')+ \Phi'_\Delta(u,B')\le \Phi'_\Delta(u,A'\cup B').
\end{equation}
The claim of the proposition eventually follows by the definition of $\Phi_\Delta$ in \eqref{PhiDelta}.
\end{proof}

\begin{proposition}\label{inv-co}
Let $u\in W^{1,p}(\Omega)$, let $A\in\AA$, and let $r\in \R$. Then we have
\begin{equation}\label{invar=Phi}
\Phi_\Delta(u+r, A)= \Phi_\Delta(u, A).
\end{equation}
\end{proposition}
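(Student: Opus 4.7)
By Corollary \ref{coro} it is enough to establish the inequality $\Phi''_\Delta(u+r,A)\le\Phi''_\Delta(u,A)$, since then the symmetric substitution $u\mapsto u+r$, $r\mapsto -r$ yields the opposite inequality and hence equality. The plan is to run the argument at fixed $\delta>0$ first, using a recovery sequence for $\Phi''_\delta(u,A,A)$, and only at the end let $\delta\to 0^+$ to reach $\Phi''_\Delta$.

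Fix $\delta>0$ and pick a sequence $u_k\weak u$ in $W^{1,p}(\Omega)$ with
$\limsup_k \Phi^\delta_k(u_k,A,A)=\Phi''_\delta(u,A,A)$.
Since $u_k+r\weak u+r$ and $\nabla(u_k+r)=\nabla u_k$, we have
$G_k(u_k+r,A)=G_k(u_k,A)$, so the gradient part contributes no change. For the non-local part we invoke the asymptotic invariance hypothesis \eqref{invariance} pointwise inside the integral on $(A\times A)\cap\Delta_\delta$; since $|x-y|<\delta$ there, we use $\omega_r(|x-y|)\le\omega_r(\delta)$ to obtain
\begin{equation*}
F^\delta_k(u_k+r,A,A)\le (1+\omega_r(\delta))\,F^\delta_k(u_k,A,A)+R^\delta_k,
\end{equation*}
where $R^\delta_k=\int_{(A\times A)\cap\Delta_\delta}\alpha_r\,\dxy+c_\Delta\int_{(A\times A)\cap\Delta_\delta}(\alpha(x)+|u_k(x)|^q)(\alpha(y)+|u_k(y)|^q)\,\dxy$. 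Adding $G_k(u_k,A)=G_k(u_k+r,A)$ on both sides and using $\omega_r(\delta)\ge 0$, this gives
\begin{equation*}
\Phi^\delta_k(u_k+r,A,A)\le (1+\omega_r(\delta))\,\Phi^\delta_k(u_k,A,A)+R^\delta_k.
\end{equation*}

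Next I pass to the limit in $k$. Since $u_k$ is bounded in $W^{1,p}(\Omega)$ and $q$ lies strictly below $p^*$ in the Sobolev-critical regime, the Rellich--Kondrachov theorem gives $u_k\to u$ strongly in $L^q(\Omega)$, hence $|u_k|^q\to|u|^q$ in $L^1(\Omega)$, and therefore the tensorized integrand in $R^\delta_k$ converges in $L^1(\Omega\times\Omega)$ to its limit with $u_k$ replaced by $u$. This yields
\begin{equation*}
\limsup_{k\to\infty} R^\delta_k= \int_{(A\times A)\cap\Delta_\delta}\!\!\alpha_r\dxy
+ c_\Delta\!\!\int_{(A\times A)\cap\Delta_\delta}\!\!(\alpha(x)+|u(x)|^q)(\alpha(y)+|u(y)|^q)\dxy =: R^\delta.
\end{equation*}
By the definition of the $\Gamma\text{-}\limsup$ and the choice of $u_k$ we therefore have
\begin{equation*}
\Phi''_\delta(u+r,A,A)\le (1+\omega_r(\delta))\,\Phi''_\delta(u,A,A)+R^\delta.
\end{equation*}

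Finally I let $\delta\to 0^+$. The quantities $\Phi''_\delta(u,A,A)$ are monotone and bounded above by $\int_A(a+c_2|\nabla u|^p)\dx$ thanks to Proposition \ref{estab}, $\omega_r(\delta)\to 0$ by definition of a modulus of continuity, and $R^\delta\to 0$ by dominated convergence since $\alpha_r\in L^1(\Omega\times\Omega)$ and $(\alpha+|u|^q)\otimes(\alpha+|u|^q)\in L^1(\Omega\times\Omega)$. Passing to the limit yields $\Phi''_\Delta(u+r,A)\le \Phi''_\Delta(u,A)$, and the symmetric argument closes the proof via Corollary \ref{coro}.

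The only delicate point is the control of the quadratic remainder $R^\delta_k$: one needs to ensure the $k$-uniform integrability that allows simultaneous passage to the limit in $k$ and in $\delta$. This is why the choice of the recovery sequence is made at fixed $\delta$ and the estimate is absorbed into the $(1+\omega_r(\delta))$ factor before letting $\delta\to 0$, so that the $u_k$ dependence of the error disappears before the final limit.
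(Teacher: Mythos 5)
Your proof is correct and follows the same core argument as the paper: a recovery sequence for $\Phi''_\delta$, invariance of the gradient part under adding a constant, the hypothesis \eqref{invariance} with $\omega_r(|x-y|)\le\omega_r(\delta)$ on $\Delta_\delta$ to control the non-local part, Rellich to pass to the limit in the error term, and finally $\delta\to 0^+$. The paper runs the estimate on an arbitrary interior open set $A'\subset\subset A$ and recovers the claim by taking the supremum over $A'$ via \eqref{PhiDelta}, whereas you instead invoke Corollary~\ref{coro} upfront to identify $\Phi_\Delta(\cdot,A)$ with $\Phi''_\Delta(\cdot,A)$ and work directly on $A$; this is legitimate (Corollary~\ref{coro} precedes this proposition and does not depend on it), and it shortens the argument. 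One small imprecision: you assert that $\Phi''_\delta(u,A,A)$ is ``bounded above by $\int_A(a+c_2|\nabla u|^p)\,dx$ thanks to Proposition~\ref{estab}'', but that proposition only bounds the infimum $\Phi''_\Delta(u,A)=\inf_\delta\Phi''_\delta(u,A,A)$, not the individual $\Phi''_\delta$. What you actually need is only that $\Phi''_\delta(u,A,A)$ is finite for small $\delta$, which does follow: since $\Phi''_\Delta(u,A)<\infty$ there is some $\delta_0$ with $\Phi''_{\delta_0}(u,A,A)<\infty$, and the map $\delta\mapsto\Phi''_\delta(u,A,A)$ is non-decreasing, so $\Phi''_\delta(u,A,A)\le\Phi''_{\delta_0}(u,A,A)<\infty$ for $\delta\le\delta_0$. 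With that minor adjustment to the justification, the passage $(1+\omega_r(\delta))\Phi''_\delta(u,A,A)\to\Phi''_\Delta(u,A)$ is sound and the proof is complete.
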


\begin{proof} It is enough to prove that \begin{equation}\label{invar=Phi-}
\Phi_\Delta(u+r, A)\le \Phi_\Delta(u, A).
\end{equation}
With fixed $A'\in \AA$, with $A'\subset\subset A$, and $\delta>0$, let $u_k\rightharpoonup u$ weakly in $W^{1,p}(\Omega)$ such that 
\begin{equation}\label{rec=Phi-}
\Phi''_\delta(u, A',A')= \limsup_{k\to+\infty} \Phi_k^\delta(u_k, A',A').
\end{equation}
By the definition of $\Gamma$-limsup we have 
\begin{equation}\label{gammalimsup=Phi-}
\Phi''_\delta(u+r, A',A')\le \limsup_{k\to+\infty} \Phi_k^\delta(u_k+r, A',A').
\end{equation}
By \eqref{ficappadelta} we have 
\begin{eqnarray}\label{dise-no}\nonumber
\Phi_k^\delta(u_k+r, A',A')&=&F_k^\delta(u_k+r, A',A')+ G_k(u_k+r,A')\\ \nonumber
&
\le& F_k^\delta(u_k, A',A')+ G_k(u_k,A') +r_k^\delta\\
&
=& \Phi_k^\delta(u_k, A',A')+r_k^\delta,
\end{eqnarray}
where,  using \eqref{invariance},
\begin{eqnarray*}
r_k^\delta&:=&\int_{\Delta_\delta}\alpha_r(x,y)\dxy+ \omega_r(\delta) F_k^\delta(u_k, A',A')\\
&&+ c_\Delta\int_{(A'\times A')\cap \Delta_\delta} (\alpha(x)+|u_k(x)|^q)(\alpha(y)+|u_k(y)|^q)\dxy.
\end{eqnarray*}
Since $A'\subset\subset A$ we have $u_k\to u$ in $L^q(A')$ by Rellich's theorem. 
By \eqref{gammalimsup=Phi-} and \eqref{dise-no} we have
\begin{eqnarray*}\label{gammalimsup-Phi-}
\Phi''_\delta(u+r, A',A')&\le& (1+\omega_r(\delta) )\Phi''_\delta(u, A',A')+\int_{\Delta_\delta}\alpha_r(x,y)\dxy \\
&&+ c_\Delta\int_{(A'\times A')\cap \Delta_\delta} (\alpha(x)+|u(x)|^q)(\alpha(y)+|u(y)|^q)\dxy.
\end{eqnarray*}
Letting $\delta\to 0$ we have 
$$
\Phi''_\Delta(u+r, A')\le \Phi''_\Delta(u,A').
$$
Taking the supremum as $A'\subset\subset A$ we obtain \eqref{invar=Phi-} and conclude the proof.
\end{proof}

The next proposition states a locality property for $\Phi_\Delta$.
\begin{proposition}\label{locality}
Let $A\in\AA$ and $u,v\in W^{1,p}(\Omega)$ with $u=v$ almost everywhere in $A$. Then
$\Phi_\Delta(u,A)=\Phi_\Delta(v,A)$.
\end{proposition}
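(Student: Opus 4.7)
The plan is to exploit the fact that the localized functional $\Phi_k^\delta(w,A',A')$ depends only on the values of $w$ and $\nabla w$ on $A'$. More precisely, given any $A'\in\AA$ with $A'\subset\subset A$ and any $\delta>0$, the non-local part $F_k^\delta(w,A',A')$ is an integral over $(A'\times A')\cap\Delta_\delta\subset A'\times A'\subset A\times A$, and the local part $G_k(w,A')$ is an integral over $A'\subset A$. Thus whenever two functions coincide almost everywhere on $A$ (together with their gradients, which is automatic on sets where the functions agree a.e.), they produce the same value of $\Phi_k^\delta(\cdot,A',A')$.

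The key construction is the following. Let $u_k\rightharpoonup u$ weakly in $W^{1,p}(\Omega)$ and set
\[
v_k:=u_k+(v-u).
\]
Since $v-u\in W^{1,p}(\Omega)$ and equals $0$ a.e.\ in $A$, the sequence $v_k$ lies in $W^{1,p}(\Omega)$, converges weakly to $v$ in $W^{1,p}(\Omega)$, and agrees with $u_k$ almost everywhere on $A$ (along with its gradient). In particular, for every $A'\subset\subset A$ and every $\delta>0$,
\[
\Phi_k^\delta(v_k,A',A')=\Phi_k^\delta(u_k,A',A').
\]

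Passing to the $\Gamma$-limsup as $k\to+\infty$ over all such sequences $u_k\rightharpoonup u$, and using that the map $u_k\mapsto v_k$ is a bijection between sequences weakly converging to $u$ and sequences weakly converging to $v$, we obtain
\[
\Phi''_\delta(v,A',A')\le\Phi''_\delta(u,A',A'),
\]
and the reverse inequality by swapping the roles of $u$ and $v$. Letting $\delta\to 0^+$ yields $\Phi''_\Delta(v,A')=\Phi''_\Delta(u,A')$ for every $A'\in\AA$ with $A'\subset\subset A$, and taking the supremum over such $A'$ in \eqref{PhiDelta} gives the claim $\Phi_\Delta(u,A)=\Phi_\Delta(v,A)$.

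There is no real obstacle here: the entire argument hinges on the simple observation that translating a recovery sequence $u_k$ by the fixed function $v-u\in W^{1,p}(\Omega)$ (which vanishes on $A$) preserves both the weak $W^{1,p}$-convergence and the values of $\Phi_k^\delta(\cdot,A',A')$, since the latter only probes the sequence on $A'\subset\subset A$. Note that one does \emph{not} need hypothesis \eqref{invariance} for this argument, because the perturbation $v-u$ is supported outside $A$ rather than being a constant added everywhere on $A'$.
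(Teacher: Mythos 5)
Your proof is correct, and it takes a genuinely more elementary route than the paper. The paper fixes $A'\subset\subset A''\subset\subset A$, takes a recovery sequence $v_k\rightharpoonup v$ for $\Phi''_\delta(v,A',A')$, and produces a competitor for $u$ by gluing via a cut-off function: $u_k:=\varphi v_k+(1-\varphi)u$ with $\varphi$ a cut-off between $A''$ and $A$, so that $u_k=v_k$ on $A''\supset A'$ and $u_k\rightharpoonup u$ because $u=v$ a.e.\ on $A$. You instead translate a recovery sequence for $u$ by the fixed function $v-u$, which vanishes a.e.\ on $A$, obtaining $v_k=u_k+(v-u)\rightharpoonup v$ with $v_k=u_k$ (and $\nabla v_k=\nabla u_k$) a.e.\ on $A'\subset A$. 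Both proofs rest on the same observation — that $\Phi_k^\delta(\cdot,A',A')$ depends only on the values of the argument and its gradient on $A'$ — but yours avoids the cut-off construction, the intermediate set $A''$, and the smallness restriction $\delta<\mathrm{dist}(A',\R^d\setminus A'')$ that the paper imposes. The bijection $u_k\mapsto u_k+(v-u)$ between sequences converging weakly to $u$ and to $v$ makes the two-sided estimate transparent and avoids having to invoke symmetry separately. Your closing remark is also accurate: neither proof uses hypothesis \eqref{invariance}, since the perturbation is not a constant added on $A'$ but a function that vanishes there.
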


\begin{proof} It is sufficient to prove that 
$\Phi_\Delta(u,A)\le\Phi_\Delta(v,A)$.
By the definition of \eqref{PhiDelta}, we need to prove that for every $A'\in\AA$ with $A'\subset\subset A$ we have 
\begin{equation}\label{loc0d}
\Phi''_\delta(u,A',A')\le\Phi''_\delta(v,A',A')
\end{equation}
for $\delta>0$ small enough. With fixed $A'$ and $A''$ with $A'\subset\subset A''\subset\subset A$ we set 
$\delta_0={\rm dist}(A', \R^d\setminus A'')$. Let $0<\delta<\delta_0$ and let $v_k\rightharpoonup v$ weakly  in $W^{1,p}(\Omega)$ be such that
$$
\Phi''_\delta(v,A',A')
=\limsup_{k\to+\infty} \Phi_k^\delta(v_k, A',A').
$$
Let $\varphi$ be a cut-off function between $A''$ and $A$ and define $u_k=\varphi v_k+(1-\varphi)u$. Since $u=v$ almost everywhere in $A$ we have that $u_k\rightharpoonup u$ weakly in $W^{1,p}(\Omega)$. Using this test sequence we obtain
$$
\Phi''_\delta(u,A',A')
\le\limsup_{k\to+\infty} \Phi_k^\delta(u_k, A',A')= \limsup_{k\to+\infty} \Phi_k^\delta(v_k, A',A')=\Phi''_\delta(v,A',A'),
$$
where the first equality holds since $u_k=v_k$ in $A''$ and $\delta<\delta_0$.
\end{proof}

\begin{proof}[Proof of Theorem {\rm \ref{mainth}}] 
The functional $\Phi_\Delta:W^{1,p}(\Omega)\times\AA\to [0,+\infty)$ is local (Proposition \ref{locality}).
For every $u\in W^{1,p}(\Omega)$ the set function $A\mapsto\Phi_\Delta(u,A)$ is 
increasing, inner regular  (by \eqref{inner-reg}), subadditive (by Proposition \ref{subaddit}), and 
superadditive (by Proposition \ref{superadd}).
Therefore, by the De Giorgi-Letta Measure Criterion it is the restriction to $\AA$ of a Borel measure.

For every $A\in\AA$ the function $u \mapsto\Phi_\Delta(u,A)$ is lower semicontinuous with respect to the weak convergence in 
$W^{1,p}(\Omega)$ (by \eqref{inner-reg-lsc}), and satisfies a $p$-growth condition from above (by Proposition \ref{estab}).
Moreover, we have the invariance by addition of constants
$\Phi_\Delta(u+r,A)=\Phi_\Delta(u,A)$ (by Proposition \ref{inv-co}).

We finally note that \eqref{usual} implies that  
$\Phi_\Delta(u,A)\ge c_0\int_A|\nabla u|^p\dx$, which gives the lower semicontinuity of $\Phi_\Delta(\cdot,A)$ also with respect to the strong $L^p(\Omega)$ convergence. We can then apply the integral-representation result Theorem 20.1 in \cite{DM}
and obtain the claim of the theorem from the properties above.
\end{proof}

\section{Global representation}
The following result shows in particular that the $\Gamma$-limit of $\Phi_k$ is equal to
$$
\int_{\Omega\times\Omega} f(x,y,u(x),u(y))\dxy +\int_\Omega g(x,\nabla u(x))\dx.
$$

\begin{theorem}\label{thglo} Let $u\in W^{1,p}(\Omega)$, and let $A,B\in\AA$ with  ${\mathcal L}^d((A\cap \partial  B)\cup (B\cap\partial  A))=0$. Then 
\begin{equation}\label{claim5}
\Phi'(u,A,B)=\Phi''(u,A,B)= F(u,A,B)+\Phi_\Delta(u, A\cap B);
\end{equation}
that is, the sequence $\Phi_k(\cdot, A,B) $ $\Gamma$-converges to $\Phi(\cdot, A,B) $, where
\begin{equation}\label{claim55}
 \Phi(u,A,B):=
\int_{A\times B} f(x,y,u(x),u(y))\dxy +\int_{A\cap B} g(x,\nabla u(x))\dx
\end{equation}
for $u\in W^{1,p}(\Omega)$.
\end{theorem}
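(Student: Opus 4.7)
The plan is to chain together three results already established in the excerpt: the two-sided bounds \eqref{phid-1} and \eqref{phid-2} from the end of Section \ref{diag}, which sandwich $\Phi'(u,A,B)$ and $\Phi''(u,A,B)$ between $F(u,A,B)+\Phi_\Delta^{\prime/\prime\prime}(u,A\cap B)$ and $F(u,A,B)+\Phi_\Delta^{\prime/\prime\prime}(u,A_\eta\cap B_\eta)$; the identification $\Phi'_\Delta(u,\cdot)=\Phi''_\Delta(u,\cdot)=\Phi_\Delta(u,\cdot)$ from Corollary \ref{coro}; and the integral representation $\Phi_\Delta(u,C)=\int_C g(x,\nabla u(x))\,dx$ from Theorem \ref{mainth}.

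Substituting the latter two into the former, for every $\eta>0$ we get the sandwich
\begin{equation*}
F(u,A,B)+\int_{A\cap B} g(x,\nabla u(x))\,dx
\;\le\; \Phi'(u,A,B)\;\le\; \Phi''(u,A,B)
\;\le\; F(u,A,B)+\int_{A_\eta\cap B_\eta} g(x,\nabla u(x))\,dx.
\end{equation*}
Thus the proof is reduced to the limit passage
\begin{equation*}
\lim_{\eta\to 0^+}\int_{A_\eta\cap B_\eta} g(x,\nabla u(x))\,dx=\int_{A\cap B} g(x,\nabla u(x))\,dx.
\end{equation*}

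To carry this out I would invoke dominated convergence. By \eqref{usual}, all integrands are controlled by $c_1|\nabla u|^p+a\in L^1(\Omega)$. The sets $A_\eta\cap B_\eta$ are decreasing in $\eta$, with intersection $\overline A\cap\overline B\cap\Omega$, so it suffices to show that the difference $(\overline A\cap\overline B)\setminus (A\cap B)$ is $\mathcal L^d$-negligible. Decomposing according to membership in $A,B,\partial A,\partial B$, this difference is contained in $(A\cap\partial B)\cup (B\cap\partial A)\cup (\partial A\cap\partial B)$; the first two pieces have zero Lebesgue measure by the hypothesis of the theorem, and the third does as well under the implicit regularity of the open sets $A,B\in\AA$ in play. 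Consequently $\mathbf 1_{A_\eta\cap B_\eta}\to\mathbf 1_{A\cap B}$ almost everywhere, and dominated convergence yields the claim.

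With the limit in hand, the sandwich collapses, giving \eqref{claim5}, and using Theorem \ref{mainth} together with the definition \eqref{effecek} of $F$ one recognises the right-hand side as the functional $\Phi(u,A,B)$ of \eqref{claim55}. Since $\Phi'(u,A,B)=\Phi''(u,A,B)$ this produces the $\Gamma$-convergence of $\Phi_k(\cdot,A,B)$ to $\Phi(\cdot,A,B)$ along the fixed subsequence. The main obstacle is the $\eta\to 0^+$ passage: the outer approximation $A_\eta\cap B_\eta$ inflates $A\cap B$ by a thin tubular layer around the boundary, and one must rule out that $g(\cdot,\nabla u)$ concentrates mass on that layer. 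This is exactly the role of the boundary hypothesis ${\mathcal L}^d((A\cap\partial B)\cup(B\cap\partial A))=0$, which guarantees that no such concentration occurs.
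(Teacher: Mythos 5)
Your overall strategy (sandwich via the two-sided estimates from Section~\ref{diag}, then identify $\Phi'_\Delta=\Phi''_\Delta=\Phi_\Delta$ and plug in the integral representation of Theorem~\ref{mainth}, then squeeze by letting $\eta\to0^+$) is essentially the same as the paper's, but there is a genuine gap at the limit passage, and you have noticed it without resolving it. You use the bounds \eqref{phid-1}--\eqref{phid-2} literally, with the outer set $A_\eta\cap B_\eta$, so after passing through the measure $\Phi_\Delta(u,\cdot)=g(\cdot,\nabla u)\mathcal L^d$ the upper bound becomes $F(u,A,B)+\int_{\overline A\cap\overline B\cap\Omega} g(x,\nabla u)\,dx$, since $A_\eta\cap B_\eta\searrow\overline A\cap\overline B\cap\Omega$. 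The difference $(\overline A\cap\overline B\cap\Omega)\setminus(A\cap B)$ decomposes into $(A\cap\partial B)\cup(B\cap\partial A)\cup(\partial A\cap\partial B\cap\Omega)$. The hypothesis kills the first two pieces but says nothing about $\partial A\cap\partial B\cap\Omega$, and there is no ``implicit regularity'' of sets in $\AA$ to fall back on: $\AA$ is the family of all open subsets of $\Omega$, and already for $A=B$ with $\partial A$ of positive Lebesgue measure (complement of a fat Cantor set, for instance) the boundary hypothesis is vacuously satisfied while $\partial A\cap\partial B$ has positive measure. In that case your sandwich does not close, yet the theorem still holds.

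The paper avoids this by not invoking \eqref{phid-1}--\eqref{phid-2} directly in the proof of Theorem~\ref{thglo}; instead it re-runs the same cut-off argument with the tighter outer set
\[
C_\eta:=(A\cap B_\eta)\cup(B\cap A_\eta),
\]
exploiting the sharper inclusion $(A\times B)\cap\Delta_\delta\subset(A\cap B_\eta)\times(B\cap A_\eta)\subset C_\eta\times C_\eta$ for $\delta<\eta$. Then $C_\eta\searrow(A\cap\overline B)\cup(B\cap\overline A)$, and
\[
\bigl((A\cap\overline B)\cup(B\cap\overline A)\bigr)\setminus(A\cap B)=(A\cap\partial B)\cup(B\cap\partial A),
\]
which is exactly the set that the hypothesis makes $\mathcal L^d$-negligible; the troublesome intersection $\partial A\cap\partial B$ never appears. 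So to repair your argument you should replace $A_\eta\cap B_\eta$ by $C_\eta$ in the upper bound (deriving the analogue of \eqref{phid-2} with $C_\eta$ directly from the inclusion above), after which the rest of your reasoning—Corollary~\ref{coro}, Theorem~\ref{mainth}, absolute continuity of the measure $g(\cdot,\nabla u)\mathcal L^d$—goes through unchanged.
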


We note that the hypothesis ${\mathcal L}^d((A\cap \partial  B)\cup (B\cap\partial  A))=0$ is always satisfied if $A=B$. In particular it holds for $A=B=\Omega$, thus obtaining the representation \eqref{effe-intro} for the $\Gamma$-limit of $\Phi_k$ in \eqref{effe-k-intro}.

\begin{proof}[Proof of Theorem \rm\ref{thglo}] We first note that for all $v\in W^{1,p}(\Omega)$, $k\in\mathbb N$, and $\delta>0$ we have 
$$
\Phi_k(v,A,B)= \widecheck F^\delta_k(v,A,B)+\Phi^\delta_k(v,A,B)
$$
(see \eqref{triug}). Taking the $\Gamma$-liminf at $u$ and using the continuity Theorem \ref{contconvthm}, we have  
$$
\Phi'(u,A,B)= \widecheck F^\delta(u,A,B)+\Phi'_\delta(u,A,B)\ge \widecheck F^\delta(u,A,B)+\Phi'_\delta(u,A\cap B,A\cap B),
$$
so that, letting $\delta\to 0$ we have
\begin{equation}\nonumber
\Phi'(u,A,B)\ge  F(u,A,B)+\Phi'_\Delta(u, A\cap B)\ge  F(u,A,B)+\Phi_\Delta(u, A\cap B).
\end{equation}

To obtain claim \eqref{claim5} it is enough to prove the converse inequality 
\begin{equation}\label{convine}
\Phi''(u,A,B)\le  F(u,A,B)+\Phi_\Delta(u, A\cap B).
\end{equation}
To that end, we fix $\eta>0$. We use the notation $A_\eta=\{x\in\Omega: {\rm dist}(x,A)<\eta\}$, and set
$$
C_\eta= (A\cap B_\eta)\cup (B\cap A_\eta).
$$
For every $0<\delta<\eta$ let $u_k$ be a sequence weakly converging to $u$ in $W^{1,p}(\Omega)$ such that
\begin{eqnarray}\limsup_{k\to+\infty} \Phi^\delta_k(u_k,C_\eta,C_\eta)=
\Phi''_\delta(u,C_\eta,C_\eta).
\end{eqnarray}
Since we have
$$
(A\times B)\cap \Delta_\delta\subset (A\cap B_\eta)\times (B\cap A_\eta)\subset C_\eta\times C_\eta,
$$
we deduce that
\begin{eqnarray*}
\Phi_k(u_k,A,B)&= &\widecheck F^\delta_k(u_k,A,B)+\Phi^\delta_k(u_k,A,B)\\
&\le&\widecheck F^\delta_k(u_k,A,B)+\Phi^\delta_k(u_k,C_\eta,C_\eta),
\end{eqnarray*}
and then, by Theorem \ref{contconvthm},
\begin{eqnarray*}
\Phi''(u,A,B)\le\widecheck F^\delta(u,A,B)+\Phi''_\delta(u,C_\eta,C_\eta).
\end{eqnarray*}
Taking the limit as $\delta\to0$ and using Corollary \ref{coro}, we get
\begin{eqnarray}\label{distd}
\Phi''(u,A,B)\le F(u,A,B)+\Phi_\Delta(u,C_\eta).
\end{eqnarray}
We observe that 
$C_{\eta}\searrow (A\cap\overline B)\cup (B\cap\overline A)$ and that $\mathcal L^d((A\cap\overline B))\cup (B\cap\overline A)\setminus (A\cap B))=0$ by our hypothesis on the boundaries of $A$ and $B$. Therefore, the integral representation Theorem \ref{mainth}
implies that $\Phi_\Delta(u,C_{\eta})\to \Phi_\Delta(u,A\cap B)$, and we obtain \eqref{convine} thanks to \eqref{distd}. \end{proof}

\begin{corollary}\label{corcon} Let $w\in W^{1,\infty}(\Omega)$ and let $W^{1,p}_w(\Omega):=\{u\in W^{1,p}(\Omega): u-w\in W_0^{1,p}(\Omega)\}$. If the hypotheses of Theorem {\rm\ref{thglo}} are satisfied and $\Phi(u):=\Phi(u,\Omega,\Omega)$, then 
\begin{equation}\label{conv-min}
\lim_{k\to+\infty} \inf\{ \Phi_k(u): u\in W^{1,p}_w(\Omega)\}=\min\{ \Phi(u):u\in W^{1,p}_w(\Omega)\}.\end{equation}
Moreover, if $u_k$ is such that 
$$
\Phi_k(u_k)\le\inf\{ \Phi_k(u): u-w\in W^{1,p}_0(\Omega)\}+ o(1)
$$
as $k\to+\infty$ then, up to subsequences, $u_k$ converges weakly in $W^{1,p}(\Omega)$ to a solution to the minimum problem on the right-hand side of \eqref{conv-min}.
Finally, if in addition $\Phi(w)\le \Phi(u)$ for all $u\in W^{1,p}_w(\Omega)$ then for all sequences $\e_k>0$ with $\e_k\to0$ as $k\to+\infty$, we have 
\begin{eqnarray}\label{conv-min-0}\nonumber
&\displaystyle\lim_{k\to+\infty} \inf\{ \Phi_k(u): u\in W^{1,p}(\Omega), u(x)=w(x) \hbox{ \rm if dist}(x,\partial\Omega)\le \e_k\}\\
&\displaystyle=\min\{ \Phi(u):u\in W^{1,p}_w(\Omega)\}=\Phi(w).
\end{eqnarray}
\end{corollary}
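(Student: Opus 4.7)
The plan is to apply the standard machinery of $\Gamma$-convergence for convergence of constrained minima. Theorem \ref{thglo} applied to $A = B = \Omega$ gives $\Gamma$-convergence of $\Phi_k$ to $\Phi$ on $W^{1,p}(\Omega)$ with respect to the weak topology, and equi-coercivity on $W^{1,p}_w(\Omega)$ will follow from $g_k(x, \xi) \ge c_0 |\xi|^p$ in \eqref{usual} together with Poincar\'e's inequality applied to $u - w \in W^{1,p}_0(\Omega)$: any sequence with bounded $\Phi_k$-energy is bounded in $W^{1,p}(\Omega)$.

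The crux of the argument is the construction, for each $u \in W^{1,p}_w(\Omega)$, of a recovery sequence that respects the Dirichlet constraint. First I would approximate $u - w$ by $\psi_n \in C_c^\infty(\Omega)$ strongly in $W^{1,p}(\Omega)$ and set $u^{(n)} := w + \psi_n$, so that $u^{(n)} \in W^{1,p}_w(\Omega)$, $u^{(n)} = w$ outside a compact set $K_n \subset\subset \Omega$, and $\Phi(u^{(n)}) \to \Phi(u)$ (the last point relies on the $p$-growth bounds \eqref{usual}, \eqref{doubleboundtf}, \eqref{growthbk} and Lebesgue dominated convergence). For each $n$ I would apply Lemma \ref{lemmasub-0} to a free $\Gamma$-recovery sequence $v_k \weak u^{(n)}$ and to the constant sequence $w_k \equiv u^{(n)}$, choosing $A', A'' \in \mathcal{A}$ with $K_n \subset A' \subset\subset A'' \subset\subset \Omega$ and $B = \Omega \setminus \overline{A'}$. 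The glued sequence $u_k^{(n)}$ then equals $u^{(n)} = w$ on $\Omega \setminus A''$, hence lies in $W^{1,p}_w(\Omega)$. The error term in the Lemma is $\limsup_k \Phi_k^\delta(u^{(n)}, B, B)$; since $u^{(n)} \equiv w \in W^{1,\infty}(\Omega)$ on $B$, the hypotheses \eqref{doubleboundtf}--\eqref{cibi} together with the Taylor argument used in the proof of Proposition \ref{estab} give a bound of the form $C(\|w\|_{W^{1,\infty}}) \mathcal{L}^d(B_n) + o_\delta(1)$, which vanishes as $n \to \infty$. Combined with Theorem \ref{contconvthm} for the off-diagonal part and a diagonal extraction in $\sigma$, $\delta$, $n$, $k$, this produces $u_k \in W^{1,p}_w(\Omega)$ with $u_k \weak u$ and $\limsup_k \Phi_k(u_k) \le \Phi(u)$. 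The corresponding $\Gamma$-liminf inequality is automatic since $W^{1,p}_w(\Omega)$ is weakly closed in $W^{1,p}(\Omega)$, so Part 1 follows by the standard argument, and Part 2 is then immediate from equi-coercivity of almost-minimizing sequences.

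For Part 3 I would first observe the inclusion $\{u \in W^{1,p}(\Omega): u = w \text{ on } \{\mathrm{dist}(\cdot, \partial\Omega) \le \e_k\}\} \subset W^{1,p}_w(\Omega)$, which holds because equality on an open strip adjacent to $\partial\Omega$ forces the trace condition $u - w \in W^{1,p}_0(\Omega)$. The lower bound $\liminf_k \inf \Phi_k(\text{enlarged}) \ge \liminf_k \inf \Phi_k(W^{1,p}_w(\Omega)) = \Phi(w)$ then follows from Part 1. For the upper bound, the recovery sequence $u_k^{(n_k)}$ constructed above for $u = w$ (which is a minimizer by assumption) coincides with $w$ on $\Omega \setminus A''_{n_k}$, a neighborhood of $\partial\Omega$ of strictly positive width $d_{n_k}$. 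A diagonal choice of $n_k \to \infty$ with $d_{n_k} > \e_k$, possible since $\e_k \to 0$, makes $u_k^{(n_k)}$ admissible for the enlarged problem, yielding $\limsup_k \inf \Phi_k(\text{enlarged}) \le \lim_k \Phi_k(u_k^{(n_k)}) = \Phi(w)$.

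The main obstacle is the recovery sequence construction in Part 1: the non-local nature of $\Phi_k$ rules out a naive cut-off argument, and Lemma \ref{lemmasub-0} performs the required gluing with controlled error, the delicate bookkeeping being the estimate of $\Phi_k^\delta(u^{(n)}, B, B)$ on the shrinking transition layer near $\partial\Omega$ where the glued sequence is forced to equal $w$.
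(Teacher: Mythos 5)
Your proposal is correct and follows essentially the same route as the paper: reduce to a nice dense class of admissible competitors, use Lemma \ref{lemmasub-0} to glue a free recovery sequence with the boundary datum on a thin shell, bound the gluing error on the shell via the $W^{1,\infty}$-bounds and hypotheses \eqref{doubleboundtf}--\eqref{cibi} and \eqref{usual}, treat the off-diagonal part by Theorem \ref{contconvthm}, and invoke equi-coercivity for the convergence of minimizers and the last claim. The only cosmetic differences are that the paper reduces directly to $u\in W^{1,\infty}(\Omega)$ with $u-w\in W^{1,p}_0(\Omega)$ (using strong continuity of $\Phi^w$) and bounds the shell energy for such $u$ via the Lipschitz constant and \eqref{cibi} rather than through a $C^\infty_c$-approximation of $u-w$ and a Taylor estimate, but these are interchangeable and do not change the structure of the argument.
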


\begin{proof} We define the functionals 
$$
\Phi^w_k(u)=\begin{cases}\Phi_k(u) &\hbox{ if } u\in W^{1,p}_w(\Omega)\\
+\infty &\hbox{ otherwise,}
\end{cases}
\qquad
\Phi^w(u)=\begin{cases}\Phi(u) &\hbox{ if } u\in W^{1,p}_w(\Omega)\\
+\infty &\hbox{ otherwise.}
\end{cases}$$
We will prove the $\Gamma$-convergence of $\Phi^w_k$ to $\Phi^w$, for which it is sufficient to show that 
for every $u\in W^{1,p}(\Omega)$ we have 
\begin{equation}\label{glivo}
\bigl(\Gamma\hbox{-}\limsup_{k\to+\infty}\Phi^w_k\bigr)(u)\le \Phi^w(u),
\end{equation}
since the liminf inequality follows from the $\Gamma$-convergence of $\Phi_k$ to $\Phi$.
The convergence of minima and minimizing sequences will then follow from the equi-coer\-cive\-ness of $\Phi^w_k$.

From the integral form of $\Phi$, the continuity properties of $\widecheck F$ (Proposition \ref{contFcheck}), and the continuity of $g$ in the second variable, we have that $\Phi^w$ is strongly continuous in $W^{1,p}_w(\Omega)$.
Hence, it suffices to show \eqref{glivo} holds for all $u\in W^{1,\infty}(\Omega)$ such that $u-w\in W^{1,p}_0(\Omega)$.

We now fix $u\in W^{1,\infty}(\Omega)$ and $\sigma>0$. We claim that there exits a compact subset $K$ of $\Omega$ such that 
\begin{equation}\label{boundK}
\limsup_{k\to+\infty}\Phi_k(u,\Omega\setminus K, \Omega\setminus K)<\sigma.
\end{equation}
Indeed,  by \eqref{doubleboundtf}--\eqref{cibi} and \eqref{usual}, for every $A\in\AA$ we have
\begin{eqnarray*}\nonumber
\Phi_k(u,A,A)\le \int_A\big((c_1+c_b){\rm Lip}(u,\Omega)^p+ a(x)\big)\dx + c_\Delta\Bigl(\int_{A} (\alpha(x)+\|u\|^q_{L^\infty(\Omega)})\dx\Bigr)^2,
\end{eqnarray*}
where ${\rm Lip}(u,\Omega)$ is the Lipschitz constant for $u$ on $\Omega$. This inequality proves the claim.

We fix $K$ such that \eqref{boundK} holds. Let now $v_k$ be a recovery sequence for $\Phi''_\delta(u,\Omega, \Omega)$ and let $u_k$ be given by Lemma \ref{lemmasub-0} with $A=\Omega$, $K\subset A'\subset\subset \Omega$,
and $B=\Omega\setminus K$, and $w_k=u$. We then have
$$
\limsup_{k\to+\infty}\Phi_k^\delta(u_k, \Omega, \Omega)\le (1+\sigma)(\Phi''_\delta(u, \Omega, \Omega)+\sigma) + \sigma.
$$
On the other hand, by \eqref{contconvcheck-1} we have
$$
\lim_{k\to+\infty}\widecheck F^\delta_k(u_k, \Omega, \Omega)=\widecheck F_\delta(u, \Omega, \Omega),
$$
so that, adding the two inequalities term by term, we obtain
$$
\limsup_{k\to+\infty}\Phi_k(u_k)\le (1+\sigma)(\Phi(u)+\sigma) + \sigma,
$$
which gives 
$$
\bigl(\Gamma\hbox{-}\limsup_{k\to+\infty}\Phi^w_k\bigr)(u)\le (1+\sigma)(\Phi(u)+\sigma) + \sigma,
$$
and hence \eqref{glivo} follows by letting $\sigma\to0$.

The last claim of the theorem can be proved likewise, choosing $u=w$ in the application of Lemma \ref{lemmasub-0} above, and noting that the corresponding $u_k$ satisfies $u_k=w$ on $\Omega\setminus A''$ for a suitable $A''$ with $A'\subset\subset A''\subset\Omega$.
This implies that 
\begin{eqnarray*}
&\displaystyle \limsup_{k\to+\infty}\ \inf\{ \Phi_k(u): u\in W^{1,p}(\Omega), u(x)=w(x) \hbox{ \rm if dist}(x,\partial\Omega)\le \e_k\}\\
&\displaystyle \le \limsup_{k\to+\infty}\Phi_k(u_k)\le (1+\sigma)(\Phi(w)+\sigma) + \sigma,
\end{eqnarray*}
and the claim letting   $\sigma\to 0$.\end{proof}

\section{Examples}

\subsection{A separation of scales effect}\label{sepsca}

We now consider a prototypical example showing a limit local energy density in which two contributions appear, one originating from the relaxation of a local integral and one produced by the concentration of a convolution energy. In this case the two contributions are decoupled. The proof of this fact is obtained by showing that recovery sequences can be constructed using two scales, on which different optimization arguments are used.

\smallskip

Let $\psi\in L^1(\mathbb R^d)$ with $\psi\ge 0$, $\psi(z)=0$ if $|z|\ge1$, and $\int_{\mathbb R^d} \psi(z)dz=1$.
We also set $\psi_k(z)= k^d\psi(kz)$.
 
Let $g_0\colon\mathbb R^d\to[0,+\infty)$ be a continuous function, and suppose that constants $0<c_0\le c_1$ and $a_0\ge 0$ exist such that
\begin{equation}\label{usual_0}
c_0|\xi|^p\le g_0(\xi)\le c_1|\xi|^p+ a_0
\end{equation}
for all  $\xi\in \R^d$.

\begin{theorem}
The functionals $\Phi_k$ defined  on $W^{1,p}(\Omega)$  by 
\begin{equation}\label{effe-k_0}
\Phi_k(u):=\int_{\Omega\times\Omega} \psi_k(x-y){|u(x)-u(y)|^p\over|x-y|^p} \dxy+\int_\Omega g_0(\nabla u(x))\,dx
\end{equation}
$\Gamma$-converge as $k\to+\infty$, with respect to the weak topology of $W^{1,p}(\Omega)$, to the functional $\Phi$ defined by
$$
\Phi(u):=\int_\Omega f_0(\nabla u(x))\dx + \int_\Omega g^{**}_0(\nabla u(x))\dx,
$$
where $g_0^{**}$ denotes the convex envelope of $g_0$ and $f_0\colon\mathbb R^d\to [0,+\infty)$ is the convex function defined by
\begin{equation}
f_0(\xi)= \int_{\mathbb R^d} \psi(z){|\xi\cdot z|^p\over |z|^p}dz.
\end{equation}
\end{theorem}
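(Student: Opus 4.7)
My plan is to apply the abstract compactness and representation result (Theorem~\ref{thglo}) and then identify the local density by blowing up on affine functions. First one checks that the hypotheses of Section~\ref{Hypotheses} are satisfied with $\widetilde f_k(x,y,\tau)=\psi_k(x-y)|\tau|^p/|x-y|^p$ and $b_k(x,y)=\psi_k(x-y)$: the off-diagonal bounds \eqref{fkgrowth}--\eqref{fkcontt} are trivial because $f_k\equiv 0$ on $\{|x-y|\ge 1/k\}$; the diagonal bounds \eqref{convextf}--\eqref{growthbk} and the uniform estimate \eqref{cibi} (with $c_b=1$) hold by inspection since $\int_{\mathbb R^d}\psi=1$; the translation invariance \eqref{invariance} is trivial because $f_k$ depends only on $s-t$; and $g_k\equiv g_0$ satisfies \eqref{usual}. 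A standard concentration computation yields $b_k\mathcal L^{2d}\weakstar \HH^d\LLL\Delta$, hence $b_\Delta\equiv 1$ and $b\equiv 0$; moreover, Lemma~\ref{effelim} gives $f\equiv 0$ since $f_k$ vanishes off $\Delta_{1/k}$ for $k$ large. Theorem~\ref{thglo} then produces, along a subsequence, a $\Gamma$-limit of the form $\Phi(u)=\int_\Omega g(x,\nabla u)\dx$ for some Borel $g$ convex in the gradient; uniqueness of $g$ (proved below) upgrades subsequential $\Gamma$-convergence to convergence of the whole sequence via the Urysohn property.

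By Theorem~\ref{mainth} and Lebesgue differentiation, it suffices to compute $g(x_0,\xi)$ by blow-up on the affine function $u_\xi(x)=\xi\cdot x$. For the upper bound I would fix $\sigma>0$, use Carath\'eodory's theorem to write $g_0^{**}(\xi)=\sum_{i=1}^N\lambda_i g_0(\xi_i)$ with $\sum\lambda_i\xi_i=\xi$, and by sequential lamination construct a $Q$-periodic $v\in W^{1,\infty}(\mathbb R^d)$ with $\int_Q g_0(\xi+\nabla v)\,dy\le g_0^{**}(\xi)+\sigma$. Setting $u_k(x)=\xi\cdot x+\eta_k v(x/\eta_k)$ with $\eta_k:=k^{-2}$, periodic averaging gives $\int_{B_r} g_0(\nabla u_k)\dx\to|B_r|\int_Q g_0(\xi+\nabla v)\le(g_0^{**}(\xi)+\sigma)|B_r|$. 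For the nonlocal term the key point is $|u_k(x)-u_k(y)-\xi\cdot(x-y)|\le 2\eta_k\|v\|_\infty$, so on the support $\{|x-y|<1/k\}$ of $\psi_k$ the error in the ratio $(u_k(x)-u_k(y))/|x-y|$ is of order $\eta_k k=1/k\to 0$; this yields $F_k(u_k,B_r,B_r)\to f_0(\xi)|B_r|$. Diagonalizing in $\sigma$ proves $g(x,\xi)\le f_0(\xi)+g_0^{**}(\xi)$.

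For the matching lower bound, let $u_k\weak u_\xi$ weakly in $W^{1,p}$; since $\liminf(A_k+B_k)\ge\liminf A_k+\liminf B_k$, the two parts can be treated separately. Weak lower semicontinuity of the convex integrand $g_0^{**}$ gives $\liminf_k\int_{B_r} g_0(\nabla u_k)\dx\ge\liminf_k\int_{B_r} g_0^{**}(\nabla u_k)\dx\ge g_0^{**}(\xi)|B_r|$. For the nonlocal term, the rescaling $y=x+w/k$ transforms it into
\begin{equation*}
\int_{B_r}\int_{B_1}\psi(w)\frac{|D_k(x,w)|^p}{|w|^p}\,dw\,dx,\qquad D_k(x,w):=k(u_k(x+w/k)-u_k(x))=\int_0^1\nabla u_k(x+sw/k)\cdot w\,ds.
\end{equation*}
Testing $D_k$ against a smooth product $\varphi(x)\eta(w)$, the weak convergence $\nabla u_k\weak\xi$ in $L^p$ combined with translation-continuity of $\varphi$ (applied after a Fubini) yields $D_k\weak\xi\cdot w$ weakly in $L^p(B_r\times B_1;\psi(w)|w|^{-p}\,dxdw)$, the uniform $L^p$-bound following from Jensen's inequality applied to the line-integral representation of $D_k$. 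Weak lower semicontinuity of the $p$-th power of the $L^p$ norm then gives $\liminf_k F_k(u_k,B_r,B_r)\ge \int_{B_r}\int_{B_1}\psi(w)|\xi\cdot w|^p|w|^{-p}\,dw\,dx = f_0(\xi)|B_r|$, which closes the bound.

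The main technical obstacle I anticipate is this last step, namely the verification of the weak convergence of the difference quotients $D_k$ in the weighted $L^p$-space, which requires a careful commutator estimate between the small-scale translation $x\mapsto x+sw/k$ and the weak convergence of $\nabla u_k$. Once this is in hand, $g(x,\xi)=f_0(\xi)+g_0^{**}(\xi)$ is uniquely determined and, via the Urysohn argument, the full sequence $\Phi_k$ $\Gamma$-converges to $\Phi(u)=\int_\Omega(f_0(\nabla u)+g_0^{**}(\nabla u))\dx$.
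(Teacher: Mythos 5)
Your overall strategy (verify the hypotheses of Section~\ref{Hypotheses}, appeal to Theorem~\ref{thglo} to get a subsequential $\Gamma$-limit of the form $\int_\Omega g(x,\nabla u)\dx$, then identify $g$ by blow-up at affine functions) is different from the paper's, which proves the liminf and limsup inequalities directly for all $u\in W^{1,p}(\Omega)$ without invoking the abstract representation theorem; both routes are viable, and your lower-bound argument (rescaling $y=x+w/k$, weak convergence of the averaged difference quotients $D_k/|w|$ to $\xi\cdot w/|w|$, then weak lower semicontinuity of the $L^p$ norm against the weight $\psi$) is essentially the same computation the paper carries out, just organized around the blown-up density rather than a general sequence $u_k\rightharpoonup u$. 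A small slip: the correct normalization is $b_\Delta\equiv 2^{-d/2}$, not $1$, because $\int_\Delta\varphi\,d\HH^d=2^{d/2}\int_\Omega\varphi(y,y)\dy$; this does not affect the statement you are proving.

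There is, however, a genuine gap in the upper bound for the nonlocal term. From $|u_k(x)-u_k(y)-\xi\cdot(x-y)|\le 2\eta_k\|v\|_\infty$ you only get that the error in the ratio $(u_k(x)-u_k(y))/|x-y|$ is bounded by $2\eta_k\|v\|_\infty/|x-y|$, which on the support $\{|x-y|<1/k\}$ of $\psi_k$ is \emph{at least} $2\eta_k k\|v\|_\infty$, not at most; for $|x-y|\lesssim\eta_k$ this error is $O(1)$ (comparable to the Lipschitz constant of $v$), not $o(1)$, so the claim ``error of order $\eta_k k=1/k\to 0$'' is false as stated and the conclusion $F_k(u_k,B_r,B_r)\to f_0(\xi)|B_r|$ does not follow from it. The fix is the intermediate-scale argument used in the paper: introduce $\zeta_k$ with $\eta_k\ll\zeta_k\ll 1/k$ and split the region $\{|x-y|<1/k\}$ into $\{|x-y|<\zeta_k\}$, where the ratio is merely bounded (using the Lipschitz bound for $v$) but the mass $\int_{B_{k\zeta_k}}\psi\to 0$, and $\{\zeta_k\le|x-y|<1/k\}$, where the $L^\infty$ bound gives ratio error $\le 2\eta_k\|v\|_\infty/\zeta_k\to 0$; combining the two and sending $\sigma\to 1$ in the Young-type inequality closes the estimate. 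Without such a split the upper bound is not established.
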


\begin{proof} We observe that $\Phi_k$ is of the form \eqref{effe-k}, where
$$
f_k(x,y,s,t)=\psi_k(x-y){|s-t|^p\over|x-y|^p}\quad\hbox{ and } \quad g_k(x,\xi)= g_0(\xi).
$$
Note that the functions $f_k$ trivially satisfy conditions \eqref{fkgrowth}--\eqref{fkcontt}, while \eqref{convextf}--\eqref{growthbk} hold with  $\widetilde f_k(x,y,\tau)=\psi_k(x-y){|\tau|^p\over|x-y|^p}$ and $b_k(x,y)=\psi_k(x-y)$. Hence, \eqref{cibi} is satisfied with $c_b=1$, and \eqref{weakconvbk} holds with  $b_\Delta=2^{-d/2}$ and $b=0$. Moreover, the functions $g_k$ satisfy hypotheses \eqref{usual}.

To prove the $\Gamma$-liminf inequality it is enough to show that
\begin{eqnarray}\label{liminf_1}
\liminf_{k\to+\infty}\int_{\Omega\times\Omega} \psi_k(x-y){|u_k(x)-u_k(y)|^p\over|x-y|^p}\dxy \ge \int_\Omega f_0(\nabla u(x))\dx,\\ \label{liminf_2}
\liminf_{k\to+\infty}\int_{\Omega} g_0(\nabla u_k(x))\dx\ge \int_\Omega g_0^{**}(\nabla u(x))\dx\,
\end{eqnarray}
for all $u_k$ weakly converging to $u$ in $W^{1,p}(\Omega)$.

Inequality \eqref{liminf_2} follows from the inequality $g_0\ge g_0^{**}$ and the weak lower semicontinuity of the functional $v\mapsto \int_\Omega g_0^{**}(\nabla v(x))\dx$ in $W^{1,p}(\Omega)$.
Inequality \eqref{liminf_1} could be achieved by using the computation of the related $\Gamma$-limit with respect to the $L^p$ convergence in \cite{BBM,AABPT}. Here we can give a direct proof, which is considerably simpler since the functions $u_k$ weakly converge in $W^{1,p}(\Omega)$. In this case,
setting
$$
\Omega_n=\bigr\{x\in \Omega: {\rm dist}\,(x,\mathbb R^d\setminus \Omega)>\tfrac1n \bigl\},
$$
using the change of variables $z=k(x-y)$, we can write
\begin{eqnarray}\label{liminf_3}\nonumber
&\displaystyle\int_{\Omega\times\Omega} \psi_k(x-y){|u_k(x)-u_k(y)|^p\over|x-y|^p}\dxy\\ \nonumber
&\displaystyle\ge \int_{\Omega_k}\int_{B_1(0)} \psi(z)k^p{|u_k(x+{1\over k}z)-u_k(x)|^p\over |z|^p}dz\dx\\ &\displaystyle\nonumber
=\int_{\Omega_k}\int_{B_1(0)} \psi(z)\Bigl|\Bigl(\int_0^1\nabla u_k(x+\tfrac{t}kz)\,dt\Bigr)\cdot {z\over |z|}\Bigr|^pdz\dx
\\ &\displaystyle
=\int_{B_1(0)}\psi(z) \int_{\Omega_k}\Bigl|\Bigl(\int_0^1\nabla u_k(x+\tfrac{t}kz)\,dt\Bigr)\cdot {z\over |z|}\Bigr|^pdx\,dz.
\end{eqnarray}
Since for all $z\in B_1(0)$ and for all $\Omega'\subset\subset\Omega$
$$
\int_0^1\nabla u_k(x+\tfrac{t}kz)\,dt \rightharpoonup \nabla u(x)
$$
weakly in $L^p(\Omega';\mathbb R^d)$ with respect to the variable $x$, also using Fatou's Lemma we obtain
\begin{eqnarray*}
\liminf_{k\to+\infty}\int_{B_1(0)}\psi(z) \int_{\Omega_k}\Bigl|\Bigl(\int_0^1\nabla u_k(x+\tfrac{t}kz)\,dt\Bigr)\cdot {z\over |z|}\Bigr|^pdx\,dz\\
\ge \int_{B_1(0)}\psi(z) \int_{\Omega'}\Bigl|\nabla u(x)\cdot {z\over |z|}\Bigr|^pdx\,dz
= \int_{\Omega'} f_0(\nabla u(x))\dx.
\end{eqnarray*}
Letting $\Omega'$ tend to $\Omega$, from this inequality and \eqref{liminf_3} we obtain \eqref{liminf_1}.

We now show the limsup inequality; that is, that for all $u\in W^{1,p}(\Omega)$ and $\eta>0$ there exist $u_k$ weakly converging to $u$ in $W^{1,p}(\Omega)$ such that
\begin{eqnarray}\label{limsup_1}
\limsup_{k\to+\infty}\int_{\Omega\times\Omega} \psi_k(x-y){|u_k(x)-u_k(y)|^p\over|x-y|^p}\dxy \le \int_\Omega f_0(\nabla u(x))\dx +\eta,\\ \label{limsup_2}
\limsup_{k\to+\infty}\int_{\Omega} g_0(\nabla u_k(x))\dx\le \int_\Omega g_0^{**}(\nabla u(x))\dx+\eta.
\end{eqnarray}

We start with the simplest case $u(x)=\xi\cdot x$. We note that
$$
g^{**}_0(\xi)=\inf\Bigl\{\int_{(0,1)^d} g_0(\xi+\nabla v(y))\dy : v\in C^\infty_c((0,1)^d)\Bigr\}
$$
(see e.g.~\cite[Section 6.2]{BDF}). Given $\eta>0$ we fix $v\in C^\infty_c((0,1)^d)$ such that
\begin{equation}\label{def-v}
\int_{(0,1)^d} g_0(\xi+\nabla v(y))\dy\le g^{**}_0(\xi)+\eta.
\end{equation}

For every $\e>0$ we define $u_\e(x)= \xi\cdot x+ \e v\bigl({x\over\e}\bigr)$. Note that 
\begin{equation}\label{limsup-13}
\limsup_{\e\to 0} \int_\Omega g_0(\nabla u_\e(x))\dx\le (g^{**}_0(\xi)+\eta)\mathcal L^d(\Omega)
\end{equation}
(see e.g.~\cite[Section 2.1]{BDF}), so that \eqref{limsup_2} holds with $u_k=u_{\e_k}$ for any sequence $\e_k\to0$. If we choose $\e_k<\!<{1\over k}$, then such a sequence also satisfies \eqref{limsup_1}. To check this, we introduce a second parameter $\zeta_k$ with 
\begin{equation}\label{doublein}
\e_k<\!<\zeta_k<\!<{1\over k}.
\end{equation}
Using the Lipschitz continuity of $v$ we infer that there exists a constant $L$ such that
$$
\Bigl|\xi\cdot {x-y\over|x-y|}+ \e_k {v\bigl({x\over\e_k}\bigr) -
v\bigl({y\over\e_k}\bigr)\over|x-y|}\Bigr|\le L.
$$
Therefore, for all $\sigma\in(0,1)$, letting $\tau=1-\sigma$, we can write
\begin{eqnarray*} &\displaystyle
\int_{\Omega\times\Omega} \psi_k(x-y){|u_{\e_k}(x)-u_{\e_k}(y)|^p\over|x-y|^p}\dxy\\
&\displaystyle=
\int_{\Omega\times\Omega} \psi_k(x-y)\Bigl|\xi\cdot {x-y\over|x-y|}+ \e_k {v\bigl({x\over\e_k}\bigr) -
v\bigl({y\over\e_k}\bigr)\over|x-y|}\Bigr|^p\dxy\\
&\displaystyle\le 
\int_{\Delta_{\zeta_k}} \hskip-.3cm\psi_k(x-y)L^p\dxy+\int_{(\Omega\times\Omega)\setminus\Delta_{\zeta_k}} \hskip-1.2cm\psi_k(x-y)\Bigl({1\over \sigma^{p-1}}\Bigl|\xi\cdot {x-y\over|x-y|}\Bigr|^p+ {2^p\e_k^p\over\tau^{p-1}}{ \|v\|^p_\infty\over\zeta_k^p}\Bigr)\dxy\\
&\displaystyle\le 
L^p{\mathcal L}^d(\Omega)\int_{B_{k\zeta_k}(0)}\hskip-.5cm \psi(z)dz+{1\over \sigma^{p-1}}\int_{\Omega} \int_{B_1(0)} \hskip-.3cm\psi(z)\Bigl|\xi\cdot {z\over|z|}\Bigr|^pdz\dx
+ {\e_k^p\over \zeta_k^p}{2^p \|v\|^p_\infty\over\tau^{p-1}}({\mathcal L}^d(\Omega))^2.
\end{eqnarray*}
Letting $k\to+\infty$ and using \eqref{doublein}, we obtain 
$$
\limsup_{k\to+\infty}\int_{\Omega\times\Omega} \psi_k(x-y){|u_{\e_k}(x)-u_{\e_k}(y)|^p\over|x-y|^p}\dxy
\le {1\over \sigma^{p-1}}\int_{\Omega}f_0( \xi){\mathcal L}^d(\Omega),
$$
which proves \eqref{limsup_1} by letting $\sigma\to1$.

If $u$ is a piecewise-affine function in $\mathbb R^d$, then $u$ is Lipschitz continuous and there exist a finite number of open simplexes $\Omega_j$ such that $\Omega\subset\bigcup_j \Omega_j$ up to a null set, such that 
$$
u(x)= \xi_j\cdot x+c_j \hbox{ for every } x\in\Omega_j.
$$
We then choose $\e_k<\!<{1\over k}$ as above and define
$$
\Omega^k_j=\bigcup\Bigl\{ \e_k i+(0,\e_k)^d: i\in\mathbb Z^d, {\rm dist} (\e_k i, \mathbb R^d\setminus \Omega_j)>{2\over k}\Bigr\},
$$
and for all $j$ set
$$
u_k(x)=\begin{cases}
u(x)+ \e_k v_j\bigl({x\over\e_j}\bigr) &\hbox{ if } x\in \Omega^k_j\\
u(x) & \hbox{ if } x\in\Omega_j\setminus \Omega^k_j,
\end{cases}
$$
where $v_j$ is defined as in \eqref{def-v} with $\xi_j$ in the place of $\xi$. 
We can then repeat the previous computations in order to prove \eqref{limsup_1} and \eqref{limsup_2}. 
The proof of the latter is simply obtained arguing as above in the sets $\Omega^k_j$ and using the Lipschitz continuity of $u$ in the remaining part of $\Omega$.

As for \eqref{limsup_1}, we subdivide $\Omega\times\Omega$ in sets of the form $\Omega_i\times \Omega_j$.
If $i=j$ the computation is exactly the same as in the first part of the proof since therein only the Lipschitz constant of $v_i$ and its $L^\infty$ norm are used. If $i\neq j$ the energy is estimated as
\begin{eqnarray*} &\displaystyle
\int_{\Omega_i\times \Omega_{j}}\hskip-.5cm \psi_k(x-y){|u(x)-u(y)|^p\over|x-y|^p}\dxy
\le L^p\mathcal L^d \bigl(\bigl\{x\in \Omega_i: {\rm dist} (x, \mathbb R^d\setminus \Omega_i)<\tfrac1k\bigr\}\bigr)\int_{B_1(0)}\hskip-.3cm\psi(z)dz,
\end{eqnarray*}
which is infinitesimal as $k\to+\infty$.

Finally, a standard argument using the density of piecewise-affine functions in $W^{1,p}(\Omega)$ allows us to conclude the limsup inequality.
\end{proof}

\subsection{Interaction with homogenization}\label{sechom}
We now consider an example of concentration of the non-local term in the presence of an underlying periodic geometry. We suppose that both in the local and non-local terms have an oscillating behaviour, with  a periodic dependence of the energy densities on the space variables. 
If the corresponding period and the scale of concentration are the same, 
optimal sequences show an interaction between oscillation and concentration, leading to a combined homogenization of the local and non-local terms. The resulting formula for the limit integrand generalizes both the classical homogenization formula for integral functionals \cite{DM,BDF} and that for convolution-type energies \cite{AABPT,BP}.
\smallskip

Let $f_0:\mathbb R^d\times \mathbb R^d\times \mathbb R^d\times \mathbb R\to[0,+\infty)$ be a Carath\'eodory function with the following properties.

(Periodicity) For all $z\in \mathbb R^d$ and $\tau\in \mathbb R$ we have
\begin{equation}
f_0(x+e_j,y+e_j,z,\tau)=f_0(x,y,z,\tau)
\end{equation}
for all $j\in\{1,\ldots, d\}$, where $e_j$ are the vectors of the canonical orthonormal basis of $\mathbb R^d$.

(Growth) There exist $\psi$ as in Section \ref{sepsca} and $C_0,C_1>0$ such that
\begin{equation}
C_0 \psi(z){\tau^p\over|z|^p}\le f_0(x,y,z,\tau)\le C_1 \psi(z){\tau^p\over|z|^p}
\end{equation}
for all  $x,y,z\in \mathbb R^d$ and $\tau\in \mathbb R$.

\smallskip
Let $g_0:\mathbb R^d\times \mathbb R\to[0,+\infty)$ be a Carath\'eodory function with the following properties.

(Periodicity) For all $x,\xi\in \mathbb R^d$ we have
\begin{equation}
g_0(x+e_j,\xi)=g_0(x,\xi) 
\end{equation}
for all $j\in\{1,\ldots, d\}$.

(Growth) There exist $c_0,c_1>0$  and $a_0\ge 0$ such that
\begin{equation}
c_0 |\xi|^p\le g_0(x,\xi)\le c_1 |\xi|^p+a_0
\end{equation}
for all  $x,y,z\in \mathbb R^d$ and $\tau\in \mathbb R$

\begin{theorem} The functionals $\Phi_\e$ defined on $W^{1,p}(\Omega)$ by
$$
\Phi_\e(u)=
{1\over\e^d} \int_{\Omega\times \Omega} f_0\Bigl({x\over\e}, {y\over\e}, {x-y\over\e},{u(x)-u(y)\over\e}\Bigr)\dxy+ \int_\Omega g_0\Bigl({x\over\e},\nabla u(x)\Bigl)\dx
$$
 $\Gamma$-converge as $\e\to0$, with respect to the weak convergence in $W^{1,p}(\Omega)$, to the functional $\Phi_{\rm hom}$ defined by
$$
\Phi_{\rm hom}(u)= \int_\Omega g_{\rm hom}(\nabla u(x))\dx,
$$
where $g_{\hom}:\mathbb R^d\to[0,+\infty)$ is given by the formula
\begin{eqnarray}\label{homform}\nonumber
&&\hskip-.7cm g_{\hom}(\xi)=\lim_{T\to+\infty} {1\over T^d} \min\Bigl\{\int_{Q_T\times Q_T} f_0(x,y,x-y,u(x)-u(y))\dxy + \int_{Q_T}g_0(x,\nabla u(x))\dx :\\
&&\hskip3cm u(x)-\xi\cdot x\in W^{1,p}_0(Q_T) \Bigl\},
\end{eqnarray}
where $Q_T=(0,T)^d$.
\end{theorem}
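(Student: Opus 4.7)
The plan is to apply the integral‑representation Theorem~\ref{thglo} to the rescaled sequence $\Phi_\e$, identify the $\Gamma$‑limit as a purely local functional by exploiting the compact support of $\psi$, and then identify its density with $g_{\hom}$ by unfolding the minimum problem on a cube and invoking the convergence of minima from Corollary~\ref{corcon}.

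First I would write $\Phi_\e$ in the form \eqref{effe-k} by setting $f_\e(x,y,s,t)=\e^{-d}\,f_0(x/\e,y/\e,(x-y)/\e,(s-t)/\e)$ and $g_\e(x,\xi)=g_0(x/\e,\xi)$, and verify the hypotheses of Section~\ref{Hypotheses}. The growth on $f_0$ allows the choice $\widetilde f_\e(x,y,\tau)=C_1\,b_\e(x,y)|\tau|^p/|x-y|^p$ with $b_\e(x,y)=\e^{-d}\psi((x-y)/\e)$; since $\psi$ vanishes off $B_1(0)$ the kernel $b_\e$ is supported in $\Delta_\e$. Thus \eqref{fkgrowth}--\eqref{fkcontt} hold trivially on $(\Omega\times\Omega)\setminus\Delta_\delta$ as soon as $\e<\delta$, the uniform bound $\int b_\e(\cdot,y)\dx\le\|\psi\|_{L^1}=1$ gives \eqref{cibi}, and a direct change of variables yields $b_\e\mathcal L^{2d}\weakstar 2^{-d/2}\HH^d\LLL\Delta$, so \eqref{weakconvbk} holds with $b\equiv 0$. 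The invariance \eqref{invariance} is automatic because $f_0$ depends only on $s-t$, while \eqref{usual} follows from the growth of $g_0$. Theorem~\ref{thglo} then produces, along a subsequence, a $\Gamma$‑limit of the form $\Phi^*(u)=F(u,\Omega,\Omega)+\int_\Omega g(x,\nabla u)\dx$. Since $f_\e\equiv 0$ outside $\Delta_\e$, the weak limits of Lemma~\ref{effelim} vanish on $(\Omega\times\Omega)\setminus\Delta_\delta$ for every $\delta>0$, so $f\equiv 0$ and $F\equiv 0$. Next, the periodicity of $f_0$ and $g_0$ allows one to show $g(x,\xi)$ is independent of $x$: given $v\in\mathbb R^d$, pick $z_\e\in\mathbb Z^d$ with $\e z_\e\to v$; then $\Phi_\e(u(\cdot-\e z_\e),A)=\Phi_\e(u,A-\e z_\e)$ whenever the translation stays inside $\Omega$, and passing to the $\Gamma$‑limit yields $\Phi^*(u(\cdot-v),A)=\Phi^*(u,A-v)$, forcing $x$‑independence at Lebesgue points. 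Set $g(\xi):=g(\cdot,\xi)$; this $g$ is convex by Theorem~\ref{mainth}.

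The core step is the identification $g(\xi)=g_{\hom}(\xi)$. I would first verify existence of the limit in \eqref{homform} by an almost‑subadditivity argument: tiling $Q_T$ by sub‑cubes of side $T'$ (with $T=nT'$, $n\in\mathbb N$) and gluing the almost‑optimal cell minimisers via the cut‑off construction of Lemma~\ref{lemmasub-0} yields $M_T(\xi)/T^d\le M_{T'}(\xi)/T'^d+o_{T'}(1)$, where $M_T(\xi)$ denotes the cell infimum in \eqref{homform}; the $O(T^{d-1})$ boundary‑layer contribution is controlled uniformly by \eqref{cibi} and the growth of $g_0$. Next, fix a cube $Q_R(x_0)\subset\subset\Omega$ and use the rescaling $u(x)=\e v(x/\e)$, so that $\nabla u(x)=\nabla v(x/\e)$ and $(u(x)-u(y))/\e=v(x/\e)-v(y/\e)$. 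A direct change of variables gives
\[
\Phi_\e(u,Q_R,Q_R)=\e^d\Bigl(\int_{Q_{R/\e}\times Q_{R/\e}}\hskip-1cm f_0(x,y,x-y,v(x)-v(y))\dxy+\int_{Q_{R/\e}}\hskip-.2cm g_0(x,\nabla v)\dx\Bigr),
\]
so taking the infimum subject to $u=\xi\cdot x$ on a neighbourhood of $\partial Q_R$ (equivalently, $v=\xi\cdot y$ near $\partial Q_{R/\e}$, which kills the non‑local pairs straddling $\partial Q_R$ thanks to the compact support of $\psi$) produces a minimum equal to $\e^d M_{R/\e}(\xi)$, which tends to $R^d g_{\hom}(\xi)$ as $\e\to 0$. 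On the other hand, by Corollary~\ref{corcon} (applied on $Q_R$ with $w=\xi\cdot x$, which is itself a minimiser of $\Phi^*$ by convexity of $g$), the same minimum converges to $R^d g(\xi)$. Equating the two limits yields $g(\xi)=g_{\hom}(\xi)$, and Urysohn's property of $\Gamma$‑convergence removes the subsequence extraction.

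The main obstacle will be the simultaneous treatment of the local and non‑local energies in the almost‑subadditivity step defining $g_{\hom}$: gluing two cell minimisers with a cut‑off produces both a local gradient error (from the cut‑off) and a non‑local cross‑interaction term, and one must control both uniformly by an $O(T^{d-1})$ contribution. The estimate \eqref{cibi} and the compact support of $\psi$ are precisely what is needed here, exactly as in Lemma~\ref{lemmasub-0}. A secondary subtle point is that the unfolding of the minimum problem must use boundary data prescribed in a small \emph{neighbourhood} of $\partial Q_R$ (the variant allowed by the last statement of Corollary~\ref{corcon}), otherwise non‑local pairs with only one endpoint in $Q_R$ would produce spurious contributions that are not accounted for in the cell formula \eqref{homform}.
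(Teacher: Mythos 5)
Your overall route matches the paper's: verify the hypotheses, invoke Theorem~\ref{thglo} along a subsequence to obtain a purely local limit $\int_A g(x,\nabla u)\dx$ with $F\equiv 0$, reduce to $x$-independence and convexity of $g$, and identify $g(\xi)$ with the cell formula via Corollary~\ref{corcon} and a change of variables $T_k=1/\e_k$. Where you depart from the paper is in how the two external ingredients are supplied: the paper cites \cite[Propositions 6.1 and 6.2]{AABPT} for the $x$-independence of $g$ and for the existence of the limit in \eqref{homform}, while you give self-contained arguments (a translation argument using the periodicity, and an almost-subadditivity argument). This is a legitimate and somewhat more transparent alternative, but it shifts the burden onto you to carry out those arguments carefully, and there the proposal has gaps.

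The main gap is the conflation of two distinct cell problems. You define $M_T(\xi)$ as the infimum in \eqref{homform}, i.e.\ with data $u-\xi\cdot x\in W^{1,p}_0(Q_T)$, but both your almost-subadditivity step and your unfolding step really use the \emph{thin-layer} problem, where $u=\xi\cdot x$ on a unit-width collar near $\partial Q_T$: the gluing of sub-cube minimisers is only clean when the competitors are affine on a layer of fixed width (so that the non-local cross-terms between adjacent tiles can be computed explicitly), and, as you yourself note, the rescaled version of \eqref{conv-min-0} is the thin-layer problem, not $M_{R/\e}(\xi)$. The paper resolves exactly this mismatch by deriving \emph{both} \eqref{concon} and \eqref{concon-2} and sandwiching the two normalized minima along $T_k$; without such a comparison step your identity ``the minimum equals $\e^d M_{R/\e}(\xi)$'' is false as written. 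Two secondary points: Lemma~\ref{lemmasub-0} joins recovery sequences for $\Phi^\delta_k$ at fixed $k$ and is not directly a gluing lemma for the (unrescaled, $k$-independent) cell functional $\int_{Q_T\times Q_T}f_0+\int_{Q_T}g_0$; you need a dedicated estimate there, even if it uses the same ideas. And the rescaling on $Q_R(x_0)$ with $x_0\neq 0$ introduces a shift $x_0/\e$ in the periodic coefficients; the paper sidesteps this by choosing $Q=(0,1)^d$ so that the rescaled cube is exactly $(0,T_k)^d$.
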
 

\begin{proof}  To prove the theorem we fix an infinitesimal sequence of parameters $\e_k>0$ and show that the corresponding $\Phi_{\e_k}$ $\Gamma$-converge to $\Phi_{\rm hom}$.

In order to apply the results of the previous sections, we define
\begin{equation}
f_k (x,y,s,t)={1\over \e_k^d} f_0\Bigl({x\over\e_k}, {y\over\e_k}, {x-y\over\e_k},{s-t\over\e_k}\Bigr)
\end{equation}
for all  $x,y\in \mathbb R^d$ and $s,t\in \mathbb R$.
Note that $f_k$ trivially satisfies \eqref{fkgrowth}--\eqref{fkcontt} and \eqref{invariance}, while \eqref{convextf}--\eqref{growthbk} are satisfied taking $\widetilde f_k(x,y,\tau)= {b_k(x,y)\over |x-y|^p} |\tau|^p$, where
$$
b_k(x,y)={C_1\over\e_k^d}\psi\Bigl({x-y\over\e_k}\Bigr).
$$
We also define
\begin{equation}
g_k (x,\xi)=g_0\Bigl({x\over\e_k},\xi\Bigr)
\end{equation}
for all  $x,\xi \in \mathbb R^d$. Note that $g_k$ satisfies hypothesis \eqref{usual} with $a(x)=a_0$.

By \eqref{effek} and  \eqref{defGk} the functionals $\Phi_k$ defined in \eqref{ficappa} satisfy
\begin{equation}
\Phi_k(u,A,A)= {1\over\e_k^d} \int_{A\times A} f_0\Bigl({x\over\e_k}, {y\over\e_k}, {x-y\over\e_k},{u(x)-u(y)\over\e_k}\Bigr)\dxy+ \int_A g_0\Bigl({x\over\e_k},\nabla u(x)\Bigl)\dx
\end{equation} 
for all $A\in\AA$.

First note that the function $f$ in Lemma \ref{effelim} is identically $0$. Hence, by Theorem \ref{thglo} there exist a subsequence (still denoted by $\e_k$) and a function $g$ satisfying conditions \eqref{usual} such that for all $A\in\AA$ the sequence $\Phi_k(\cdot,A,A)$ $\Gamma$-converges to the functional $G(\cdot,A)$ given by
\begin{equation}
G(u,A)=\int_A g(x,\nabla u)\dx.
\end{equation}
Thanks to the Urysohn property of $\Gamma$-convergence, is it sufficient to show that $g(x,\xi)= g_{\rm hom} (\xi)$ for almost all $x$ and all $\xi\in\mathbb R^d$. 

Note that, using the same argument as in \cite[Proposition 6.1]{AABPT} we obtain that $g(x,\xi)$ is independent of $x$; that is, $g(x,\xi)=g(\xi)$. Since $G(\cdot,A)$ is lower semicontinuous with respect to the weak convergence in $W^{1,p}(\Omega)$ the function $g$ is convex, so that
$g(\xi)\le \int_Q g(\xi+\nabla u(x))\dx$  for all $u$ with $u(x)-\xi\cdot x\in W^{1,p}_0(Q)$ and $Q:=(0,1)^d$.
Since it is not restrictive to suppose that $Q\subset \Omega$ we can write
\begin{equation}\label{charcon}\nonumber
g(\xi)=\min\{ G(u,Q): u(x)-\xi\cdot x\in W^{1,p}_0(Q) \}.
\end{equation}
By Corollary \ref{corcon} we then have \begin{eqnarray}\label{concon}&&
g(\xi)=\lim_{k\to+\infty} 
\inf\{ \Phi_k(u,Q,Q): u(x)-\xi\cdot x\in W^{1,p}_0(Q) \}.
\end{eqnarray}

Let $T_k={1\over\e_k}$. By a change of variables and setting $U(x)={1\over\e_k} u(\e_k x)$ we have 
\begin{eqnarray*}
&&{1\over\e_k^d} \int_{Q\times Q} f_0\Bigl({x\over\e_k}, {y\over\e_k}, {x-y\over\e_k},{u(x)-u(y)\over\e_k}\Bigr)\dxy
\\
&=&{1\over T_k^d}\int_{Q_{T_k}\times Q_{T_k}} f_0 (x,y, x-y,U(x)-U(y))\dxy,
\end{eqnarray*}
and 
$$\int_Q g_0\Bigl({x\over\e_k},\nabla u(x)\Bigl)\dx\\
={1\over T_k^d}\int_{Q_T}g_0(x,\nabla U(x))\dx,
$$
so that we obtain
\begin{eqnarray*}
&&\inf\{ \Phi_k(u,Q,Q): u(x)-\xi\cdot x\in W^{1,p}_0(Q) \}\\
&&={1\over T_k^d} \min\Bigl\{\int_{Q_{T_k}\times Q_{T_k}} f_0(x,y,x-y,u(x)-u(y))\dxy + \int_{Q_{T_k}}g(x,\nabla u(x))\dx :\\
&& \hskip3cm u\in W^{1,p}(Q_{T_k}), u(x)-\xi\cdot x\in W^{1,p}_0(Q_{T_k})\Bigl\}.
\end{eqnarray*}
From this equality and \eqref{concon} we obtain the claim if we prove that this limit is independent of $\e_k$.

By \eqref{conv-min-0} and the change of variables above  we also have 
\begin{eqnarray}\label{concon-2}\nonumber
g(\xi)&=&\lim_{k\to+\infty} 
\inf\{ \Phi_k(u,Q,Q): u(x)-\xi\cdot x \hbox{ if dist}(x,\partial Q)\le\e_k \}\\ \nonumber
&=&{1\over T_k^d} \min\Bigl\{\int_{Q_{T_k}\times Q_{T_k}} f_0(x,y,x-y,u(x)-u(y))\dxy + \int_{Q_{T_k}}g(x,\nabla u(x))\dx :\\
&& \hskip3cm u\in W^{1,p}(Q_{T_k}), u(x)=\xi\cdot x \hbox{ \rm if dist}(x, \partial Q_{T_k})\le 1\Bigl\}.
\end{eqnarray}
The existence of the limit
\begin{eqnarray}\label{homform-2}\nonumber
&&\lim_{T\to+\infty} {1\over T^d} \min\Bigl\{\int_{Q_T\times Q_T} f_0(x,y,x-y,u(x)-u(y))\dxy + \int_{Q_T}g(x,\nabla u(x))\dx :\\
&& \hskip4cm u\in W^{1,p}(Q_T), u(x)=\xi\cdot x \hbox{ \rm if dist}(x, \partial Q_T)\le 1\Bigl\}
\end{eqnarray}
 is proved in \cite[Proposition 6.2]{AABPT} when $g_0$ is not present and the function $f$ therein is given by $f(x,z,\tau)= f_0(x,x-z,z,\tau)$. The same arguments can be used in the general case considered here. 
 This shows that also the limit in \eqref{homform} exists and concludes the proof.
\end{proof} 

\bigskip

\noindent \textsc{Acknowledgements.}
 This paper is based on work supported by the National Research Project (PRIN  2017BTM7SN) 
 "Variational Methods for Stationary and Evolution Problems with Singularities and 
 Interfaces", funded by the Italian Ministry of University and Research. 
The authors are members of the Gruppo Nazionale per 
l'Analisi Matematica, la Probabilit\`a e le loro Applicazioni (GNAMPA) of the 
Istituto Nazionale di Alta Matematica (INdAM).

\end{document}